\theoremstyle{plain}
\newtheorem{theo}{Theorem}[section]
\newtheorem{prop}[theo]{Proposition}
\newtheorem{lem}[theo]{Lemma}
\newtheorem{coro}[theo]{Corollary}
 \theoremstyle{definition}
 \newtheorem{ex}[theo]{Example}
 \theoremstyle{remark}
\numberwithin{equation}{section}
\newcommand{\integers}{\mathbb{Z}}
\DeclareMathOperator{\supp}{supp} 
\DeclareMathOperator{\gr}{grad}
\DeclareMathOperator{\Rees}{Rees}
\begin{document}

\title[Free group algebras]{Free group algebras in division rings with valuation II}
\author{Javier S\'anchez}
\address{Department of Mathematics - IME, University of S\~ao Paulo,
Rua do Mat\~ao 1010, S\~ao Paulo, SP, 05508-090, Brazil}
\email{jsanchez@ime.usp.br}
\thanks{Supported by FAPESP-Brazil, Proj. Tem\'atico 2015/09162-9, by 
 Grant CNPq 307638/2015-4 and
by MINECO-FEDER (Spain) 
through project numbers MTM2014-53644-P and MTM2017-83487-P}

\date{\today}

\subjclass[2010]{16K40, 16W70, 16W60, 	16W10,   16S10, 16S30, 16U20, 6F15}

\keywords{Division rings, filtered rings, graded rings, valuations, rings with involution, free group algebra, universal enveloping algebra, ordered groups}

\begin{abstract}
We apply the filtered and graded methods developed in earlier works to find (noncommutative)
free group algebras in division rings. 

If $L$ is a Lie algebra, we denote by $U(L)$ its universal enveloping algebra.  
P. M. Cohn constructed a division ring $\mathfrak{D}_L$ that contains $U(L)$. We denote
by $\mathfrak{D}(L)$ the division subring of $\mathfrak{D}_L$ generated by $U(L)$. 

Let $k$ be a field of characteristic zero and 
 $L$ be a nonabelian Lie $k$-algebra.
If either $L$ is residually nilpotent or $U(L)$ is an Ore domain, we show
that $\mathfrak{D}(L)$ contains (noncommutative) free group algebras. 
In those same cases, if $L$ is equipped with an involution, we are able to prove
that  
the free group algebra in $\mathfrak{D}(L)$ can be chosen generated by symmetric elements
in most cases.

Let $G$ be a nonabelian residually torsion-free nilpotent group and $k(G)$
be the division subring of the Malcev-Neumann series ring generated  
by the group algebra $k[G]$.
If $G$ is equipped with an involution, we show that $k(G)$ contains a (noncommutative) free group
algebra generated by symmetric elements.
\end{abstract}

\maketitle

\setcounter{tocdepth}{1}
\tableofcontents


\section{Introduction}

The search for free objects in division rings has been largely motivated by the
following two conjectures that still remain open:

\begin{itemize}
\item[(G)] If $D$ is a noncommutative division ring, then the 
multiplicative group $D\setminus\{0\}$ contains a free group of rank two;
\item[(A)] If $D$ is a division ring which is infinite dimensional over its center $Z$
and it is finitely generated
(as a division algebra  over $Z$), then $D$ contains a free $Z$-algebra of rank two. 
\end{itemize}
Conjecture (G) was stated by A. I. Lichtman in \cite{Lichtmanfreesubgroupsof} and it has been proved
when the center of $D$ is uncountable \cite{chibafreegroupsinsidedivisionrings} and
when $D$ is finite dimensional over its center \cite{Goncalvesfreegroupsinsubnormal} to name
two important instances where it holds true.
Conjecture (A) was formulated independently
by L. Makar-Limanov in \cite{Makaronfreesubobjects} and T. Stafford. Evidence for conjecture (A)
has been provided in many papers, for example \cite{Makar1}, \cite{Makar2}, \cite{Lichtmanfreeuniversalenveloping},
\cite{BellRogalskifreesubalgebrasoreextensions},  \cite{BellGoncalvesOreexentions}.
 In many division rings in which conjecture (A) holds, $D$ in fact contains a
noncommutative free group $Z$-algebra.  For example, this always happens
if the center of $D$ is uncountable~\cite{GoncalvesShirvani} (or \cite{SanchezObtaininggraded}
for a slightly more general result).
Other examples of the 
existence of free group algebras in division rings
can be found in \cite{Cauchoncorps}, \cite{Makar-LimanovOnsubalgebrasofthe}, 
\cite{Lichtmanfreeuniversalenveloping},
\cite{SanchezfreegroupalgebraMNseries}.
Therefore it makes sense to consider the following unifying
conjecture:
\begin{itemize}
\item[(GA)] Let $D$ be a skew field with center $Z$.
If $D$ is  finitely generated as a division ring over $Z$ and $D$ is
infinite dimensional over $Z$, then $D$ contains a noncommutative
free group $Z$-algebra.
\end{itemize}
For more details on these and related conjectures the reader is
referred to \cite{GoncalvesShirvaniSurvey}.

\medskip

After the work in \cite{GoncalvesShirvaniSurvey},  
\cite{FerreiraGoncalvesHeisenberggroup}, \cite{FerreiraGoncalvesSanchezFreegroupssymmetric},
\cite{FerreiraFornaroliGoncalvesFreeinvolution},
\cite{FerreiraGoncalvesSanchez1}, \cite{FerreiraGoncalvesSanchez2} it has become apparent that
an involutional version of conjectures (G) and (A) should be investigated. Part of our work
deals with an involutional version of (GA).
To be more specific, if
$D$ is equipped with an involution,
under the hypothesis of (GA), can we find a free group algebra whose free
set of generators is formed by symmetric elements (i.e. $x^*=x$)?

\medskip

Let $k$ be a field. A \emph{$k$-involution} on a $k$-algebra $R$ is a
$k$-linear map $*\colon R\rightarrow R$, $x\mapsto x^*$, such that
$(ab)^*=b^*a^*$ for all $a,b\in R$. There are two families of $k$-algebras that usually are equipped with an involution.
These are group $k$-algebras and universal enveloping algebras of Lie $k$-algebras. 
Given an involution on a group (see p.\pageref{involutiongroup} for precise a definition),
it induces a $k$-involution on the
group $k$-algebra $k[G]$ (p.\pageref{involutiongroup}).  Furthermore, if $G$ is an orderable group 
(p.\pageref{orderablegroup}), there is a prescribed
construction of a division $k$-algebra, which we call $k(G)$, that contains $k[G]$,  it  
is generated by $k[G]$ and such that any $k$-involution on $k[G]$ can be
extended to $k(G)$ (see Section~\ref{sec:Heisenberggroup} for more details).
Also, given a $k$-involution (see p.\pageref{kinvolutionLie}) of a Lie $k$-algebra $L$,
it induces a $k$-involution on  the universal enveloping algebra $U(L)$ in the natural way
(\cite[Section~2.2.17]{Dixmierenvelopingalgebras}).  There is also a concrete construction
of a division $k$-algebra, which we denote by $\mathfrak{D}(L)$. It contains
$U(L)$, it is generated by $U(L)$ and such that any
$k$-involution on $L$ can be extended to a $k$-involution of $\mathfrak{D}(L)$ (see Section~\ref{sec:residuallynilpotent}
for more details).
 We remark that neither $k[G]$ nor $U(L)$ need to be Ore domains, but
if they are, both $k(G)$ and $\mathfrak{D}(L)$ coincide with the Ore ring of fractions
of $k[G]$ and $U(L)$ respectively.

\medskip

The aim of our work is to apply the graded and filtered methods developed in \cite{SanchezObtaininggraded}
and \cite{SanchezObtainingI} to obtain free group algebras in division rings.
Concerning conjecture (GA), we are able to prove an extension of
a result by Lichtman. 
More precisely, \cite[Theorem~4]{Lichtmanfreeuniversalenveloping} is
(2) of the following result.

\begin{theo}\label{theo:Lichtman}
Let $k$ be a field of characteristic zero and $L$ be a nonabelian Lie $k$-algebra.
If one of the following conditions is satisfied 
\begin{enumerate}[\rm(1)]
\item $L$ is residually nilpotent; 
\item The universal enveloping algebra $U(L)$
is an Ore domain; 
\end{enumerate}
then $\mathfrak{D}(L)$ contains a (noncommutative) free group
$k$-algebra. \qed 
\end{theo}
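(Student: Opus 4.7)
The plan is to split the proof into the two listed cases. Case (2) is precisely Lichtman's theorem \cite{Lichtmanfreeuniversalenveloping}, so nothing new is required there; all the content lies in case (1). The strategy for (1) is to reduce it to (2) by filtering $U(L)$ with the lower central series of $L$ and then invoking the filtered-to-graded lifting machinery developed in \cite{SanchezObtaininggraded} and \cite{SanchezObtainingI}.

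Write $L = L_1 \supseteq L_2 \supseteq \cdots$ for the lower central series, so that residual nilpotence is exactly the condition $\bigcap_{n} L_n = 0$. This induces a separated descending filtration on $U(L)$, and a Quillen-type identification gives $\gr U(L) \cong U(\gr L)$, where $\gr L$ is the associated graded Lie algebra. Because $L$ is nonabelian, one may pick $a,b\in L$ with $[a,b]\ne 0$; choosing them in the smallest filtration steps where their images are nonzero, the homogeneous classes $\bar a,\bar b\in \gr L$ still satisfy $[\bar a,\bar b]\ne 0$, so $\gr L$ contains a nonabelian graded Lie subalgebra $M$. One may arrange $M$ to be a finitely generated nonabelian nilpotent Lie algebra (passing to a suitable nilpotent quotient of the subalgebra generated by $\bar a,\bar b$ if necessary), in which case $U(M)$ is an Ore domain and case (2) applied to $M$ already produces a noncommutative free group $k$-algebra inside $\mathfrak{D}(M)$.

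To transfer this free group algebra back to $\mathfrak{D}(L)$, the lower-central-series filtration on $U(L)$ extends to a valuation on $\mathfrak{D}(L)$ whose associated graded division ring naturally contains $\mathfrak{D}(M)$. The lifting criterion from \cite{SanchezObtaininggraded} and \cite{SanchezObtainingI} then guarantees that arbitrary lifts of the free generators from the graded level, chosen with the correct valuation values, generate a noncommutative free group $k$-algebra in $\mathfrak{D}(L)$ itself. The main obstacle, and the reason these earlier papers are indispensable, is verifying that the Cohn construction of $\mathfrak{D}(L)$ is compatible with the lower-central-series filtration to the extent that its graded data really does contain $\mathfrak{D}(M)$ as expected; once this compatibility is established the argument is essentially a dictionary translation from the graded to the filtered setting.
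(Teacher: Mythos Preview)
Your proposal reverses the logical direction of the paper's argument, and the reversal introduces a genuine gap that you yourself flag but do not close.

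In the paper, the residually nilpotent case (1) is proved \emph{first and directly}, without any filtration/valuation lifting. One chooses $u,v\in L$ with $[v,u]\neq 0$, so that the subalgebra they generate surjects onto the Heisenberg algebra $H$ with kernel $N$; then $\mathfrak{D}(L)$ is embedded in the iterated pseudo-differential operator ring $\mathfrak{D}(N)((t_w;\delta_w))((t_v;\delta_v))((t_u;\delta_u))$ (Lemma~\ref{lem:commutativediagram}), and a \emph{surjective} ring map $\Phi_u$ from this series ring onto $k((t_z))((t_y))((t_x;\delta_x))\supseteq\mathfrak{D}(H)$ (Lemma~\ref{lem:morphismsofseries}) carries explicit elements $S,T\in\mathfrak{D}(L)$ to free group generators already known in $\mathfrak{D}(H)$ (Theorem~\ref{theo:freegroupHeisenberg}). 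No valuation on $\mathfrak{D}(L)$ is needed. The Ore case (2) is then deduced from (1) via the graded machinery: one filters $L$, notes that $\gr L$ is residually nilpotent, applies (1) there, and lifts using Lemma~\ref{lem:gradedOre} and Proposition~\ref{prop:freeobjecthomogeneous}/Theorem~\ref{coro:divisionrings}.

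Your plan runs the other way: take (2) as a black box and deduce (1) by filtering. The obstacle you name is exactly where this breaks down. When $L$ is merely residually nilpotent, $U(L)$ need not be Ore (e.g.\ $L$ free on two generators), and the only valuation-extension result available in the paper (Lemma~\ref{lem:gradedOre}(1)) is for Ore localizations. There is no established compatibility between Cohn's construction of $\mathfrak{D}(L)$ and an arbitrary filtration that would let you assert that $\gr_\upsilon(\mathfrak{D}(L))$ exists, is a graded division ring, and ``contains $\mathfrak{D}(M)$''; indeed the last phrase is ill-posed, since $\mathfrak{D}(M)$ is a genuine division ring while the associated graded is only a graded one, so at best you would need your free generators to be homogeneous there. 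Saying this is ``the main obstacle'' and then declaring the rest a dictionary translation is not a proof: it is precisely the step the paper avoids by using the series/specialization route for case (1).
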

Notice that $\mathfrak{D}(L)$ may not contain a free $k$-algebra of rank two if the characteristic of $k$
is not zero. In fact, as noted in \cite[p.147]{Lichtmanfreeuniversalenveloping}, the
proof given in \cite[p.204]{JacobsonLiealgebras} shows that if $L$ is finite dimensional over $k$,
then $\mathfrak{D}(L)$ is finite dimensional over its center. Therefore, it does not contain a
noncommutative free algebra.

\medskip

Concerning involutional versions of conjecture (GA), we are able to prove the following two theorems.

\begin{theo}\label{theo:intro2}
Let $k$ be a field of characteristic zero and $L$ be a nonabelian Lie $k$-algebra
endowed with a $k$-involution $*\colon L\rightarrow L$. Suppose that one of the following
conditions is satisfied.
\begin{enumerate}[\rm(1)]
\item $L$ is residually nilpotent;
\item The universal enveloping algebra $U(L)$ is an Ore domain and either
\begin{enumerate}[\rm(a)]
	\item there exists $x\in L$ such that $[x^*,x]\neq 0$ and the Lie $k$-subalgebra
	of $L$ generated by $\{x,x^*\}$ is of dimension at least three, or
	\item  $[x^*,x]=0$ for every $x\in L$, but there exist $x,y\in L$ with $[y,x]\neq 0$
	and the $k$-subspace of $L$ spanned by $\{x,x^*,y,y^*\}$ is not equal to the
	Lie $k$-subalgebra of $L$ generated by $\{x,x^*,y,y^*\}$.
	\end{enumerate}
\end{enumerate}
Then $\mathfrak{D}(L)$ contains a (noncommutative) free group $k$-algebra whose free
generators are symmetric with respect to the extension of $*$ to $\mathfrak{D}(L)$. \qed
\end{theo}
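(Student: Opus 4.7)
The plan is to apply the graded-and-filtered transfer machinery of \cite{SanchezObtaininggraded} and \cite{SanchezObtainingI}, tracked through the involution, so as to reduce each case to exhibiting symmetric free generators in a simpler setting.

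\medskip

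\emph{Case (1).} The lower central series $L=L_1\supset L_2\supset\cdots$ consists of characteristic ideals of $L$ and is therefore $*$-stable; residual nilpotence makes this a separated filtration, and its extension to $U(L)$ has associated graded algebra $U(L')$ where $L'=\bigoplus_{n\geq 1}L_n/L_{n+1}$ is a graded Lie $k$-algebra carrying the induced graded $k$-involution. One first checks that $L'$ is nonabelian: if every $[L_m,L_n]$ lay in $L_{m+n+1}$ then the recursion $L_{n+1}=[L,L_n]$ would force the filtration to stabilize, contradicting residual nilpotence unless $L$ is abelian. Next I would produce a pair of symmetric elements of $\mathfrak{D}(L')$ freely generating a free group $k$-algebra — the graded analogue of the present theorem, but easier because the ambient algebra is homogeneous and one has many symmetric homogeneous elements to work with — and then lift them to $\mathfrak{D}(L)$ via the involution-compatible form of the Rees-algebra construction of \cite{SanchezObtainingI}. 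On the graded side one can pass to a homogeneous $*$-stable nonabelian Lie subalgebra of $L'$ and conclude by a Magnus-type computation.

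\medskip

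\emph{Case (2).} Now $\mathfrak{D}(L)=Q(U(L))$, so the argument is more direct. Under hypothesis (a), the elements $x,x^*,[x,x^*]$ span (a piece of) a three-dimensional nonabelian $*$-stable Lie subalgebra that, up to inessential changes, resembles a Heisenberg algebra carrying a natural involution; the constructions of \cite{FerreiraGoncalvesHeisenberggroup} and \cite{FerreiraGoncalvesSanchezFreegroupssymmetric}, applied to symmetric $k$-linear combinations such as $x+x^*$ together with a symmetric element built from $[x,x^*]$, then yield a free group $k$-algebra on symmetric generators inside $Q(U(\langle x,x^*\rangle))\subseteq\mathfrak{D}(L)$. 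Under hypothesis (b) one has $[x^*,x]=0$ for every $x\in L$, which closes off the Heisenberg route; one instead exploits the hypothesis that $\mathrm{span}_k\{x,x^*,y,y^*\}$ is strictly smaller than the Lie subalgebra it generates to extract a ``new'' commutator (e.g.\ $[x,y]$ or $[x,y^*]$) that, combined with the symmetric generators $x+x^*,y+y^*$ and a standard Malcev--Neumann series estimation, freely generates a free group algebra.

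\medskip

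The principal obstacle is subcase (2b): the condition $[x^*,x]=0$ for every $x\in L$ is very restrictive and rules out the natural symmetric constructions (like $xx^*$ or any symmetric element built from $[x,x^*]$), so one is forced to pivot on the extra commutator produced by the span-versus-Lie-subalgebra discrepancy, and one must verify that it cooperates with the chosen symmetric elements in the appropriate series expansion — this is why the hypothesis is formulated so delicately. A secondary point is that, in case (1), the lifting of \cite{SanchezObtainingI} must carry symmetric generators to symmetric generators; this is automatic once the filtration is $*$-stable, but needs to be recorded at each step so that the ``leading part'' of a symmetric element is itself symmetric in the associated graded structure.
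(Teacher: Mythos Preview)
Your outline has the right ingredients but the logical flow is inverted, and the key technical content is absent. In Case~(1) the paper does \emph{not} pass to $\gr(L)$ and lift: when $U(L)$ is not an Ore domain (e.g.\ $L$ free on two generators) there is no evident extension of the filtration valuation from $U(L)$ to $\mathfrak{D}(L)$, so the transfer results of \cite{SanchezObtainingI} are unavailable. Instead the paper works directly: one classifies the $k$-involutions on the Heisenberg algebra $H$ (there are exactly three up to equivalence), finds a $*$-stable subalgebra $M\subseteq L$ with $M/N\cong H$ carrying one of them, embeds $\mathfrak{D}(M)$ into an iterated pseudo-differential series ring, and pulls back explicit generators from $\mathfrak{D}(H)$ through a surjection $\Phi_u$. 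The genuine work is producing, via Cauchon's theorem, explicit elements $S_1,T_1,T_2\in\mathfrak{D}(H)$ that are simultaneously symmetric for the relevant involution \emph{and} homogeneous for a chosen grading, and then checking invertibility of $1+S_1S_1^*$ etc.\ in the series ring. Your ``Magnus-type computation'' does not touch any of this.

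For Case~(2) the paper uses the filtered transfer, but in the direction opposite to yours: one filters $L$ so that $\gr(L)$ is residually nilpotent, invokes Case~(1) to obtain a free algebra on \emph{homogeneous} symmetric generators of positive degree inside $\gr_\upsilon(\mathfrak{D}(L))$, and then applies Theorem~\ref{coro:divisionrings}. In~(2a) the filtration is weighted ($u=a+a^*$ in degree $-1$, $v=a^*-a$ in degree $-2$), which forces the Heisenberg quotient to carry involution type~(1). In~(2b) the step you miss entirely is that the hypothesis $[x^*,x]=0$ for all $x$ \emph{excludes} involution type~(1) on the Heisenberg quotient of $\gr(L)$, leaving only types~(2),(3); your proposed use of $x+x^*,y+y^*$ and an extra commutator is not what happens. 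Finally, symmetry is not preserved by lifting a symmetric leading term: one instead takes $S_1\in\mathfrak{D}(L)$, forms the manifestly symmetric $S_1S_1^*$, and shows its leading term equals $\overline{S}_1^{\,2}$ because the graded involution already fixes $\overline{S}_1$---this is precisely why the Heisenberg elements had to be chosen symmetric and homogeneous from the outset.
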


\begin{theo}\label{theo:residuallynilpotentgroup}
Let $k$ be a field of characteristic zero and $G$ be a nonabelian residually torsion-free nilpotent 
group endowed with an involution $*\colon G\rightarrow G$. Then $k(G)$
contains a free group $k$-algebra whose free generators are symmetric with respect to the
extension of $*$ to $k(G)$. \qed
\end{theo}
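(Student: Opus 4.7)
My plan is to reduce Theorem~\ref{theo:residuallynilpotentgroup} to the Lie-algebraic case by passing to the graded Lie algebra attached to the lower central series of $G$, applying Theorem~\ref{theo:intro2}(1), and then lifting a symmetric free group algebra from the associated graded division ring back to $k(G)$ using the filtered methods developed in \cite{SanchezObtaininggraded} and \cite{SanchezObtainingI}.

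Concretely, I first form the graded Lie $k$-algebra
\[
L \;=\; \bigoplus_{n\ge 1}\bigl(\gamma_n G/\gamma_{n+1} G\bigr)\otimes_{\mathbb{Z}} k,
\]
where $\gamma_n G$ denotes the $n$th term of the lower central series. Residual torsion-free nilpotence of $G$ implies $\bigcap_n \omega(k[G])^n=0$, so the $\omega$-adic filtration on $k[G]$ is separated; by Quillen's theorem (valid since $\operatorname{char} k=0$) its associated graded ring is canonically $U(L)$. The involution $*$ preserves every $\gamma_n G$, hence descends to a $k$-involution on $L$ compatible with the given one on $k[G]$ through the filtration. Since $L$ is graded in strictly positive degrees it is residually nilpotent, and since $G$ is nonabelian and residually torsion-free nilpotent a standard lower-central-series argument shows that $L$ is also nonabelian. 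Theorem~\ref{theo:intro2}(1) then produces $*$-symmetric elements $\xi,\eta\in\mathfrak{D}(L)$ generating a noncommutative free group $k$-algebra.

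Next, the Malcev--Neumann embedding $k[G]\hookrightarrow k((G))$ attached to an ordering of $G$ extends the $\omega$-adic filtration on $k[G]$ to a valuation $v$ on $k(G)$; the extension of $*$ respects $v$, and the associated graded division ring of $v$ contains $U(L)$ and, inside it, a copy of $\mathfrak{D}(L)$, with induced involution matching the one on $L$. I then lift $\xi,\eta$ to $*$-symmetric elements $\widetilde\xi,\widetilde\eta\in k(G)$ of the same $v$-value; symmetric lifts exist by a straightforward averaging argument in characteristic zero applied to arbitrary lifts. The leading-term technique standard in the filtered machinery of \cite{SanchezObtainingI} then guarantees that any nontrivial reduced word in $\widetilde\xi^{\pm 1},\widetilde\eta^{\pm 1}$ has leading term equal to the corresponding word in $\xi^{\pm 1},\eta^{\pm 1}$, which is nonzero; hence $\widetilde\xi,\widetilde\eta$ generate a free group $k$-algebra in $k(G)$ with symmetric free generators.

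The main obstacle is the $*$-equivariant identification of the graded division ring of $v$ with $\mathfrak{D}(L)$; this is the involutional upgrade of the filtered-lifting theorems of the author's previous papers and requires a careful matching of two a priori different constructions of $\mathfrak{D}(L)$, the one coming from the Cohn/Lichtman construction applied to $U(L)$, and the one arising as a graded piece of the Malcev--Neumann series ring. A secondary technicality is confirming that $L$ is nonabelian, for which I use that the lower central quotients of a residually torsion-free nilpotent group are essentially torsion-free, so that $[L,L]=0$ would force $[G,G]\subseteq\bigcap_n\gamma_n G=\{1\}$, contradicting the hypothesis that $G$ is nonabelian.
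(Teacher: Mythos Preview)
Your outline has a genuine gap at the step you yourself flag as ``the main obstacle'': the associated graded ring $\gr_v(k(G))$ is \emph{not} $\mathfrak{D}(L)$ and does not contain it. When $k[G]$ is Ore, Lemma~\ref{lem:gradedOre}(3) gives $\gr_v(k(G))\cong \mathcal{H}^{-1}U(L)$, the localization of $U(L)$ at nonzero \emph{homogeneous} elements; this is a graded division ring which embeds in $\mathfrak{D}(L)$, not the other way round. The symmetric generators produced by Theorem~\ref{theo:intro2}(1) (see Corollary~\ref{coro:symmetricresiduallynilpotent}) are of the form $1+S_1S_1^*$, $1+T_iT_i^*$; these are sums of homogeneous elements of different degrees and hence are not homogeneous, and their inverses do not lie in $\mathcal{H}^{-1}U(L)$. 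Thus they are not elements of $\gr_v(k(G))$ at all, and Proposition~\ref{prop:freeobjecthomogeneous} cannot be invoked to lift them. A second problem is that for a general residually torsion-free nilpotent $G$ (e.g.\ $G$ free), $k[G]$ is not an Ore domain, so Lemma~\ref{lem:gradedOre} does not even guarantee an extension of the $\omega$-adic valuation to $k(G)$, let alone describe its associated graded ring.

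The paper avoids both difficulties by a different reduction. Instead of passing once to the graded Lie algebra of $G$, it first passes to a torsion-free nilpotent quotient and locates inside it a $*$-invariant Heisenberg subgroup $\mathbb{H}$ (Proposition~\ref{prop:InvHeis}); then $k[\mathbb{H}]$ \emph{is} Ore, the canonical filtration extends to a valuation on $k(\mathbb{H})$, and $\gr_v(k(\mathbb{H}))\cong \mathcal{H}^{-1}U(H)$ with $H$ the Heisenberg Lie algebra. Crucially, the paper does not pull back the free group generators of Theorem~\ref{theo:intro2}(1); it constructs explicit \emph{homogeneous} elements $\mathcal{S}_2\mathcal{S}_2^*,\mathcal{T}_j\mathcal{T}_j^*$ in $\gr_v(k(\mathbb{H}))$ whose images in $\mathfrak{D}(H)$ generate a free $k$-algebra (Theorem~\ref{theo:freegroupHeisenberg}(4),(5)), and then applies Theorem~\ref{coro:divisionrings}, which needs only a free \emph{algebra} of positive valuation on the graded side, to obtain a free group $k$-algebra on symmetric generators in $k(\mathbb{H})\subseteq k(G)$. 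The final passage from torsion-free nilpotent to residually torsion-free nilpotent uses the argument of \cite{FerreiraGoncalvesSanchezFreegroupssymmetric}. Your averaging-to-symmetrize-a-lift idea is therefore unnecessary, and your leading-term argument, as stated, does not apply because the elements you would need to lead with are not in the graded ring.
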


Notice that, since the map $L\mapsto L$, $x\mapsto -x$, is a $k$-involution
for any Lie $k$-algebra $L$, then Theorem~\ref{theo:intro2}(1) implies Theorem~\ref{theo:Lichtman}(1).
On the other hand, the proofs and the 
elements that generate the free group algebra in Theoren~\ref{theo:intro2}
are more complicated than those of Theorem~\ref{theo:Lichtman}.

Let $k$ be a field of characteristic zero.
The general strategy to obtain Theorems~\ref{theo:Lichtman}
and \ref{theo:intro2} goes back to Lichtman~\cite{Lichtmanfreeuniversalenveloping},
and it was also used in \cite{FerreiraGoncalvesSanchez2}.
Roughly speaking, one has to obtain free (group) algebras in the division ring 
$\mathfrak{D}(H)$, where $H$ is the 
Lie $k$-algebra $H=\langle x,y\colon 
[y,[y,x]]=[x,[y,x]]=0\rangle$. From this, one obtains free group algebras in 
$\mathfrak{D}(L)$ where $L$ is a residually nilpotent Lie $k$-algebra. Now
there is a way  to obtain free (group) algebras
in $\mathfrak{D}(L)$, where $L$ is a Lie $k$-algebra
such that $U(L)$ is an Ore domain, from the residually nilpotent case using filtered and graded methods.
We have improved and somehow clarified this strategy in order to obtain the two first theorems above.
Then 
Theorem~\ref{theo:residuallynilpotentgroup} is obtained from the previous results using the filtered 
methods from \cite{SanchezObtainingI} and a technique from \cite{FerreiraGoncalvesSanchez2}.

\bigskip

We begin Section~\ref{sec:generalfiltrations} introducing some basics on filtrations and valuations. In
Section~\ref{sec:filtrationuniversal}, we state some results on how filtrations and gradations of Lie algebras
induce filtrations and gradations of their universal enveloping algebras. Section~\ref{sec:freegroupalgebrasdivision}
is devoted to results about the existence of free group algebras obtained in \cite{SanchezObtaininggraded} and \cite{SanchezObtainingI}. They
show different ways of obtaining free group algebras in division rings generated by group graded rings,
and in division rings endowed with a valuation. 

The results in Section~\ref{sec:filtrations&valuations} are stated in more generality than necessary in
subsequent sections, but we believe there is some merit in the general statements and they could be of interest to others.

The first part of Section~\ref{sec:nilpotentLieinvolutions} is concerned with the classifications of all the 
$k$-involutions of the Heisenberg Lie $k$-algebra $H=\langle x,y\colon [y,[y,x]]=[x,[y,x]]=0 \rangle$
over $k$, a field of characteristic different from two. We are able to prove that, up to equivalence,
there are three involutions on $H$. We then use this to show that any nilpotent Lie $k$-algebra
endowed with an involution $*\colon L\rightarrow L$ contains a $*$-invariant $k$-subalgebra
$H$ of $L$ and whose restriction to $H$ is one of those three involutions.

Section~\ref{sec:nilpotentOre} deals with the problem of finding free (group) algebras
in the Ore ring of fractions of $U(L)$, the universal enveloping algebra of a nilpotent
Lie $k$-algebra $L$ over a field of characteristic zero. The main result is the technical
Theorem~\ref{theo:freegroupHeisenberg}, where a lot of free (group) algebras in $\mathfrak{D}(H)$
are obtained. Each of those free (group) algebras is suitable for later applications of the results
in Section~\ref{sec:freegroupalgebrasdivision}. Thus the free generators (or elements obtained from them)
will be homogeneous elements of some graded rings that appear in this and subsequent sections.
There could be simpler elements that do the job and avoid some technicalities. But we were not able to find them.

Let $L$ be a nonabelian residually nilpotent Lie $k$-algebra over a field of characteristic zero $k$. The
main aim of Section~\ref{sec:residuallynilpotent} is to obtain free (group) algebras
in the division ring $\mathfrak{D}(L)$ from the ones obtained in the previous section. It is done by a method
involving series that was developed in \cite{FerreiraGoncalvesSanchez2}. Although technical,
the argument is quite natural.

Let $L$ be a nonabelian Lie $k$-algebra over a field of characteristic zero
such that its universal enveloping algebra $U(L)$ is an Ore domain. In Section~\ref{sec:Ore},
we find free group algebras in $\mathfrak{D}(L)$, the Ore ring of fractions of $U(L)$, using the results in
previous sections. Roughly speaking, the idea of the proof is that for some natural filtrations of $L$, the associated
graded Lie algebra $\gr(L)$ is  residually nilpotent.  
The isomorphism of graded algebras 
$U(\gr(L))\cong \gr(U(L))$ allows us to use the results in previous sections
thanks to the fact that $U(L)$ is an Ore domain and the good behaviour of the Ore localization
with respect to filtrations described in Section~\ref{sec:filtrations&valuations}.

The arguments in Section~\ref{sec:Ore} should clarify why some of the elements in earlier sections where chosen
in that way. Here it is one of the places where Proposition~\ref{prop:freeobjecthomogeneous} and
Theorem~\ref{coro:divisionrings} are strongly used.

The last section of the paper is devoted to finding free group algebras in $k(G)$ for $k$ a field
of characteristic zero and $G$ a nonabelian residually torsion-free nilpotent group. 
Let $\mathbb{H}=\langle a,b\colon (b,(b,a))=(a,(b,a))=1\rangle$ be
the Heisenberg group. There are filtrations of the group ring $k[\mathbb{H}]$ such
that the induced $k$-algebra is isomorphic to $U(H)$ as graded $k$-algebras, where
 we consider a certain gradation in $U(H)$ induced from one of $H$. Again using the crucial
results of Section~\ref{sec:freegroupalgebrasdivision}, one can obtain suitable free group
algebras in $k(\mathbb{H})$. From this, using an argument from \cite{FerreiraGoncalvesSanchezFreegroupssymmetric}
one gets the desired free group algebras in $k(G)$.

\section{Filtrations, gradations and valuations}\label{sec:filtrations&valuations}

A \emph{strict ordering} on a set $S$ is a binary relation $<$ which is transitive and such that
$s_1<s_2$ and $s_2<s_1$ cannot both hold for elements $s_1,s_2\in S$. It is a \emph{strict total
ordering} if for every $s_1,s_2\in S$ exactly one of $s_1<s_2$, $s_2<s_1$ or $s_1=s_2$ holds.

A group $G$ is called \emph{orderable} \label{orderablegroup} if its elements can be given a strict total ordering $<$
which is left and right invariant. That is, $g_1<g_2$ implies that $g_1h<g_2h$ and $hg_1<hg_2$ for all $g_1,g_2, h\in G$.
We call the pair $(G,<)$ an \emph{ordered group}.
Clearly, any additive subgroup of the real numbers is orderable. More generally, torsion-free abelian groups,
torsion-free nilpotent groups and residually torsion-free nilpotent groups are orderable \cite{Fuchs}.

We would like to point out  that the results in this section are stated for ordered groups, 
but in the following ones they will be applied for the ordered group $\mathbb{Z}$ alone. 
We believe there is some merit in the general statements and they could be of interest to others.

\subsection{On filtrations and valuations}\label{sec:generalfiltrations}

Let $R$ be a ring and $(G,<)$ an ordered group. A family $F_GR=\{F_gR\}_{g\in G}$ of additive subgroups
of $R$ is a (descending)
\emph{$G$-filtration} if it satisfies the following four conditions
\begin{enumerate}[(F1)] 
\item $F_gR\supseteq F_hR$ for all $g,h\in G$ with $g\leq h$;
\item $F_gR\cdot F_hR\subseteq F_{gh}R$ for all $g,h\in G$;
\item $1\in F_1R$;
\item $\bigcup\limits_{g\in G}F_gR=R$.
\end{enumerate}
We say that the $G$-filtration is \emph{separating} if it also satisfies
\begin{enumerate}[(F5)]
\item For every $x\in R$, there exists $g\in G$ such that $x\in F_gR$ and
$x\notin F_{h}R$ for all $h\in G$ with $g<h$.
\end{enumerate}

\medskip

Let $R$ be a ring, $(G,<)$ be an ordered group and $F_GR=\{F_gR\}_{g\in G}$ be a $G$-filtration of $R$.
For each $g\in G$, define $F_{>g}R=\sum_{h>g}F_hR$ and
$$R_g=F_gR/F_{>g}R.$$  The fact that $G$ is an ordered group and the definition of $G$-filtration imply that 
$$F_{>g}R\cdot F_{>h}R\subseteq
F_{>gh}R,\
 F_{>h}R\cdot F_{\geq g}R\subseteq F_{>gh}R,\ F_{\geq g}R\cdot F_{>h}R\subseteq
F_{>gh}R$$ for any
$g,h\in G$. Thus a multiplication can be defined by
\begin{equation}\label{eq:filtrationmultiplication}
R_g\times R_h\longrightarrow R_{gh},\quad
(x+F_{>g}R)(y+F_{>h}R)=xy+F_{>gh}R.
\end{equation}
The \emph{associated graded ring} of $F_GR$ is defined to
be
$$\gr_{F_G}(R)=\bigoplus_{g\in G} R_g.$$ The addition on $\gr_{F_G}(R)$ arises
from the addition on each component $R_g$. The multiplication is
defined by extending the multiplication
\eqref{eq:filtrationmultiplication} on the components bilinearly to
all $\gr_{F_G}(R)$. Notice that $\gr_{F_G}(R)$ may not have
an identity element. If $F_GR$ is separating, then 
$\gr_{F_G}(R)$ is a ring with identity element $1+F_{>1}R$.

The \emph{Rees ring} of the filtration is 
$$\Rees_{F_G}(R)=\bigoplus_{g\in G}(F_gR) g,$$
which is a subring of the group ring $R[G]$. 
Thus an element of $\Rees_{F_G}(R)$ is a finite sum
$\sum_{g\in G}a_gg$ where $a_g\in F_gR$.
Notice that $\Rees_{F_G}(R)$
is a $G$-graded ring with identity element $1_{\Rees_{F_G}(R)}=1_R1_G$.

The next lemma is well known. It can be proved as in \cite[Section~1.8]{MarubayashiVanOystaeyen} where
the filtrations are ascending.

\begin{lem}\label{lem:Reesgradedring}
Let $R$ be a ring, $(G,<)$ be an ordered group and
$F_GR=\{F_gR\}_{g\in G}$ be a $G$-filtration of $R$. 
The following hold true.
\begin{enumerate}[\rm(1)] 
\item The subset $G_1=\{g\in G\colon g\leq 1\}$ is an Ore subset of $\Rees_v(R)$ and
the Ore localization ${G_1}^{-1}\Rees_{F_G} (R)=R[G]$, the group ring.
\item Let $J$ be the ideal of $\Rees_{F_G}(R)$
generated by $G^-=\{g\in G\colon g<1\}$. 
Then $J=\bigoplus\limits_{g\in G}(F_{>g}R)g$ and $\Rees_{F_G}(R)/J\cong \gr_{F_G}(R)$ as graded rings.
\item Let $I$ be the ideal of $\Rees_{F_G}(R)$ generated by the elements
$\{1-g\colon g\in G^-\}$. Then $\Rees_{F_G}(R)/I\cong R$. \qed
\end{enumerate}
\end{lem}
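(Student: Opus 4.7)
The plan is to treat the three items in turn, using the two basic facts that in the group ring $R[G]$ scalars from $R$ commute with elements of $G$ and that every $g \in G$ is a unit in $R[G]$.

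For part (1), I would first observe that $G_1$ is multiplicatively closed: if $g,h\le 1$, then by left-invariance $gh\le h\le 1$. To check the right Ore condition, fix $r=\sum_{h\in F}a_hh\in \Rees_{F_G}(R)$ with $F\subseteq G$ finite and $a_h\in F_hR$, and $s\in G_1$. Since $R$-elements commute with $G$-elements in the group ring, a direct manipulation gives $rs'=s\sum_{h}a_h(s^{-1}hs')$, and the right-hand side lies in $\Rees_{F_G}(R)$ as soon as $s^{-1}hs'\le h$ for each $h\in F$, i.e.\ $s'\le h^{-1}sh$. Since $F$ is finite and $G$ is totally ordered, an element $s'\in G_1$ satisfying all these inequalities exists (take $s'$ to be, say, a lower bound of the finite set $\{1\}\cup\{h^{-1}sh : h\in F\}$). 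The left Ore condition is symmetric. For the identification ${G_1}^{-1}\Rees_{F_G}(R)=R[G]$, note that an arbitrary $x=\sum_hc_hh\in R[G]$ with $c_h\in F_{g_h}R$ can be brought into the Rees ring after multiplication on the left by any $s\in G_1$ with $s\le g_hh^{-1}$ for all $h$ in the support of $x$; then $x=s^{-1}(sx)$.

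For part (2), the inclusion $\bigoplus_gF_{>g}R\cdot g\subseteq J$ follows by writing any $d\in F_{>g}R$ as a finite sum of elements $d_h\in F_hR$ with $h>g$ and factoring $d_hg=(d_hh)(h^{-1}g)$, where $d_hh\in\Rees_{F_G}(R)$ and $h^{-1}g\in G^-$. Conversely, a direct term-by-term inspection shows that any product $r\cdot g\cdot r'$ with $r,r'\in\Rees_{F_G}(R)$ and $g\in G^-$ contributes at each position $k$ a coefficient $b\in F_{h'}R$ with $h'>k$, hence in $F_{>k}R$. Once the description of $J$ is established, $\Rees_{F_G}(R)/J=\bigoplus_g(F_gR)g/(F_{>g}R)g\cong\bigoplus_gR_g=\gr_{F_G}(R)$ as graded abelian groups, and the compatibility of the multiplications is built into \eqref{eq:filtrationmultiplication}.

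For part (3), the augmentation-type map $\phi\colon \Rees_{F_G}(R)\to R$ defined by $\sum a_hh\mapsto\sum a_h$ is a surjective ring homomorphism (surjective because for any $a\in F_gR$ one has either $g\ge 1$, so that $a\in F_1R$ and $\phi(a\cdot 1)=a$, or $g\le 1$, so that $ag\in\Rees_{F_G}(R)$ and $\phi(ag)=a$) and clearly $I\subseteq\ker\phi$. Conversely, for any $\sum a_hh\in\ker\phi$ one has $\sum a_hh=-\sum_ha_h(1-h)$. Each summand $a_h(1-h)$ lies in $I$: when $h<1$ this is by definition, when $h=1$ it vanishes, and when $h>1$ the identity $a_h(1-h)=-(1-h^{-1})a_hh$, valid because scalars commute with group elements, expresses it as a product of the $I$-generator $1-h^{-1}$ and the Rees element $a_hh\in\Rees_{F_G}(R)$. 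Hence $\ker\phi=I$, giving $\Rees_{F_G}(R)/I\cong R$. The only real subtlety is the Ore argument in (1), which requires choosing the cofactor $s'$ to dominate the conjugation of the finite support of $r$ by $s$ while remaining below $1$; the other parts are essentially bookkeeping, consistent with the author's remark that the proof parallels that for ascending filtrations in \cite{MarubayashiVanOystaeyen}.
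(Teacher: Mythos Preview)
The paper does not give its own proof (it only cites \cite{MarubayashiVanOystaeyen}), so the comparison is with a standard argument. Your treatment of (1) and (2) is correct. In (3), however, there is a genuine gap.

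In your decomposition $\sum_h a_h h=-\sum_h a_h(1-h)$ you assert that for $h<1$ the term $a_h(1-h)$ lies in $I$ ``by definition''. This would require $a_h\cdot 1\in\Rees_{F_G}(R)$, i.e.\ $a_h\in F_1R$; but you only know $a_h\in F_hR$, and for $h<1$ one has $F_hR\supseteq F_1R$, so $a_h$ need not lie in $F_1R$. In that case $a_h(1-h)=a_h\cdot 1-a_h\cdot h$ is not even an element of $\Rees_{F_G}(R)$, let alone of $I$. (Concretely: take $G=\mathbb{Z}$, $R=k[t,t^{-1}]$, $F_nR=t^nk[t]$; then for $h=-1$ and $a_h=t^{-1}\in F_{-1}R$ the element $t^{-1}(1-z^{-1})$ is not in the Rees ring.) Your trick for $h>1$ of rewriting $a_h(1-h)=-(1-h^{-1})a_hh$ does not help for $h<1$, since then $1-h^{-1}$ is not a generator of $I$ and may not even lie in $\Rees_{F_G}(R)$.

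The fix is simple: instead of subtracting $(\sum_h a_h)\cdot 1$, subtract $(\sum_h a_h)\cdot h_0$ where $h_0$ is the least element of the (finite) support. Then
\[
\sum_h a_h h=\sum_h a_h(h-h_0)=\sum_{h\neq h_0}(a_hh)\,(1-h^{-1}h_0),
\]
and for every $h$ in the support with $h\neq h_0$ one has $h>h_0$, hence $h^{-1}h_0<1$, so $1-h^{-1}h_0$ is a generator of $I$ and $a_hh\in\Rees_{F_G}(R)$. This places each summand, and therefore the whole expression, in $I$.
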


Let $R$ be a ring and $(G,<)$ be an ordered group. A map $\upsilon\colon R\rightarrow G\cup\{\infty\}$
is a \emph{valuation} if it satisfies
\begin{enumerate}[(V1)]
\item $\upsilon(x)=\infty$ if, and only if, $x=0$;
\item $\upsilon(x+y)\geq \min\{\upsilon(x),\upsilon(y)\}$;
\item $\upsilon(xy)=\upsilon(x)\upsilon(y)$.
\end{enumerate}
Notice that $\upsilon(1)=1_G$ and $\upsilon(-x)=\upsilon(x)$
for all $x\in R$. 
For each $g\in G$, we set  $R_{\geq g}=\{f\in R\colon \upsilon(f)\geq g\}$
and $R_{>g}=\{f\in R\colon \upsilon(f)>g\}$.
Defining $F_gR=R_{\geq g}$ for each $g\in G$,
we obtain a separating filtration $F_GR=\{F_gR\}_{g\in G}$. We will denote
the graded ring and the Rees ring associated to this filtration as
$\gr_\upsilon(R)$ and $\Rees_\upsilon(R)$, respectively. Furthermore,
observe that $\gr_\upsilon(R)$ is a domain because of (V3). 
It is well known that the converse is also true \cite[p.91]{MarubayashiVanOystaeyen}. 
That is, given a 
separating filtration $F_GR=\{F_gR\}_{g\in G}$ of $R$ such that
the associated graded ring $\gr_{F_G}(R)$ is a domain, one can
define a valuation
$\upsilon\colon R\rightarrow G\cup\{\infty\}$ by $\upsilon(x)=\max\{g\in G\colon x\in F_gR\}$
for each $x\in R\setminus\{0\}$.

If $X$ is an Ore domain, by $Q_{cl}(X)$, we denote the Ore ring of fractions of $X$
that is, the Ore localization of $X$ at the multiplicative set $X\setminus\{0\}$.

The following lemma is a generalization of 
\cite[Propositions~16,17,18]{Lichtmanfreeuniversalenveloping}, with a
somewhat different proof.

\begin{lem}\label{lem:gradedOre}
Let $R$ be an Ore domain, $(G,<)$ be an ordered group and
$\upsilon\colon R\rightarrow G\cup\{\infty\}$ be a valuation. 
Let $D$ be the Ore ring of fractions of $R$. The
following hold true.
\begin{enumerate}[\rm(1)]
\item The valuation $\upsilon$ can be extended to a valuation $\upsilon\colon D\rightarrow
G\cup\{\infty\}$.
\item The set $\mathcal{H}$ of nonzero homogeneous elements
of $\gr_\upsilon(R)$ is an Ore subset of $\gr_\upsilon(R)$.
\item There exists an isomorphism of $G$-graded rings
$\lambda\colon \mathcal{H}^{-1}\gr_\upsilon(R)\rightarrow \gr_\upsilon(D)$
given by $f+R_{>\upsilon(f)}\mapsto f+D_{>\upsilon(f)}$ for all
$f\in R$.
\item If $G$ is poly-(torsion-free abelian), then $\gr_\upsilon(R)$ is an Ore domain.
\item If $G$ is poly-(torsion-free abelian), then $\Rees_\upsilon(R)$ is an Ore domain.
\item If $G$ is torsion-free abelian, then $\mathcal{J}=\Rees_\upsilon(R)\setminus J$ is
an Ore subset of $\Rees_\upsilon(R)$ and $\mathcal{J}^{-1}\Rees_\upsilon(R)$ is a local ring with
residue division ring $Q_{cl}(\gr_\upsilon(R))$. 
\end{enumerate}
\end{lem}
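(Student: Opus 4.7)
For part (1), define $\upsilon(ab^{-1}):=\upsilon(a)\upsilon(b)^{-1}$ for $a,b\in R$ with $b\neq 0$, and verify it is well-defined using the Ore condition in $R$: if $ab^{-1}=a'b'^{-1}$, pick $u,v\in R\setminus\{0\}$ with $bu=b'v$, so that $au=a'v$, and apply $\upsilon$. The axioms (V1)--(V3) then follow by first bringing two fractions to a common right denominator. For (2), lift nonzero homogeneous $\bar r,\bar s\in\gr_\upsilon(R)$ to $r,s\in R$ of the prescribed valuations, apply the Ore property in $R$ to obtain $rs_1=sr_1$ with $s_1\neq 0$, and take leading terms; since $\gr_\upsilon(R)$ is a domain by (V3), no cancellation occurs, giving an Ore equation in $\gr_\upsilon(R)$ whose coefficients are homogeneous. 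For (3), the inclusion $R\subseteq D$ induces a graded ring homomorphism $\mu\colon\gr_\upsilon(R)\to\gr_\upsilon(D)$; every nonzero homogeneous $q+D_{>\upsilon(q)}\in\gr_\upsilon(D)$ is a unit with inverse $q^{-1}+D_{>\upsilon(q^{-1})}$, so $\mu$ sends $\mathcal{H}$ into the units and extends to $\lambda$ by the universal property of Ore localization. Surjectivity is immediate from the identity $ab^{-1}+D_{>\upsilon(ab^{-1})}=\lambda\bigl((a+R_{>\upsilon(a)})(b+R_{>\upsilon(b)})^{-1}\bigr)$, and injectivity reduces to the injectivity of $\mu$ on each graded component.

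For (4) and (5), the key observation from (3) is that $\mathcal{H}^{-1}\gr_\upsilon(R)\cong\gr_\upsilon(D)$ is a crossed product $D_1*G$ in which $D_1:=R_{\ge 1}/R_{>1}$ is a division ring (the degree-$1_G$ component of a graded ring all of whose nonzero homogeneous elements are units). The plan is to invoke the standard result that crossed products of division rings by poly-(torsion-free abelian) groups are Ore domains, by induction on the polycyclic length: the torsion-free abelian case is handled by writing $A=\varinjlim A_i$ with $A_i\cong\mathbb{Z}^{n_i}$, observing that each $D_1*A_i$ is an iterated skew Laurent polynomial ring (Noetherian, hence Ore), and using that the direct limit of Ore domains along injective maps is again Ore. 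Once $\mathcal{H}^{-1}\gr_\upsilon(R)$ is known to be Ore, (4) follows by clearing denominators: given $a,b\in\gr_\upsilon(R)$ with $b\neq 0$, an Ore equation $b\alpha=a\beta$ in the localization transports to one in $\gr_\upsilon(R)$ after multiplying through by suitable elements of $\mathcal{H}$ supplied by (2). Part (5) is analogous, using that $\Rees_\upsilon(R)$ is itself $G$-graded with nonzero homogeneous elements forming an Ore set (by an argument parallel to (2)) and the corresponding graded localization a crossed product of a division ring by $G$.

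For (6), by (V3) the ideal $J$ of Lemma~\ref{lem:Reesgradedring} is completely prime, with $\Rees_\upsilon(R)/J\cong\gr_\upsilon(R)$ Ore by (4). To show $\mathcal{J}=\Rees_\upsilon(R)\setminus J$ is an Ore subset, start with $a\in\Rees_\upsilon(R)$ and $s\in\mathcal{J}$, apply the Ore property of $\Rees_\upsilon(R)$ from (5) to get $as_1=sa_1$ with $s_1\neq 0$, and if $s_1\in J$ refine $(a_1,s_1)$ by applying the Ore property of the quotient $\gr_\upsilon(R)$ to the images $\bar a,\bar s$ and lifting, so that the new $s_1$ represents a nonzero class in $\gr_\upsilon(R)$. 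The localization $\mathcal{J}^{-1}\Rees_\upsilon(R)$ is then local with unique maximal ideal $\mathcal{J}^{-1}J$, since any element outside this ideal has a nonzero image in $\gr_\upsilon(R)$ that becomes invertible in $Q_{cl}(\gr_\upsilon(R))$, and the residue ring fits in $\mathcal{J}^{-1}\Rees_\upsilon(R)/\mathcal{J}^{-1}J\cong Q_{cl}(\Rees_\upsilon(R)/J)\cong Q_{cl}(\gr_\upsilon(R))$. The main obstacle is part (4): both establishing that $D_1*G$ is Ore for arbitrary poly-(torsion-free abelian) $G$ (where one must go beyond Noetherian methods via direct limits of crossed products with torsion-free abelian groups) and transferring the Ore property from the graded localization back to $\gr_\upsilon(R)$ itself require careful bookkeeping; the Ore-set refinement for $\mathcal{J}$ in (6) is a secondary subtlety that depends on a compatible use of the Ore property on both $\Rees_\upsilon(R)$ and its quotient $\gr_\upsilon(R)$.
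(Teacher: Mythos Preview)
Your treatment of parts (1)--(3) matches the paper's proof. For (4) and (5) your route is close but packaged differently: for (5) you localize $\Rees_\upsilon(R)$ at all nonzero homogeneous elements, whereas the paper first localizes at $G_1=\{g\le 1\}$ to obtain $R[G]$ (Lemma~\ref{lem:Reesgradedring}(1)) and then observes $R[G]\subseteq D[G]$ with the same Ore field of fractions. Both lead to the same crossed product $D[G]$, so this is a legitimate variant. One small correction in (4): $\gr_\upsilon(D)$ is a crossed product of the degree-$1_G$ division ring over the \emph{support subgroup} $\{g\in G: D_g\neq 0\}$, not over all of $G$; this subgroup is still poly-(torsion-free abelian), so the conclusion stands.

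The genuine gap is in (6). Your ``refinement'' step does not do what you need. Lifting an Ore relation $\bar a\,\bar t=\bar s\,\bar b$ from $\gr_\upsilon(R)=\Rees_\upsilon(R)/J$ only yields
\[
a\,t-s\,b\in J,
\]
a congruence modulo $J$, not the equality $a\,t=s\,b$ required for the Ore condition on $\mathcal{J}$. There is no general principle that a completely prime ideal $J$ with Ore quotient in an Ore domain has Ore complement; this is precisely the ``localizable prime'' problem, which fails without extra structure. Trying to repair the congruence by a further Ore step in $\Rees_\upsilon(R)$ (say $jp=sq$ with $j=at-sb$) runs into the same issue: you cannot force $p\notin J$, so the argument is circular.

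The paper supplies the missing structure by constructing a multiplicative ``degree'' function $\omega\colon\Rees_\upsilon(R)\setminus\{0\}\to G_{\le 1}$, defined on $x=\sum a_ig_i$ by $\omega(x)=\max_i \upsilon(a_i)^{-1}g_i$. The two key facts are: every nonzero $x$ factors as $x=x'\,\omega(x)$ with $x'\in\mathcal{J}$ and $\omega(x)\in G$ (central in the group ring, using that $G$ is abelian), and $\omega(xy)=\omega(x)\omega(y)$. Given an Ore relation $xu=yv$ from (5) with $u\in\mathcal{J}$, one then multiplies through by the central unit $\omega(y)^{-1}$ to obtain $(x\omega(y)^{-1})u=y'v$ with $y'\in\mathcal{J}$; the inequality $\omega(x)=\omega(y)\omega(v)\le\omega(y)$ guarantees $x\omega(y)^{-1}\in\Rees_\upsilon(R)$. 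This explicit factorization, not an abstract lifting argument, is what makes (6) work.
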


\begin{proof}
The proof of (1) can be found in \cite[Proposition~9.1.1]{Cohnskew}
for example.

(2) Let $f_1,f_2\in R\setminus\{0\}$. Consider the nonzero homogeneous elements
 $f_1+R_{>\upsilon(f_1)}, f_2+R_{>\upsilon(f_2)}\in\gr_\upsilon
 (R)$. Since $R$ is an Ore domain, there exist $q_1,q_2\in R$ such
 that $q_1f_1=q_2f_2\neq 0$. Consider the nonzero homogeneous
 elements $q_1+R_{>\upsilon(q_1)}, q_2+R_{>\upsilon(q_2)}\in\gr_\upsilon
 (R)$. Then $$(q_1+R_{>\upsilon(q_1)})(f_1+R_{>\upsilon(f_1)})=
 (q_2+R_{>\upsilon(q_2)})(f_2+R_{>\upsilon(f_2)}).$$
 Now \cite[Lemma~8.1.1]{NastasescuvanOystaeyenMethodsgraded} implies
 the result.

(3) First note that $\gr_\upsilon(D)$ is a $G$-graded skew field,
and the natural maps 
$\iota\colon\gr_\upsilon(R)\hookrightarrow
\gr_\upsilon(D)$, $\kappa\colon \gr_\upsilon(R)\hookrightarrow
\mathcal{H}^{-1}\gr_\upsilon(R)$ are embeddings of $G$-graded rings. Thus, for each element
in $\mathcal{H}$, the image by $\iota$ is an homogeneous invertible
element in $\gr_\upsilon(D)$. By the universal property of the Ore
localization, there exists a homomorphism $\lambda\colon
\mathcal{H}^{-1}\gr_\upsilon(R)\rightarrow \gr_\upsilon(D)$ such
that $\iota=\lambda\kappa$. The homomorphism $\lambda$ is injective
since it is so when restricted to homogeneous elements. Let now
$f,q\in R\setminus\{0\}$. Consider $q^{-1}f+D_{>\upsilon(q^{-1}f)}$. This
element is the image by $\lambda$ of
$(q+R_{>\upsilon(q)})^{-1}(f+R_{>\upsilon(f)})$. Thus $\lambda$ is
surjective.

(4) The graded division ring $\gr_\upsilon(D)$ is a crossed product
of the division ring $D_0$ over the subgroup $\{g\in G\colon D_g\neq 0\}$, 
which is again poly-(torsion-free abelian). Thus $\gr_\upsilon(D)$ is an Ore domain by,
for example, \cite[Corollary~37.11]{Passman1}. We show that the Ore ring of fractions
$Q_{cl}(\gr_\upsilon(D))$ of $\gr_\upsilon(D)$ is also the Ore ring of fractions
of $\gr_\upsilon(R)$. For that, it is enough to show that every element of $Q_{cl}(\gr_\upsilon(D))$
is of the form $b^{-1}a$ with $a,b\in Q_{cl}(\gr_\upsilon(R))$, $b\neq 0$.
An element of $f\in Q_{cl}(\gr_\upsilon(D))$ is of the form
$(d_{g_1}+\dotsb +d_{g_r})^{-1}(e_{h_1}+\dotsb+e_{h_s})$ where
$d_{g_i}\in D_{g_i}$, $e_{h_j}\in D_{h_j}$. By (2),(3) and after bringing
to a common denominator, we may suppose that there exist
$t,a_i,b_j\in \mathcal{H}$ such that 
$$f=(t^{-1}a_1+\dotsb+t^{-1}a_r)^{-1}(t^{-1}b_1+\dotsb+t^{-1}b_s)=
(a_1+\dotsb+a_r)^{-1}(b_1+\dotsb+b_s).$$

(5) In the same way as (4), one can show that the group ring 
$D[G]$ and  $R[G]$ are Ore domains with the same Ore ring of fractions $Q_{cl}(R[G])$.
By Lemma~\ref{lem:Reesgradedring}(1), $R[G]$ is the localization of
$\Rees_\upsilon(R)$ at $G_1$. Hence one can proceed as in (4) to show
that $\Rees_\upsilon(R)$ is an Ore domain with Ore ring of fractions $Q_{cl}(R[G])$.

(6) Let $x=\sum\limits_{i=1}^n a_ig_i\in\Rees_\upsilon(R)$ where
we suppose that $a_i\neq 0$, $i=1,\dotsc,n$. Hence $\upsilon(a_i)\geq g_i$ for all $i$.
We suppose that if $i<j$ either $\upsilon(a_i)^{-1}g_i<\upsilon(a_j)^{-1}g_j$ or
$\upsilon(a_i)^{-1}g_i=\upsilon(a_j)^{-1}g_j$ and $g_i<g_j$. We define
$\omega(x)=\upsilon(a_n)^{-1}g_n\leq 1_G$. Observe that 
$x=x'\omega(x)$, where $$x'=\sum_{i=1}^n a_ig_ig_n^{-1}\upsilon(a_n).$$ Since
$\upsilon(a_n)^{-1}g_n\geq \upsilon(a_i)^{-1}g_i$, then 
$g_i^{-1}\upsilon(a_i)\geq g_n^{-1}\upsilon(a_n)$. It implies that 
$\upsilon(a_i)\geq g_ig_n^{-1}\upsilon(a_n)$.
Hence $x'\in \mathcal{J}$. Note that $x\in\mathcal{J}$ if and only if $\omega(x)=1$, since
$J=\bigoplus_{g\in G}F_{>g}R\cdot g$.

If $a,b\in R$, $g\in G$ such that $ag,bg\in\Rees_\upsilon(R)$,  then
 $$\omega((a+b)g)\leq
\max\{\omega(ag),\omega(bg)\}.$$ Let now
$y=\sum\limits_{j=1}^pb_jh_j\in \Rees_{\upsilon}(R)$
where we suppose that  $b_j\neq 0$, $j=1,\dotsc,p$, and 
if $j<l$ either $\upsilon(b_j)^{-1}h_j<\upsilon(b_l)^{-1}h_l$ or
$\upsilon(b_j)^{-1}h_j=\upsilon(b_l)^{-1}h_l$ and $h_j<h_l$. 

Now note that $$xy=a_nb_pg_nh_p+\sum_{\{(i,j)\colon (i,j)\neq(n,p)\}}a_ib_jg_ih_j,$$
and if $(i,j)\neq(n,p)$, either $\upsilon(a_ib_j)^{-1}g_ih_j<\upsilon(a_nb_p)^{-1}g_nh_p$
or $g_ih_j<g_nh_p$. Therefore $\omega(xy)=\omega(x)\omega(y)$.

Let $u\in\mathcal{J}$ and $v\in\Rees_\upsilon(R)$. 
 Since $\Rees_\upsilon(R)$ is an Ore domain,
there exist $x,y\in \Rees_\upsilon(R)$ such that $xu=yv$. We have to prove that
$y$ can be chosen such that
$y\in\mathcal{J}$. From $xu=x'u\omega(x)=y'v'\omega(y)\omega(v)=yv$,
we get $x\omega(y)^{-1}u=y'v$, where $y'\in\mathcal{J}$. 
Since $\omega(x)=\omega(y)\omega(v)$ with $\omega(v)\leq 1$,
then $\omega(x)\leq \omega(y)\leq 1$. It implies that 
$x\omega(y)^{-1}\in\Rees_\upsilon(R)$ and $\omega(y')=1$.
\end{proof}

\subsection{On gradations and filtrations of universal enveloping algebras}\label{sec:filtrationuniversal}

If $L$ is a Lie algebra, we denote its
\emph{universal enveloping algebra} by $U(L)$.

Let $k$ be a field, $L$ be a Lie $k$-algebra and  $G$ be a commutative group. We
say that $L$ is a \emph{$G$-graded Lie $k$-algebra} if there exists
a decomposition of $L$ as $L=\bigoplus\limits_{g\in G}L_g$ satisfying
\begin{enumerate}
\item $L_g$ is a $k$-subspace of $L$ for each $g\in G$,
\item $[L_g,L_h]\subseteq L_{g+h}$ for all $g,h\in G$.
\end{enumerate}
The elements of $\bigcup\limits_{g\in G} L_g$ are the
\emph{homogeneous elements} of $L$. If $x\in L_g$, we say that $x$
is \emph{homogeneous of degree} $g$.

The main examples we will deal with are the following.
Examples~(a),(b) are important
in Section~\ref{sec:nilpotentOre}, while examples (c),(d) are useful in Section~\ref{sec:Heisenberggroup}

\begin{ex}\label{ex:gradedLie}
Let $k$ be a field. We can endow the Heisenberg Lie $k$-algebra $H$
with different $\mathbb{Z}$-gradings. We will use the following ones.
\begin{enumerate}[(a)]
\item $H=\bigoplus_{n\in\mathbb{Z}}H_n$, where $H_{-1}=kx+ky$, $H_{-2}=kz$ and $H_n=0$ for all $n\neq -1,-2.$
\item $H=\bigoplus_{n\in\mathbb{Z}}H_n$, where $H_{-1}=kx,$ $H_{-2}=ky$, $H_{-3}=kz$ and $H_n=0$ for all $n\neq -1,-2,-3$.
\item $H=\bigoplus_{n\in\mathbb{Z}}H_n$, where $H_{1}=kx+ky$, $H_{2}=kz$ and $H_n=0$ for all $n\neq 1,2.$
\item $H=\bigoplus_{n\in\mathbb{Z}}H_n$, where $H_{1}=kx,$ $H_{2}=ky$, $H_{3}=kz$ and $H_n=0$ for all $n\neq 1,2,3$.
\qed
\end{enumerate}
\end{ex}

For each $g\in G$, let $\mathcal{B}_g=\{e^g_i\}_{i\in I_g}$ be a
$k$-basis of $L_g$. Then $\mathcal{B}=\bigcup_{g\in G}\mathcal{B}_g$
is a $k$-basis of $L$. Fix an ordering $<$ of $\mathcal{B}$.
Consider the universal enveloping algebra $U(L)$ of $L$. The
\emph{standard monomials} in $\mathcal{B}$ are the elements
\begin{equation}\label{eq:standardmonomials} 
e^{g_1}_{i_1}e^{g_2}_{i_2}\dotsb e^{g_r}_{i_r}\in U(L), \textrm{
with } e^{g_j}_{i_j}\in\mathcal{B}_{g_j},\ e^{g_1}_{i_1}\leq
e^{g_2}_{i_2}\leq \dotsb \leq e^{g_r}_{i_r}.
\end{equation}
By the Poincar\'e-Birkoff-Witt (PBW) theorem, the standard monomials, together with 1, form a
$k$-basis of $U(L)$. We say that the standard monomial
\eqref{eq:standardmonomials} is of degree $g=g_1+g_2+\dotsb+g_r.$
In this situation, one can obtain a gradation of the universal enveloping algebra as follows.

\begin{lem}\label{lem:gradeduniversalenveloping}
Let $G$ be a group and $L=\bigoplus\limits_{g\in G}L_g$ be a
$G$-graded Lie $k$-algebra. Then the universal enveloping algebra
$U(L)$ is an (associative) $G$-graded $k$-algebra. Indeed,
$$U(L)=\bigoplus_{g\in G} U(L)_g,$$ where 
 $U(L)_g=k\textrm{-span of
the standard monomials of degree }g$. \qed
\end{lem}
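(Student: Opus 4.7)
The plan is to deduce the gradation on $U(L)$ from a gradation on the tensor algebra $T(L)$, and then to identify the homogeneous components with spans of standard monomials via the PBW theorem.

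First I would grade the tensor algebra. Because $L = \bigoplus_{g \in G} L_g$, each tensor power $L^{\otimes n}$ decomposes as $\bigoplus_{g_1, \dots, g_n \in G} L_{g_1} \otimes \cdots \otimes L_{g_n}$; declaring the degree of a simple tensor $x_{g_1} \otimes \cdots \otimes x_{g_n}$ (with $x_{g_i} \in L_{g_i}$) to be $g_1 + \cdots + g_n$ turns $T(L)$ into an associative $G$-graded $k$-algebra. Recall that $U(L) = T(L)/I$, where $I$ is the two-sided ideal generated by the elements $x \otimes y - y \otimes x - [x,y]$ with $x, y \in L$. For $x \in L_g$ and $y \in L_h$, the graded hypothesis $[L_g, L_h] \subseteq L_{g+h}$ ensures that each such generator lies in the homogeneous component of degree $g+h$ of $T(L)$. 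Hence $I$ is generated by homogeneous elements, so $I$ is a graded ideal and the quotient $U(L)$ inherits a $G$-grading for which the canonical map $T(L) \to U(L)$ is a morphism of graded $k$-algebras.

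Under this inherited grading, the image in $U(L)$ of a standard monomial $e^{g_1}_{i_1} e^{g_2}_{i_2} \cdots e^{g_r}_{i_r}$ with $e^{g_j}_{i_j} \in \mathcal{B}_{g_j}$ is homogeneous of degree $g_1 + g_2 + \cdots + g_r$. The PBW theorem asserts that the standard monomials (together with $1$) form a $k$-basis of $U(L)$; partitioning this basis by degree gives
\begin{equation*}
U(L) = \bigoplus_{g \in G} V_g, \qquad V_g = k\text{-span of the standard monomials of degree } g,
\end{equation*}
where $V_0$ contains $1$. Since the $V_g$ are contained in the homogeneous components of the grading induced from $T(L)$ and already exhaust $U(L)$, the two decompositions coincide, yielding $U(L)_g = V_g$ as claimed.

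The only real step that needs attention is the verification that $I$ is a graded ideal — everything else is bookkeeping on a basis. This in turn reduces to a one-line check using the defining property $[L_g, L_h] \subseteq L_{g+h}$ of a graded Lie algebra, so no genuine obstacle appears; the lemma is essentially a transport-of-structure statement along $T(L) \twoheadrightarrow U(L)$, combined with PBW.
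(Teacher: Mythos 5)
Your proof is correct: the paper states this lemma without proof, as a standard fact, and your argument (grade $T(L)$, observe that the defining ideal is generated by the homogeneous elements $x\otimes y-y\otimes x-[x,y]$ with $x\in L_g$, $y\in L_h$ — using bilinearity to reduce to homogeneous arguments — and then identify the induced homogeneous components of $U(L)$ with the spans of standard monomials via PBW) is precisely the standard argument the paper is implicitly invoking. No gaps worth flagging.
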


\bigskip

Let $k$ be a field,  $L$ be a Lie $k$-algebra and $(G,<)$ be an ordered abelian group.
A (descending)  \emph{separating filtration} of $L$ is a family of subspaces $F_GL=\{F_gL\}_{g\in G}$, such that
\begin{enumerate}[(FL1)]
    \item $F_gL\supseteq F_hL$ for all $g,h\in G$ with $g\leq h$;
\item $[F_gL, F_hL]\subseteq F_{g+h}R$ for all $g,h\in G$;
\item $\bigcup_{g\in G}F_gL=L$;
\item For every $x\in L$, there exists $g\in G$ such that $x\in F_gL$ and
$x\notin F_{h}L$ for all $h\in G$ with $g<h$.
\end{enumerate}

Define $F_{>g}L=\sum_{h>g}F_hL$, and $L_g=F_gL/F_{>g}L$ for all $g\in G$. Then
one obtains the \emph{associated graded Lie $k$-algebra}
$$\gr_{F_G}{L}=\bigoplus_{g\in G} L_g.$$

The filtration $F_GL$ of $L$ induces a filtration $F_GU(L)=\{F_gU(L)\}_{g\in G}$ 
of the universal enveloping algebra
$U(L)$ as follows. Define, for each $g\in G$, $g\leq 0$,
$$F_gU(L)=k+\sum\limits_{g_1+\dotsb+g_r\geq g} L_{g_1}\dotsb L_{g_r},$$
and for each $g>0$
$$F_gU(L)=\sum_{g_1+\dotsb+g_r\geq g} L_{g_1}\dotsb L_{g_r}.$$
Then, $F_hU(L)\subseteq F_gU(L)$ for $g<h$, and $F_gU(L)
\cdot F_hU(L)\subseteq F_{g+h}U(L)$ for all $g,h\in G$.

An easy but important example for us is the following. It will be used in Section~\ref{sec:Ore}.
\begin{ex}\label{ex:usualfiltrationLiealgebra}
Let $L$ be a Lie $k$-algebra generated by two elements $u,v\in L$. Define $FL_r=0$ for all $r\geq 0$,  
$FL_{-1}=ku+kv$, and, for $n\leq -1$,
$$F_{n-1}L=\sum_{n_1+n_2+\dotsb+n_r\geq (n-1)}[F_{n_1}L,[F_{n_2}L,\dotsc] \dotsb].$$
Observe that, for each $n\in\mathbb{Z}$, there exists   $\mathcal{B}_{n}\subseteq L$
whose classes give a basis of $L_{n}=F_nL/F_{n+1}L$ such that
$\bigcup\limits_{n\in\integers} \mathcal{B}_{n}$ is a basis of $L$.  \qed
\end{ex}

The next lemma will be used in Sections~\ref{sec:Ore},\ref{sec:Heisenberggroup}

\begin{lem}\label{lem:filtrationuniversalenveloping}
Let $k$ be a field and $L$ be a Lie $k$-algebra.
The following hold true.
\begin{enumerate}[\rm(1)] 
\item Suppose that there exists a basis $\mathcal{B}_g=\{e^g_i\}_{i\in
I_g}$ of $L_g$ for each $g\in G$ such that $\bigcup\limits_{g\in
G}\mathcal{B}_g$ is a basis of $L$. Then the filtration is separating and
there exists an isomorphism of $G$-graded $k$-algebras
$$U(\gr_{F_G}(L))\cong\gr_{F_G}(U(L)).$$
Hence the filtration induces a valuation $\upsilon \colon U(L)\rightarrow G\cup\{\infty\}$.
\item If $U(L)$ is an Ore domain, then $U(\gr_{F_G}(L))$ is an
Ore domain.
\end{enumerate}
\end{lem}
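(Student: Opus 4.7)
The plan is to prove part (1) by (i) verifying the separating property directly from the basis hypothesis, (ii) constructing a graded Lie algebra map $\gr_{F_G}(L)\to\gr_{F_G}(U(L))$ that lifts by the universal property to a graded algebra map $\Phi\colon U(\gr_{F_G}(L))\to\gr_{F_G}(U(L))$, and (iii) comparing Poincar\'e–Birkhoff–Witt bases on both sides. Part (2) will then follow immediately from (1) together with Lemma~\ref{lem:gradedOre}(4).

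For the separating property, I would take $x\in L$ and write it uniquely as a finite sum $x=\sum c_{g,i}\,e^g_i$ in the basis $\bigcup_g\mathcal{B}_g$. Since $G$ is totally ordered and the sum is finite, let $g_0$ be the smallest $g$ with some $c_{g,i}\neq 0$. Every $e^g_i$ with $g\geq g_0$ lies in $F_{g_0}L$ by (FL1), so $x\in F_{g_0}L$. Reducing modulo $F_{>g_0}L$ yields $\sum_i c_{g_0,i}e^{g_0}_i\in L_{g_0}$, which is nonzero because $\mathcal{B}_{g_0}$ is a basis of $L_{g_0}$. Hence $x\notin F_{>g_0}L$, giving (F5) and also defining the induced valuation $\upsilon(x)=g_0$.

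Next, I would build the map $\varphi\colon \gr_{F_G}(L)\to\gr_{F_G}(U(L))$ by sending the class of $a\in F_gL$ to the class of $a\in F_gU(L)$ (well defined because $F_{>g}L\subseteq F_{>g}U(L)$). For homogeneous $a,b$ of degrees $g,h$, the Lie bracket $[a,b]\in F_{g+h}L$ agrees in $U(L)$ with $ab-ba$, so its image is $\varphi(\bar a)\varphi(\bar b)-\varphi(\bar b)\varphi(\bar a)$ modulo $F_{>g+h}U(L)$. Hence $\varphi$ is a homomorphism of $G$-graded Lie algebras and extends, by the universal property of $U$, to a homomorphism $\Phi\colon U(\gr_{F_G}(L))\to\gr_{F_G}(U(L))$ of $G$-graded $k$-algebras.

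To show $\Phi$ is bijective, I fix any total order on $\bigcup_g\mathcal{B}_g$ extending the data. By PBW, the standard monomials in this basis form a $k$-basis of $U(L)$ and also of $U(\gr_{F_G}(L))$, and the obvious bijection between the two monomial sets is precisely $\Phi$ restricted to basis elements. The main technical step, which I expect to be the main obstacle, is to verify that $F_gU(L)$ is the $k$-span of the standard monomials of degree $\geq g$. Spanning holds because any product $a_1\cdots a_r$ with $a_i\in F_{g_i}L$ and $\sum g_i\geq g$ can be reduced to standard form using $xy=yx+[x,y]$, and since $[x,y]\in F_{g_x+g_y}L$ rewriting it in the basis produces only basis elements of degree $\geq g_x+g_y$, so the total filtration degree of each standard monomial that appears is $\geq g$. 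Linear independence then follows from PBW uniqueness in $U(L)$. Consequently the classes of the standard monomials of degree exactly $g$ form a basis of $F_gU(L)/F_{>g}U(L)$, and $\Phi$ is a degreewise bijection, hence a graded isomorphism.

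For (2), ordered abelian groups are torsion-free abelian, hence poly-(torsion-free abelian). The valuation $\upsilon$ produced in (1) turns $U(L)$ into a filtered Ore domain, so Lemma~\ref{lem:gradedOre}(4) yields that $\gr_\upsilon(U(L))$ is an Ore domain. Transporting along the isomorphism $U(\gr_{F_G}(L))\cong\gr_{F_G}(U(L))$ from (1) gives the conclusion.
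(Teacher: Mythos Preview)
Your proposal is correct and follows essentially the same approach as the paper, which for (1) simply cites \cite[Proposition~1]{Vergne} and \cite[Lemma~2.1.2]{Boiscorpsenveloppants} and for (2) invokes Lemma~\ref{lem:gradedOre}(4); you have supplied the standard PBW argument those references contain. One small clarification: your opening paragraph establishes (F5) and a valuation for $L$, but the valuation claimed in the statement is on $U(L)$, which you obtain only after the monomial description of $F_gU(L)$ and the graded isomorphism are in hand---you might reorder the exposition to make this explicit.
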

\begin{proof}
(1) It can be proved in the same way as \cite[Proposition~1]{Vergne} or
\cite[Lemma~2.1.2]{Boiscorpsenveloppants}.

(2) By Lemma~\ref{lem:gradedOre}(4).
\end{proof}

\subsection{Free group algebras in division rings}\label{sec:freegroupalgebrasdivision}

Our work can be regarded as an application of some techniques on the
existence of free group algebras in division rings. In this section, we gather together
the version of those results that we will use.

We begin with \cite[Theorem~3.2]{SanchezObtaininggraded}. It tells us
a way to obtain a free group algebra from a free algebra in case the division ring
is the Ore ring of fractions of a graded Ore domain.

\begin{theo}\label{theo:freegroupgradedOre}
Let $G$ be an orderable group and $k$ be a commutative ring. Let \linebreak
 $A=\bigoplus\limits_{g\in G} A_g$ be a $G$-graded $k$-algebra.
Let $X$ be a subset  of $A$
consisting of homogeneous elements where we denote by $g_x\in G$ the degree of $x\in X$,
 i.e. $x\in A_{g_x}$. Suppose that the following three conditions are satisfied.
\begin{enumerate}[\rm(1)]
\item There exists a strict total ordering $<$ of $G$ such that $(G,<)$ is an ordered group and
 $1<g_x$ for all $x\in X$.
\item The $k$-subalgebra of $A$ generated by $X$ is the free
$k$-algebra on $X$.
\item $A$ is a left Ore domain with left Ore ring of fractions $Q_{cl}(A)$.
\end{enumerate}
Then the $k$-subalgebra of $Q(A)$ generated by $\{1+x,\,
(1+x)^{-1}\}_{x\in X}$ is the free group $k$-algebra
on the set $\{1+x\}_{x\in X}$. \qed
\end{theo}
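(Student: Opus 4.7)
My plan is to reduce the statement to the classical Magnus embedding theorem, which asserts that the map $k[F(X)]\to k\langle\langle X\rangle\rangle$ sending each free generator to $1+x$ is an injective ring homomorphism. The bridge between $k\langle\langle X\rangle\rangle$ and the target $Q(A)$ is the $G$-valuation induced by the grading.

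First I would set up the valuation $v\colon A\to G\cup\{\infty\}$ by $v(a)=\min\{g\in G: a_g\neq 0\}$ for $a\neq 0$, which is a valuation because $A$ is a $G$-graded domain and $(G,<)$ is ordered; by Lemma~\ref{lem:gradedOre}(1) it extends to $Q(A)$, and hypothesis~(1) forces $v(1+x)=v((1+x)^{-1})=1_G$ for every $x\in X$. The key computational input is the Neumann identity
\[
(1+x)^{-1}=\sum_{n=0}^{N}(-x)^{n}+(-x)^{N+1}(1+x)^{-1},
\]
which exhibits $(1+x)^{-1}$ as a polynomial truncation plus a remainder of $v$-valuation $g_x^{N+1}$, i.e.\ a better and better approximation in the filtration topology as $N$ grows.

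Let $B\subseteq Q(A)$ be the $k$-subalgebra generated by $\{1+x,(1+x)^{-1}\}_{x\in X}$ and $\psi\colon k[F(X)]\to B$ the natural surjection; only injectivity of $\psi$ requires proof. Given $\alpha=\sum_{w}c_{w}w\in k[F(X)]$ a finite $k$-linear combination of distinct reduced words in the letters of some finite $S\subseteq X$, substituting the Neumann truncation of order $N$ for each factor $(1+x)^{-1}$ in every $w$ yields a polynomial $p_\alpha^{(N)}\in k\langle X\rangle\subseteq A$ with
\[
v(\psi(\alpha)-p_\alpha^{(N)})\geq\min_{x\in S}g_x^{N+1}.
\]
Hypothesis~(2), together with the fact that each $g_x>1_G$ (so adding a factor $x$ to a monomial strictly increases its $G$-degree), implies that the lowest-total-degree part of $p_\alpha^{(N)}$ coincides, for $N$ sufficiently large, with the lowest-total-degree part of the Magnus expansion $\mu(\alpha)\in k\langle\langle X\rangle\rangle$. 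If $\psi(\alpha)=0$, then $p_\alpha^{(N)}$ has $v$-valuation exceeding any prescribed bound as $N$ grows, which forces successive homogeneous components of $\mu(\alpha)$ to vanish; injectivity of the Magnus embedding then gives $\alpha=0$.

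The main obstacle is the bookkeeping in the passage between the $G$-valuation on $Q(A)$ and the total-degree filtration used by Magnus. In the archimedean case $G=\mathbb{Z}$ (which is all that is needed in the rest of the paper), the growth $g_x^N\to\infty$ in $G$ makes the matching automatic. In the general setting of an orderable $G$ one restricts the analysis to the ordered subgroup $\langle g_x:x\in S\rangle$ of $G$ and, for each fixed total degree $n$, chooses $N$ large enough to exceed the finite collection of $G$-degrees carried by the degree-$n$ monomials of $\mu(\alpha)$; iterating this comparison degree by degree transports the vanishing $\mu(\alpha)=0$ back to $\alpha=0$, completing the proof of injectivity and hence the theorem.
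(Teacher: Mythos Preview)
The paper does not prove this theorem; it is quoted verbatim from \cite[Theorem~3.2]{SanchezObtaininggraded} and marked with a \qed, so there is no in-paper argument to compare against. I will therefore comment on your proposal on its own merits.

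For archimedean $G$ (in particular for $G=\mathbb{Z}$, which is the only case the paper actually uses) your argument is correct. The crucial step is: if $\psi(\alpha)=0$ then $v\bigl(p_\alpha^{(N)}\bigr)\geq \min_{x\in S}g_x^{\,N+1}$; when $G$ is archimedean the right-hand side eventually exceeds any fixed $d=\deg_G(m_0)$, so the $A_d$-component of $p_\alpha^{(N)}$ vanishes, and by hypothesis~(2) this kills the coefficient $c_{m_0}$. Since for $N\geq$ (total degree of $m_0$) that coefficient equals the Magnus coefficient, you get $\mu(\alpha)=0$ and hence $\alpha=0$.

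For a general ordered group your proposed patch does not close the gap. Take $G=\mathbb{Z}^2$ with the lexicographic order and $S=\{x,y\}$ with $g_x=(0,1)$, $g_y=(1,0)$; both are $>1_G$. Then $\min_{x\in S}g_x^{\,N+1}=(0,N{+}1)$, and this is \emph{never} $\geq (1,0)=g_y$. So the inequality $v\bigl(p_\alpha^{(N)}\bigr)\geq\min_{x\in S}g_x^{\,N+1}$ gives no information whatsoever about the coefficient of the degree-one monomial $y$ in $\mu(\alpha)$, no matter how large $N$ is. Your ``degree by degree'' iteration already fails at total degree $1$: to kill the coefficient of $y$ you would need some $N$ with $g_x^{\,N+1}>g_y$, which does not exist. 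Restricting to the subgroup $\langle g_x:x\in S\rangle$ does not help, since that subgroup is all of $\mathbb{Z}^2$ in this example and the obstruction is intrinsic. A correct proof for arbitrary orderable $G$ requires a different device (for instance, working in a suitable $G$-graded completion or in a Malcev--Neumann series ring built from the grading, rather than relying on the single estimate $\min_x g_x^{\,N+1}$); this is presumably what is done in \cite{SanchezObtaininggraded}.
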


The next proposition is \cite[Proposition~2.5(4')]{SanchezObtainingI}.
It shows that the existence of a free group algebra in the graded ring
induced by a valuation on a division ring $D$, (under some circumstances) 
implies the existence of a free group
algebra in $D$.

\begin{prop}\label{prop:freeobjecthomogeneous}
Let $Z$ be a commutative ring and $R$ be a $Z$-algebra. Let 
$\upsilon\colon R\rightarrow \mathbb{Z}\cup\{\infty\}$ be a valuation. Let $X$
be a subset of elements of $R$ such that
the map $X\rightarrow \gr_\upsilon(R)$, $x\mapsto x+R_{>\upsilon(x)}$,
is injective.
\begin{enumerate}[\rm(1)]
\item The elements of $X$ are invertible in $R$.

\item The $Z_0$-subalgebra of $\gr_\upsilon(R)$ generated by
$\{x+R_{>\upsilon(x)},\, x^{-1}+R_{>\upsilon(x^{-1})}\}_{x\in X}$
is the free group $Z_0$-algebra on $\{x+R_{>\upsilon(x)}\}_{x\in
X}$.
\end{enumerate}
Then the $Z$-subalgebra of $R$ generated by $\{x,\,
x^{-1}\}_{x\in X}$ is the free group $Z$-algebra on $X$,
where $Z_0=Z_{\geq 0}/Z_{>0}$. \qed
\end{prop}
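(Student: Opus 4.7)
The plan is to show that the canonical $Z$-algebra homomorphism $\phi\colon Z[F(X)]\to R$ --- where $F(X)$ is the free group on $X$ and $\phi$ sends each $x\in X$ to itself --- is injective. The whole argument is a leading-term analysis with respect to the valuation $\upsilon$. The first bookkeeping step is to match word data in $F(X)$ with valuation data in $R$: for any reduced word $g=x_{i_1}^{\epsilon_1}\cdots x_{i_k}^{\epsilon_k}\in F(X)$, set $d(g):=\sum_{j=1}^{k}\epsilon_j\,\upsilon(x_{i_j})\in\mathbb{Z}$; then, using the valuation axiom $\upsilon(ab)=\upsilon(a)+\upsilon(b)$ together with invertibility (hypothesis~(1)) so that $\upsilon(x^{-1})=-\upsilon(x)$, one has $\upsilon(\phi(g))=d(g)$, and the image of $\phi(g)$ in $\gr_\upsilon(R)_{d(g)}$ is the product of leading terms $(x_{i_1}+R_{>\upsilon(x_{i_1})})^{\epsilon_1}\cdots(x_{i_k}+R_{>\upsilon(x_{i_k})})^{\epsilon_k}$, i.e.\ the corresponding reduced-word element of the free group $Z_0$-algebra supplied by hypothesis~(2). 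In particular, distinct $g\in F(X)$ yield distinct, nonzero elements of $\gr_\upsilon(R)$.

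Suppose, for contradiction, that $\phi(p)=0$ for some nonzero $p=\sum_{g}z_g\cdot g\in Z[F(X)]$. For each $g$ in the support of $p$, let $v_g:=\upsilon(z_g)+d(g)=\upsilon(z_g\phi(g))$, and set $n:=\min_g v_g$. Reducing the identity $\phi(p)=0$ modulo $R_{>n}$ yields the homogeneous relation
$$\sum_{g\,:\,v_g=n}\bigl(z_g+Z_{>\upsilon(z_g)}\bigr)\cdot\bigl(\phi(g)+R_{>d(g)}\bigr)\;=\;0\qquad\text{in }\gr_\upsilon(R)_n,$$
with every coefficient nonzero by construction. In the essential case for this paper --- $Z$ a field embedded in $R$ with $\upsilon|_{Z\setminus\{0\}}=0$, so that $Z=Z_0$ --- every $v_g$ equals $d(g)$, every coefficient lies in $Z_0$, and the displayed equation is a nontrivial $Z_0$-linear dependence among distinct words of $F(X)$ inside the $Z_0$-subalgebra of $\gr_\upsilon(R)$ generated by the leading terms of $X\cup X^{-1}$. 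This directly contradicts hypothesis~(2), so $\phi$ is injective.

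I expect the main obstacle to be the general case in which $\upsilon$ is nontrivial on the image of $Z$ in $R$: some of the coefficients $z_g+Z_{>\upsilon(z_g)}$ may then lie in strictly positive graded pieces of $\gr_\upsilon(Z)$, and hypothesis~(2), which only asserts $Z_0$-freeness, cannot be invoked at face value. The remedy is to fix some $g_0$ with $v_{g_0}=n$, right-multiply the graded relation by the invertible homogeneous element $(\phi(g_0)+R_{>d(g_0)})^{-1}=\overline{\phi(g_0^{-1})}$ to translate everything into degree $\upsilon(z_{g_0})$, and then work inside the Rees ring $\Rees_\upsilon(R)$ (cf.\ Lemma~\ref{lem:Reesgradedring}) to shift the relation down to degree zero, where hypothesis~(2) finally applies. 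In the applications pursued in the present paper, $Z$ is always a field, so this additional maneuver is unnecessary.
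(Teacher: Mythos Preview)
The paper does not prove this proposition; it is quoted from \cite{SanchezObtainingI} and marked with a \qed, so there is no ``paper's own proof'' to compare against. Your leading-term argument is the standard one and is correct in the case you single out --- $Z$ a field embedded in $R$ with $\upsilon$ trivial on $Z\setminus\{0\}$, so that $Z=Z_0$ --- and, as you note, this is the only situation in which the proposition is invoked in the present paper (namely in the proof of Theorem~\ref{theo:freegroupOre}, with $Z=k$).

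Your treatment of the general case, however, is not convincing. The difficulty you identify is real: when $\upsilon$ is nontrivial on $Z$, the leading coefficients $\bar z_g$ land in various graded pieces $\gr_\upsilon(Z)_{\upsilon(z_g)}$ rather than in $Z_0$, and hypothesis~(2) speaks only of $Z_0$-freeness. But the fix you sketch --- multiply by $\overline{\phi(g_0^{-1})}$ and then ``work inside the Rees ring to shift the relation down to degree zero'' --- does not do what you claim. Translating by an invertible homogeneous element of $\gr_\upsilon(R)$ moves the relation to degree $\upsilon(z_{g_0})$, not to degree $0$, and the Rees ring does not supply inverses for the nonunit elements of $Z$ that you would need to complete the shift; Lemma~\ref{lem:Reesgradedring} gives localizations at $G_1$ and describes the quotients, but none of this produces a degree-zero version of your relation with $Z_0$-coefficients. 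If you want a clean argument in the general case, it is better to exploit directly that $\gr_\upsilon(R)$ is a \emph{domain}: among the $g$ with $v_g=n$, pick one, say $g_0$, with $\upsilon(z_{g_0})$ maximal (equivalently $d(g_0)$ minimal); all other terms $\bar z_g\,\overline{\phi(g)}$ with $v_g=n$ then lie in $\gr_\upsilon(Z)_{\geq \upsilon(z_{g_0})}\cdot(\text{words})$, and after factoring one can reduce to a genuine $Z_0$-relation in the free group $Z_0$-algebra. Alternatively, one can simply observe that $\gr_\upsilon(Z)$ is a graded domain acting faithfully on the $Z_0$-free module spanned by the words, which forces the $\gr_\upsilon(Z)$-span of those words to be free as well. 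Either route closes the gap; the Rees-ring detour does not.
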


The next theorem is \cite[Theorem~3.2]{SanchezObtainingI}. It tells us that
sometimes, in order to find a free group algebra in division ring $D$,
it is enough to find a free algebra on the graded ring induced by
a valuation on $D$.

\begin{theo}\label{coro:divisionrings}
Let $D$ be a division ring with prime subring $Z$. Let  $\upsilon\colon
D\rightarrow \mathbb{R}\cup\{\infty\}$ be a nontrivial valuation.
 Let
$X$ be a subset of $D$ satisfying the following
three conditions.
\begin{enumerate}[\rm (1)]
\item The map $X\rightarrow\gr_\upsilon(D)$, $x\mapsto x+D_{>\upsilon(x)}$, is injective.
\item For each $x\in X$, $\upsilon(x)>0$.
\item The $Z_0$-subalgebra of $\gr_\upsilon(D)$ generated by the set
$\{x+D_{>\upsilon(x)}\}_{x\in X}$ is the free $Z_0$-algebra
on the set $\{x+D_{>\upsilon(x)}\}_{x\in X}$, where 
$Z_0= Z_{\geq 0}/Z_{>0}\subseteq D_0$.
\end{enumerate}
Then, for any central subfield $k$,  the
$k$-subalgebra of $D$ generated by $\{1+x,\, (1+x)^{-1}\}_{x\in
X}$ is the free group $k$-algebra on $\{1+x\}_{x\in X}$. \qed
\end{theo}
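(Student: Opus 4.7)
The plan is to lift the graded freeness given by hypothesis~(3) back to $D$ itself by means of a Magnus-style leading-term argument carried out in the $\upsilon$-adic completion of $D$.

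First I would observe that each $1+x$ is invertible in $D$: by~(2), $\upsilon(x)>0=\upsilon(1)$, so $\upsilon(1+x)=0$ and in particular $1+x\neq 0$, which is enough in the division ring $D$.

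For the freeness statement, I would take any nonzero element $\alpha$ of the free group $k$-algebra $k[F]$ on generators $\{y_x\}_{x\in X}$, involving only finitely many $x_1,\dotsc,x_n\in X$, and aim to show that the image of $\alpha$ in $D$ under $y_x\mapsto 1+x$ is nonzero. In the $\upsilon$-adic completion $\hat D$ each $(1+x_i)^{-1}$ expands as a convergent geometric series $\sum_{n\geq 0}(-x_i)^n$ (convergence is granted by $\upsilon(x_i)>0$), so the image of $\alpha$ becomes a convergent sum $\sum_{m\geq 0}p_m(x_1,\dotsc,x_n)$, where $p_m\in k\langle Y_1,\dotsc,Y_n\rangle$ is a $k$-polynomial of total degree $m$ in the non-commuting formal variables. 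Injectivity of the classical Magnus embedding $k[F]\hookrightarrow k\langle\langle Y_1,\dotsc,Y_n\rangle\rangle$, $y_x\mapsto 1+Y_x$, then ensures that some $p_{m_0}$ is a nonzero free polynomial.

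Since $\upsilon(x_i)>0$ for all $i$, for any fixed bound only finitely many monomials in the expansion $\sum_m p_m(x)$ have valuation below that bound, so a minimum valuation is actually attained. Collecting the monomials that achieve this minimum yields a finite $k$-linear combination of products of the $x_i$'s, whose image in $\gr_\upsilon(D)$ is the corresponding combination of products of the $\bar x_i$'s. Hypothesis~(3), once upgraded from $Z_0$-freeness to $k$-linear independence of the relevant homogeneous components, guarantees that this leading combination is nonzero in $\gr_\upsilon(D)$; hence the image of $\alpha$ in $\hat D$ has finite $\upsilon$-valuation, and so is nonzero in $\hat D$ and \emph{a fortiori} in $D$. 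This is exactly the kind of lifting codified by Proposition~\ref{prop:freeobjecthomogeneous}, applied to the set $\{1+x\}_{x\in X}$ in place of $X$.

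The main obstacle is precisely that final bridging step: passing from the $Z_0$-freeness supplied by~(3) to the $k$-linear independence of the particular homogeneous combinations that appear as leading terms, especially when the central subfield $k$ is strictly larger than the image of $Z_0$ inside $D_0\subseteq \gr_\upsilon(D)$. This requires a careful analysis of how $k$ sits inside $\gr_\upsilon(D)$ and of the compatibility of the $k$-action with the $\upsilon$-grading, and it is where the filtration and Rees-ring techniques developed in Section~\ref{sec:generalfiltrations} do the real work.
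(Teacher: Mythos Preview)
The paper does not prove this theorem; it is quoted verbatim from \cite[Theorem~3.2]{SanchezObtainingI} and marked \qed. So there is no ``paper's own proof'' to compare against here, and your task is really to reconstruct the argument from the cited source.

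Your Magnus-style outline is the right idea, and the obstacle you isolate---upgrading from $Z_0$-freeness to $k$-linear independence in $\gr_\upsilon(D)$---is genuine, not cosmetic. Two specific points, however, need repair.

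First, your closing appeal to Proposition~\ref{prop:freeobjecthomogeneous} ``applied to the set $\{1+x\}_{x\in X}$'' does not work. Since $\upsilon(x)>0$, one has $\upsilon(1+x)=0$ and $\overline{1+x}=1\in D_0$; likewise $\overline{(1+x)^{-1}}=1$. So in $\gr_\upsilon(D)$ the images of the proposed generators all collapse to $1$, and the hypothesis of that proposition (a free \emph{group} $Z_0$-algebra on the $\overline{1+x}$ inside $\gr_\upsilon(D)$) is simply false. The proposition cannot be invoked on the nose; one really has to run the leading-term argument directly, or pass through the Rees ring and Theorem~\ref{theo:freegroupgradedOre} applied to $\gr_\upsilon(D)$ and then lift.

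Second, the phrase ``for any central subfield $k$'' is more delicate than your sketch acknowledges. Take $D=\mathbb{Q}_p$ with the $p$-adic valuation and $X=\{p\}$: then $Z_0=\mathbb{F}_p$, $\gr_\upsilon(D)\cong\mathbb{F}_p[t,t^{-1}]$, hypothesis~(3) holds, yet for $k=\mathbb{Q}$ the element $1+p$ is rational, so $\{(1+p)^n\}_{n\in\mathbb{Z}}$ is certainly not $\mathbb{Q}$-linearly independent. In every application in the present paper one has $\upsilon|_{k^\times}=0$ (the filtrations all contain $k$ in degree~$0$), and with that extra hypothesis your leading-term argument can be made to go through: the coefficients from $k$ land in $D_0$, and the minimal-valuation piece of $\sum_m P_m(x)$ is a nonzero element of the free $Z_0$-algebra tensored up to $k$ over $Z_0$ inside $D_0\cdot\langle\bar x_i\rangle$. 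Checking that this tensoring-up step is harmless is exactly where the careful analysis you allude to is needed; you should verify the precise hypotheses in \cite{SanchezObtainingI} rather than rely on the summary given here.
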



\section{Nilpotent Lie algebras with involutions}\label{sec:nilpotentLieinvolutions}

Let $k$ be a field and $L$ be a
Lie $k$-algebra. A $k$-linear map  $*\colon L\rightarrow L$ is a
$k$-\emph{involution} \label{kinvolutionLie}
if for all $x,y\in L$, $[x,y]^*=[y^*, x^*]$,
$x^{**}= x$. The main example of a $k$-involution in a Lie
$k$-algebra is what we call the \emph{principal involution}. It is
defined by $x\mapsto -x$ for all $x\in L$.

The \emph{Heisenberg Lie $k$-algebra} is the Lie $k$-algebra with
presentation
\begin{equation}\label{eq:Heisenberg}
H=\langle x,y\mid [[y,x],x]=[[y,x],y]=0 \rangle.
\end{equation}
The Heisenberg Lie $k$-algebra  can also be characterized as the
unique Lie $k$-algebra of dimension three such that $[H,H]$ has
dimension one and $[H,H]$ is contained in the center of $H$, see
\cite[Section~4.III]{JacobsonLiealgebras}.

Let $k$ be a field of characteristic different from $2$.
In this section, we first find all the $k$-involutions of $H$,
secondly we show that there are essentially three involutions on $H$, and then
that any nilpotent Lie $k$-algebra with involution 
contains a $k$-subalgebra isomorphic to $H$ invariant under the involution and such 
that the restriction of the involution to $H$ is one of those three ones.

\begin{lem}\label{lem:involutionHeisenbergalgebra}
Let $k$ be a field of characteristic different from two. Let \linebreak $H=\langle x,y \mid
[x,[y,x]]=[y,[y,x]]=0 \rangle$ be the Heisenberg Lie $k$-algebra and $z=[y,x]$.
Then any $k$-involution $\tau\colon H\rightarrow H$  is of one of
the following forms:
\begin{enumerate}[\rm(i)]
\item
$\left\{\begin{array}{l} \tau(x)= ax+by+cz \\
\tau(y)= dx-ay+fz \\ \tau(z)=z \end{array}\right.$ where $a,b,c,d,f\in k$
satisfy \ $\left\{\begin{array}{rrr}
 a^2 &      + & bd=1,  \\
   (a+1)c & + & bf=0 \\ dc & +& (1-a)f=0
\end{array}\right. $

\item $\left\{\begin{array}{l} \tau(x)= x+cz \\
\tau(y)= y+fz \\ \tau(z)= -z \end{array}\right.$ where $c,f\in k$.

\item $\left\{\begin{array}{l} \tau(x)= -x \\
\tau(y)= -y \\ \tau(z)= -z \end{array}\right.$
\end{enumerate}

\end{lem}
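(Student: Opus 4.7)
The plan is to parameterise $\tau$ on the basis $\{x,y,z\}$ and let the bracket-reversing rule determine its action on $z$ automatically. Writing
\begin{equation*}
\tau(x)=ax+by+cz,\qquad \tau(y)=dx+ey+fz,
\end{equation*}
and using $[x,y]=-z$ together with $[x,z]=[y,z]=0$, bilinearity of the bracket gives at once
\begin{equation*}
\tau(z)=\tau([y,x])=[\tau(x),\tau(y)]=(bd-ae)\,z,
\end{equation*}
so $\tau$ preserves the central line $kz=[H,H]$ and acts there by the scalar $bd-ae$.

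Next I would impose $\tau^{2}=\mathrm{id}$ on each basis element. Expanding $\tau^{2}(x)$ and $\tau^{2}(y)$ and matching coefficients in $\{x,y,z\}$, and using $\tau^{2}(z)=(bd-ae)^{2}z$, yields the seven scalar conditions
\begin{gather*}
a^{2}+bd=1,\qquad bd+e^{2}=1,\qquad (bd-ae)^{2}=1,\\
b(a+e)=0,\qquad d(a+e)=0,\\
ac+bf+c(bd-ae)=0,\qquad dc+ef+f(bd-ae)=0.
\end{gather*}
The first two force $a^{2}=e^{2}$, and since $\mathrm{char}\,k\neq 2$ this splits into the two subcases $e=-a$ and $e=a$, which is the fork around which the whole argument is organised.

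If $e=-a$ then $b(a+e)=d(a+e)=0$ are automatic, and $bd-ae=bd+a^{2}=1$, so $\tau(z)=z$. The two remaining equations collapse to $(a+1)c+bf=0$ and $dc+(1-a)f=0$, which is exactly case (i). If $e=a$, then $2ab=2ad=0$ gives $ab=ad=0$. The subcase $a=0=e$ produces $bd=1$ and is absorbed into case (i) with $a=0$. If $a\neq 0$ then $b=d=0$ and $a^{2}=1$, so $a=\pm 1$ and $\tau(z)=-z$. For $a=1$ the $c,f$-equations become identities so $c,f$ remain free and we get case (ii); for $a=-1$ they reduce to $-2c=0$ and $-2f=0$, forcing $c=f=0$ and yielding the principal involution of case (iii).

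The main obstacle is mild: once the first observation pins $\tau(z)$ down as a scalar multiple of $z$, the classification reduces to a clean case analysis on $a=\pm e$. The hypothesis $\mathrm{char}\,k\neq 2$ enters in exactly two places, namely to eliminate a spurious continuous family when $e=a\neq 0$ and to force $c=f=0$ in the $a=-1$ subcase, without which (iii) would be a two-parameter family rather than a single involution.
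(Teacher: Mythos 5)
Your proposal is correct and takes essentially the same approach as the paper: expand $\tau(x),\tau(y)$ in the basis $\{x,y,z\}$, use the bracket-reversing rule to pin $\tau(z)$ down as a multiple of $z$, impose $\tau^{2}=\mathrm{id}$ coefficientwise, and case-split on $a=\pm e$. The only (cosmetic) difference is that the paper first observes $z^{*}\in Z(H)$ with $(z^{*})^{*}=z$ and forks on $z^{*}=\pm z$, then on $a=\pm e$ inside each case, whereas you fork on $a=\pm e$ directly and let the sign of $\tau(z)=(bd-ae)z$ follow; the resulting equations and subcases are identical.
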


\begin{proof}
Let $*\colon H\rightarrow H$ be a $k$-involution on $H$. Note that
$Z(H)$, the center of $H$, is  the one-dimensional $k$-subspace
generated by $z=[y,x]$. Since $z^*\in Z(H)$ and $(z^*)^*=z$, we
obtain that $z^*=z$ or $z^*=-z$.

Suppose that $x^*=ax+by+cz$ and $y^*=dx+ey+fz$ where $a,b,c,d,e,f\in
k$.

\noindent{\underline{Case 1: $z^*=z$}.}
 Then
$z=[y,x]=[y,x]^*=[x^*,y^*]=[ax+by+cz,
dx+ey+fz]=[by,dx]+[ax,ey]=(bd-ae)z$. Thus
\begin{equation}\label{case1_*} bd-ae=1
\end{equation}
From
$x=(x^*)^*=(ax+by+cz)^*=a(ax+by+cz)+b(dx+ey+fz)+cz=(a^2+bd)x+(ab+be)y+(ac+bf+c)z$,
we get
\begin{eqnarray}
& a^2+bd=1, & \label{case1_a}\\
& b(a+e)=0, & \label{case1_b}\\
& ac+bf+c=0. & \label{case1_c}
\end{eqnarray}
From
$y=(y^*)^*=(dx+ey+fz)^*=d(ax+by+cz)+e(dx+ey+fz)+fz=d(a+e)x+(e^2+db)y+(cd+ef+f)z$,
we obtain
\begin{eqnarray}
& e^2+bd=1, & \label{case1_d}\\
& d(a+e)=0, & \label{case1_e}\\
& cd+ef+f=0. & \label{case1_f}
\end{eqnarray}
From \eqref{case1_a} and \eqref{case1_d}, we obtain that $a=\pm e$.

Suppose that $a=e$. Then \eqref{case1_a} and \eqref{case1_*} imply
that $a=e=0$. Thus. this case is contained in the case $a=-e$.

Suppose now that $a=-e$. Then \eqref{case1_a}, \eqref{case1_d} and
\eqref{case1_*} are in fact the same equation. Also the equations
\eqref{case1_b} and \eqref{case1_e} do not give any new information.
Thus \eqref{case1_c} and \eqref{case1_f} equal
\begin{eqnarray}\label{case1_coef}
  \left\{ \begin{array}{rrr} (a+1)c & + & bf=0 \\ dc & +& (1-a)f=0   \end{array}\right.
\end{eqnarray}
Observe that, by \eqref{case1_a}, $\det \begin{pmatrix} a+1 & b \\ d
& 1-a\end{pmatrix}=-a^2-bd+1=0$.

Therefore $x^*=ax+by+cz$, $y^*=dx-ay+fz$, $z^*=z$, where $a,b,c,d,f$
satisfy \eqref{case1_a} and \eqref{case1_coef}. Hence (i) is proved.

\noindent{\underline{Case 2: $z^*=-z$}.}
$-z=[y,x]^*=[x^*,y^*]=[ax+by+cz,
dx+ey+fz]=[by,dx]+[ax,ey]=(bd-ae)z$. Thus
\begin{equation}\label{case2_*} ae-bd=1
\end{equation}
From
$x=(x^*)^*=(ax+by+cz)^*=a(ax+by+cz)+b(dx+ey+fz)-cz=(a^2+bd)x+(ab+eb)y+(ac+bf-c)z$,
we get
\begin{eqnarray}
& a^2+bd=1, & \label{case2_a}\\
& b(a+e)=0, & \label{case2_b}\\
& ac+bf-c=0. & \label{case2_c}
\end{eqnarray}
From
$y=(y^*)^*=(dx+ey+fz)^*=d(ax+by+cz)+e(dx+ey+fz)-fz=d(a+e)x+(e^2+db)y+(cd+ef-f)z$,
we obtain
\begin{eqnarray}
& e^2+bd=1, & \label{case2_d}\\
& d(a+e)=0, & \label{case2_e}\\
& cd+ef-f=0. & \label{case2_f}
\end{eqnarray}
From \eqref{case2_a} and \eqref{case2_d}, we obtain that $a=\pm e$.

It is not possible that $a=-e$  because \eqref{case2_a} and
\eqref{case2_*} would imply that $1=-1$.

Suppose now that $a=e$. Then \eqref{case2_*}, \eqref{case2_a} and
\eqref{case2_d} imply that $a^2=1$. Hence $a=e=\pm 1$.  Now
\eqref{case2_b} and \eqref{case2_e} imply that $b=d=0$.

If $a=-1$,  we obtain, by \eqref{case2_c} and \eqref{case2_f}, that
$f=c=0$. Hence we obtain (iii), i.e. $x^*=-x$, $y^*=-y$  $z^*=-z$.

If $a=1$, \eqref{case2_c} and \eqref{case2_f} do not give any new
information. Hence we obtain (ii), i.e. $x^*=x+cz$, $y^*=y+fz$,
$z^*=-z$, where $c,f\in k$. 
\end{proof}

Let $k$ be a field.
Let $\tau,\eta\colon L\rightarrow L$ be two $k$-involutions of a Lie $k$-algebra $L$.
We say that $\tau$ is \emph{equivalent} to $\eta$ if there exists an isomophism of Lie $k$-algebras
$\varphi\colon L\rightarrow L$ such that $\varphi^{-1}\tau\varphi=\eta$.

\begin{lem}\label{lem:equivalentinvolutionHeisenbergalgebra}
Let $k$ be a field of characteristic different from two and   $H$ be the Heisenberg Lie $k$-algebra. Any
$k$-involution $\tau\colon H\rightarrow H$ is equivalent to one of the following involutions
$\eta\colon H\rightarrow H$. 
\begin{enumerate}[\rm (1)]
	\item The involution
	$\eta\colon H\rightarrow H$ defined by $\eta(x)=x,\, \eta(y)= -y,\,  \eta(z)= z.$
	More precisely, any $k$-involution in
Lemma~\ref{lem:involutionHeisenbergalgebra}{\rm(i)} is equivalent to $\eta$ just defined.

	\item The involution $\eta\colon H\rightarrow H$ defined by $\eta(x)= x,\, \eta(y)= y,\, \eta(z)= -z$.
	More precisely, any $k$-involution in  Lemma~\ref{lem:involutionHeisenbergalgebra}{\rm(ii)}
is equivalent to  $\eta$ just defined.

	\item The principal involution $\eta\colon H\rightarrow H$ defined by $\eta(x)=-x,\, \eta(y)=-y,\, \eta(z)=-z$. 
	\end{enumerate}
Furthermore, we exhibit explicit isomorphisms  $\varphi\colon
H\rightarrow H$ which prove that $\varphi^{-1}\tau\varphi=\eta$  where  $\tau$ is any involution in
Lemma~\ref{lem:involutionHeisenbergalgebra}{\rm(i)} and {\rm(ii)}.
\end{lem}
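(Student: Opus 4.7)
The plan is to go through the three families (i), (ii), (iii) of Lemma~\ref{lem:involutionHeisenbergalgebra} and, in each case, exhibit an explicit Lie automorphism $\varphi\colon H\to H$ with $\varphi^{-1}\tau\varphi=\eta$. Throughout, any such $\varphi$ is determined by $\varphi(x)=\alpha x+\beta y+\gamma z$ and $\varphi(y)=\delta x+\epsilon y+\zeta z$, with $\alpha\epsilon-\beta\delta\neq 0$; then $\varphi(z)=[\varphi(y),\varphi(x)]=(\alpha\epsilon-\beta\delta)z$ is forced. I will use this template throughout.

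Case (iii) is immediate: take $\varphi=\mathrm{id}_H$. Case (ii) is a short calculation: since $\tau$ negates $z$, setting $\varphi(x)=x+\tfrac{c}{2}z$ and $\varphi(y)=y+\tfrac{f}{2}z$ absorbs the offending $cz$ and $fz$ terms, yielding $\tau\varphi(x)=\varphi(x)$, $\tau\varphi(y)=\varphi(y)$, $\tau\varphi(z)=-\varphi(z)$, exactly $\varphi\eta$. The halving is available because $\mathrm{char}\,k\neq 2$.

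Case (i) is the substantive one. The key observation is that, modulo the centre, $\tau$ induces on $H/Z(H)$ the linear map with matrix $A=\bigl(\begin{smallmatrix} a & d\\ b & -a\end{smallmatrix}\bigr)$, and the relation $a^2+bd=1$ says precisely that $A^2=I$. Since $\mathrm{char}\,k\neq 2$, $A$ is diagonalisable over $k$ with eigenvalues $\pm1$ and two distinct one-dimensional eigenspaces. I will write down concrete nonzero eigenvectors: $(\alpha,\beta)=(a+1,b)$ for $+1$ and $(\delta,\epsilon)=(a-1,b)$ for $-1$, falling back on $(-d,a-1)$, respectively $(-d,a+1)$, in the degenerate subcase $b=0$, $a=\mp 1$. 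These eigenvectors for distinct eigenvalues are automatically linearly independent, so the matrix $\bigl(\begin{smallmatrix}\alpha & \beta\\ \delta & \epsilon\end{smallmatrix}\bigr)$ is invertible. Setting $v_+=\alpha x+\beta y+\gamma z$, $v_-=\delta x+\epsilon y+\zeta z$, I then define $\varphi(x)=v_+$, $\varphi(y)=v_-$, choosing $\gamma,\zeta\in k$ so that $\tau(v_+)=v_+$ and $\tau(v_-)=-v_-$; the identity $\tau\varphi(z)=\varphi(z)=\varphi(\eta(z))$ is then automatic because $\tau(z)=z$ and $\varphi(z)$ is a scalar multiple of $z$.

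The remaining task is to verify that the lifts $v_+,v_-$ from $H/Z(H)$ to $H$ can be chosen as claimed. A direct computation, using $\tau(z)=z$ and the eigenvector identities, gives $\tau(v_+)=v_++(\alpha c+\beta f)z$ and $\tau(v_-)=-v_-+(\delta c+\epsilon f+2\zeta)z$. Therefore one needs $\alpha c+\beta f=0$, with $\gamma$ then free, and one sets $\zeta=-(\delta c+\epsilon f)/2$; the latter is always available since $\mathrm{char}\,k\neq 2$. The non-trivial point is that the first equation is not an extra constraint: for $(\alpha,\beta)=(a+1,b)$ it is exactly the relation $(a+1)c+bf=0$ from Lemma~\ref{lem:involutionHeisenbergalgebra}(i), and for the backup $(-d,a-1)$ it becomes $-dc+(a-1)f=0$, which is $-1$ times the relation $dc+(1-a)f=0$. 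The main obstacle is therefore the bookkeeping: making sure every potentially degenerate subcase ($a=\pm 1$, $b=0$, $d=0$) is covered by an eigenvector choice for which the compatibility $\alpha c+\beta f=0$ is witnessed by the constraints of Lemma~\ref{lem:involutionHeisenbergalgebra}(i), rather than being an additional requirement on $\tau$.
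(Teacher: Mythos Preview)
Your proof is correct and follows essentially the same approach as the paper: diagonalise the induced map on $H/Z(H)$ and lift the $\pm 1$ eigenvectors to $H$, using the relations of Lemma~\ref{lem:involutionHeisenbergalgebra}(i) to see that the lift of the $+1$ eigenvector requires no extra condition. The paper simply records the resulting explicit change of basis in three subcases (according as $b\neq 0$, $d\neq 0$, or $b=d=0$) without explaining its origin, whereas you give the conceptual reason and organise the degenerate subcases by which eigenvector formula vanishes; the two case splits are different but cover the same ground.
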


\begin{proof}
Clearly the involution of Lemma~\ref{lem:involutionHeisenbergalgebra}(iii) is the same as the one in (3).

First we prove (2). Let $f\mapsto f^*$ be any involution in Lemma~\ref{lem:involutionHeisenbergalgebra}(ii).
 Suppose that $c,f\in k$ and that $x^*=x+cz$,
 $y^*=y+fz$ and $z^*=-z$. Define $X=\frac{1}{2}(x+x^*)=\frac{1}{2}(2x+cz)$,
 $Y=\frac{1}{2}(y+y^*)=\frac{1}{2}(2y+fz)$ and $Z=z$.
 Note that $X,Y, Z$ form a $k$-basis of $H$ and that $[Y,X]=[y,x]=z=Z$. Thus
 there exists an isomorphism  $\varphi\colon H\rightarrow H$ sending $x\mapsto X$, $y\mapsto Y$ and
 $z\mapsto Z$. Moreover $X^*=X$, $Y^*=Y$ and $Z^*=-Z$, as desired.

Now we prove (1). Let $h\mapsto h^*$ be any involution of 
Lemma~\ref{lem:involutionHeisenbergalgebra}(i).
 Let $a,b,c,d,f\in k$ satisfying the
conditions in Lemma~\ref{lem:involutionHeisenbergalgebra}(i). Hence $x^*=ax+by+cz $,
$y^*=dx-ay+fz$,  $z^*=z$. We consider three cases:
\begin{enumerate}[(I)]
\item $b\neq 0$
\item $d\neq 0$
\item $b=d=0$.
\end{enumerate}

(I) Suppose $b\neq 0$. Define $X=\frac{1+a}{2}x+\frac{b}{2}y+\frac{c}{2}z$, $Y=\frac{1-a}{2}x-\frac{b}{2}y-\frac{c}{2}z$ and $Z=-\frac{b}{2}z$.
Note that $X,Y,Z$ is a $k$-basis of $H$ and that $[Y,X]=Z.$ Thus
there exists an isomorphism of $\varphi\colon H\rightarrow H$ sending  $x\mapsto X$, $y\mapsto
Y$, $z\mapsto Z$. Note that $X^*=X$, $Y^*=-Y$ and $Z^*=Z$, as
desired.

(II) Suppose now that $d\neq 0$. Define $X=\frac{d}{2}x+\frac{1-a}{2}y+\frac{f}{2}z$, $Y=\frac{d}{2}x-\frac{a+1}{2}y+\frac{f}{2}z$ and
$Z=-\frac{d}{2}z$. Note that $X,Y,Z$ is a $k$-basis of $H$ and that $[Y,X]=Z$.
Thus there exists an isomorphism  $\varphi\colon H\rightarrow H$ given by $x\mapsto X$,
$y\mapsto Y$, $z\mapsto Z$. Note that $X^*=X$, $Y^*=-Y$ and $Z^*=Z$,
as desired.

(III) Suppose that $b=d=0$. Then $a^2=1$ and either $c=0$ or $f=0$.
In both cases define $X=-\frac{1+a}{2}x+\frac{1-a}{2}y+\frac{f-c}{2}z$, $Y=\frac{a-1}{2}x+\frac{1+a}{2}y+\frac{c-f}{2}z$, $Z=-az$. It
is not difficult to show that $X,Y,Z$ is a $k$-basis of $H$ and that
$[Y,X]=Z$. Thus there exists an isomorphism  $\varphi\colon H\rightarrow H$ given by
$x\mapsto X$, $y\mapsto Y$, $z\mapsto Z$. Note that $X^*=X$, $Y^*=-Y$
and $Z^*=Z$, as desired.
\end{proof}

The following two results are the Lie algebra version of 
\cite[Lemma~2.3, Proposition~2.4]{FerreiraGoncalvesSanchezFreegroupssymmetric}. 
The proofs are analogous to the ones given there for groups.

\begin{lem}\label{lem:involutionclass2}
Let $k$ be a field of characteristic different from two, and $L$ be a finitely generated nilpotent
Lie $k$-algebra of class $2$ with involution $*\colon L\rightarrow L$, $f\mapsto f^*$.
Then $L$ contains a $*$-invariant Heisenberg Lie $k$-subalgebra $H$ and the
restriction to $H$ is one of the involutions in Lemma~\ref{lem:equivalentinvolutionHeisenbergalgebra}. More precisely,
there exist $x,y\in L$ such that $[y,x]\neq0$,
$[y,[y,x]]=[x,[y,x]]=0$, and either
$x^*=x,\ y^*=-y$, or $x^*=x,\ y^*=y$, or $x^*=-x,\ y^*=-y$.
\end{lem}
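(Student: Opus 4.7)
The plan is to use the eigenspace decomposition of $L$ under $*$ and then pick the Heisenberg subalgebra by a three-way case split according to where the nonvanishing brackets live.

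Since $\operatorname{char}(k)\neq 2$, each $a\in L$ decomposes as $a=\tfrac12(a+a^*)+\tfrac12(a-a^*)$, so $L=L^+\oplus L^-$ with $L^\pm=\{a\in L: a^*=\pm a\}$. The identity $[a,b]^*=[b^*,a^*]$ immediately gives
\begin{equation*}
[L^+,L^+]\subseteq L^-,\qquad [L^+,L^-]\subseteq L^+,\qquad [L^-,L^-]\subseteq L^-.
\end{equation*}
Because $L$ is nilpotent of class exactly $2$, we have $[L,[L,L]]=0$ and $[L,L]\neq 0$; in particular $[L,L]\subseteq Z(L)$.

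Now I split into three cases according to which of the three summands of $[L,L]=[L^+,L^+]+[L^+,L^-]+[L^-,L^-]$ is nonzero. If $[L^+,L^-]\neq 0$, I pick $x\in L^+$, $y\in L^-$ with $[y,x]\neq 0$, obtaining $x^*=x$ and $y^*=-y$. Otherwise, if $[L^+,L^+]\neq 0$, I pick $x,y\in L^+$ with $[y,x]\neq 0$, obtaining $x^*=x$ and $y^*=y$. In the remaining case, $[L^+,L^-]=[L^+,L^+]=0$ forces $[L^-,L^-]\neq 0$ (since $[L,L]\neq 0$), and I pick $x,y\in L^-$ with $[y,x]\neq 0$, obtaining $x^*=-x$ and $y^*=-y$. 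In each case class $2$ automatically gives $[x,[y,x]]=[y,[y,x]]=0$.

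It remains to verify that $H:=kx+ky+k[y,x]$ is a $3$-dimensional $*$-invariant Heisenberg subalgebra. Linear independence of $x,y$ follows from $[y,x]\neq 0$; and if $[y,x]=\alpha x+\beta y$ then $0=[y,[y,x]]=\alpha[y,x]$ and $0=[x,[y,x]]=-\beta[y,x]$ force $\alpha=\beta=0$, so $\dim H=3$. Then $[H,H]=k[y,x]$ is one-dimensional and central in $H$, so by the characterization recalled after \eqref{eq:Heisenberg} the subalgebra $H$ is isomorphic to the Heisenberg Lie $k$-algebra. The $*$-invariance is clear since $x^*,y^*\in\{\pm x,\pm y\}\subseteq H$ and $[y,x]^*=[x^*,y^*]=\pm[x,y]\in H$, and by construction the restriction of $*$ to $H$ is precisely one of the three normal forms listed in Lemma~\ref{lem:equivalentinvolutionHeisenbergalgebra}. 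The argument is essentially formal; the only mild subtlety is the dimension count, and in fact the finite-generation hypothesis is not needed.
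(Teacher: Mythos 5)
Your proof is correct, and it takes a genuinely different route from the paper's. The paper passes to the quotient $L/C$ by the center, diagonalizes the induced order-two map there (char $\neq 2$), picks two eigenvectors $u_1+C,u_2+C$ with $[u_1,u_2]\neq 0$, and then corrects the lifts by central elements, replacing $u_i$ by $\tfrac12(u_i\pm u_i^*)$, to obtain genuine $\pm$-eigenvectors of $*$ in $L$; the four sign patterns are then treated one by one. You instead diagonalize $*$ on $L$ itself, writing $L=L^+\oplus L^-$, and use bilinearity of the bracket to force one of $[L^+,L^+]$, $[L^+,L^-]$, $[L^-,L^-]$ to be nonzero, which hands you a noncommuting pair of exact eigenvectors directly; class $2$ (i.e.\ $[L,L]\subseteq Z(L)$) then gives the Heisenberg relations and the dimension count for $H=kx+ky+k[y,x]$, which you verify carefully. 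Your version avoids both the passage to $L/C$ and the correction-by-central-elements step, and, as you note, it makes the finite-generation hypothesis superfluous (the paper only needs it through the reduction performed in Proposition~\ref{prop:involutionnilpotent} anyway). What the paper's formulation buys is mainly parallelism with the group-theoretic argument of \cite[Lemma~2.3]{FerreiraGoncalvesSanchezFreegroupssymmetric} that it is adapted from, where working modulo the center is the natural move; mathematically your three-case split reaches the same three normal forms of Lemma~\ref{lem:equivalentinvolutionHeisenbergalgebra}, with $z^*=z$, $z^*=-z$, $z^*=-z$ respectively, exactly as required.
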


\begin{proof}
Let $C$ denote the center of $L$. It follows from the nilpotency class of $L$
that $L/C$ is a finitely generated torsion-free abelian Lie $k$-algebra and the involution
induces an automorphism of  $k$-vector spaces $\varphi\colon L/C\rightarrow L/C$, $f+C\mapsto f^*+C$.
Notice that $L/C$ has dimension at least two because $L$ is not abelian. Since $\varphi^2$ is
the identity, $\varphi$ is diagonalizable. There exist $u_1,\dotsc,u_n\in L$ such 
that $\{u_1+C,\dotsc,u_n+C\}$ is a basis of $L/C$ consisting of eigenvectors with eigenvalues 
$\pm 1$. Since $L$ is not abelian, we may suppose that $[u_1,u_2]\neq 0$.
Hence there exist $z_1,z_2\in C$ such that $u_i^*=\varepsilon_iu_i+z_i$,
where $\varepsilon_i\in\{1,-1\}$, for $i=1,2$.

Suppose that $u_1^*=-u_1+z_1$ and $u_2^*=u_2+z_2$. 
Let $H$ be the subalgebra with basis $x=\frac{1}{2}(u_1- u_1^*)=u_1-\frac{1}{2}z_1$,
$y=\frac{1}{2}(u_2+u_2^*)=u_2+\frac{1}{2}z_2$ and $z=[y,x]$. Now proceed as in the previous case.

Suppose that $u_1^*=u_1+z_1$ and $u_2^*=-u_2+z_2$. 
Let $H$ be the subalgebra with basis $x=\frac{1}{2}(u_2- u_2^*)=u_2-\frac{1}{2}z_2$,
$y=\frac{1}{2}(u_1+u_1^*)=u_2+\frac{1}{2}z_2$ and $z=[y,x]$.
Clearly $z$ commutes with $x$ and $y$ because it is an element of $C$.
Now $x^*=-x$, $y^*=y$ and
$z^*=[y,x]^*=[x^*,y^*]=[-x,y]=z$. Thus 
$*$, when restricted to $H$, is the involution (1) in Lemma~\ref{lem:involutionHeisenbergalgebra}.

Suppose that $u_1^*=u_1+z_1$ and $u_2^*=u_2+z_2$. Let $H$
be the subalgebra with basis $x=\frac{1}{2}(u_1+ u_1^*)=u_1+\frac{1}{2}z_1$,
$y=\frac{1}{2}(u_2+u_2^*)=u_2+\frac{1}{2}z_2$ and $z=[y,x]$. 
Clearly $z$ commutes with $x$ and $y$ because it is an element of $C$. Now $x^*=x$, $y^*=y$ and
$z^*=[y,x]^*=[x^*,y^*]=[x,y]=-z$.
Thus $*$, when restricted to $H$, is the involution (2) in Lemma~\ref{lem:involutionHeisenbergalgebra}.

Suppose that $u_1^*=-u_1+z_1$ and $u_2^*=-u_2+z_2$. Let $H$
be the subalgebra with basis $x=\frac{1}{2}(u_1- u_1^*)=u_1-\frac{1}{2}z_1$,
$y=\frac{1}{2}(u_2-u_2^*)=u_2-\frac{1}{2}z_2$ and $z=[y,x]$. Clearly $z$ commutes with $x$ and $y$ because it is an element of $C$. Now $x^*=-x$, $y^*=-y$ and
$z^*=[y,x]^*=[x^*,y^*]=[-x,-y]=-z$. Thus $*$,
when restricted to $H$, is the involution (3) in Lemma~\ref{lem:involutionHeisenbergalgebra}
\end{proof}

\begin{prop}\label{prop:involutionnilpotent}
Let $k$ be a field of characteristic different from two, and $L$ be a non-abelian nilpotent
Lie $k$-algebra with involution $*\colon L\rightarrow L$, $f\mapsto f^*$.
Then $L$ contains a $*$-invariant Heisenberg Lie $k$-subalgebra $H$ such
that the restriction to $H$ is one of the involutions in Lemma~\ref{lem:equivalentinvolutionHeisenbergalgebra}. More precisely,
there exist $x,y\in L$ such that $[y,x]\neq0$,
$[y,[y,x]]=[x,[y,x]]=0$, and either
$x^*=x,\ y^*=-y$, or $x^*=x,\ y^*=y$, or $x^*=-x,\ y^*=-y$.
\end{prop}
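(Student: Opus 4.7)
My plan is to split on the nilpotency class $c$ of $L$. When $c=2$, the result follows from Lemma~\ref{lem:involutionclass2}: any non-commuting pair $u,v \in L$ together with $u^*, v^*$ generates a finitely generated $*$-invariant non-abelian subalgebra of class $\leq 2$, to which the lemma applies. The substantive case is $c \geq 3$, which I treat by an explicit construction using the last two layers of the lower central series.

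Assume $c \geq 3$. The identity $[a,b]^* = [b^*,a^*]$ implies by induction that every $L^i$ is $*$-stable. In particular $L^c \subseteq L^{c-1} \subseteq L$ and the quotients $L/L^{c-1}$, $L^{c-1}/L^c$, $L^c$ all inherit involutions. Because these quotients are abelian, the induced involution is $k$-linear on each, and since $\mathrm{char}(k) \neq 2$ each decomposes into $(+1)$- and $(-1)$-eigenspaces. The Lie bracket factors through a bilinear pairing
\[
\pi\colon L/L^{c-1} \otimes L^{c-1}/L^c \longrightarrow L^c, \qquad (\bar a, \bar b) \longmapsto [a,b],
\]
well-defined because $[L, L^c] \subseteq L^{c+1} = 0$ and $[L^{c-1}, L^{c-1}] \subseteq L^{2c-2} = 0$ (both requiring $c \geq 3$), and nonzero because $[L, L^{c-1}] = L^c \neq 0$. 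A short computation using the same identities shows $\pi$ sends the product of the $i$- and $j$-eigenspaces into the $(-ij)$-eigenspace of $L^c$.

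Consequently I can pick $i,j \in \{\pm 1\}$ together with lifts $a \in L$, $b \in L^{c-1}$ whose classes lie in the $i$- and $j$-eigenspaces of $L/L^{c-1}$ and $L^{c-1}/L^c$ respectively, and with $[a,b] \neq 0$. Averaging, set
\[
A = \tfrac12(a + i\, a^*), \qquad B = \tfrac12(b + j\, b^*);
\]
then $A^* = iA$, $B^* = jB$, $A - a \in L^{c-1}$, and $B - b \in L^c$. Expanding $[A,B]$ and discarding the three cross terms via $[L^{c-1}, L^{c-1}] = [L^{c-1}, L^c] = [L, L^c] = 0$ yields $[A,B] = [a,b] \neq 0$; moreover this bracket lies in the central subspace $L^c$. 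Hence $A$, $B$ and $[A,B]$ span a $*$-invariant Heisenberg $k$-subalgebra of $L$. Finally, relabelling $(x,y) := (A,B)$ realizes case (ii) when $(i,j) = (1,1)$, case (i) when $(i,j) = (1,-1)$, and case (iii) when $(i,j) = (-1,-1)$; the remaining case $(i,j) = (-1,1)$ produces case (i) after the swap $(x,y) := (B,A)$. The main obstacle — and the reason for isolating the class-$2$ case — is that the averaging step preserves the bracket $[a,b]$ only by virtue of $[L^{c-1}, L^{c-1}] = [L, L^c] = 0$, identities that fail precisely when $c = 2$.
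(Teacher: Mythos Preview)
Your proof is correct and takes a genuinely different route from the paper's.

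The paper argues by induction on the nilpotency class $c$: for $c>2$ it passes to $L/C$ (with $C$ the center), invokes the inductive hypothesis to find a Heisenberg image, pulls back to a $*$-invariant subalgebra $M$ of class $\le 3$, and in the class-$3$ case builds a further class-$2$ $*$-invariant subalgebra to which Lemma~\ref{lem:involutionclass2} applies. Your argument bypasses the induction entirely: you go straight to the bottom two layers $L^{c-1}/L^c$ and $L^c$, exploit the bilinear pairing $\pi$ together with the eigenspace decomposition to locate eigenvector lifts $a,b$ with $[a,b]\neq 0$, and average to produce $A,B$ spanning the desired Heisenberg directly. The key vanishing identities $[L,L^c]=[L^{c-1},L^{c-1}]=0$ (for $c\ge 3$) are exactly what makes the averaging preserve $[a,b]$, and you correctly flag this as the reason the class-$2$ case must be handled separately. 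Your approach is shorter and more explicit; the paper's approach mirrors the group-theoretic argument it is adapting from \cite{FerreiraGoncalvesSanchezFreegroupssymmetric}, which may explain its structure.
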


\begin{proof}
By taking the subalgebra of $L$ generated by two noncommuting elements and their
images by $*$, we can assume that $L$ is finitely generated.

We shall argue by induction on the nilpotency class $c$ of $L$; the case $c=2$
having been dealt with in Lemma~\ref{lem:involutionclass2}.

Suppose that $c>2$ and let $C$ denote the center of $L$. Then $L/C$
is a nonabelian finitely generated nilpotent Lie algebra of class $c-1$, with
an involution induced by $*$. By the induction hypothesis, there exist $x,y\in L$
such that $\{x+C,y+C\}$ generate a $*$-invariant Heisenberg Lie subalgebra of $L/C$.
Moreover $x^*+C=\varepsilon x+C$ and $y^*+C=\eta y+C$ with $\varepsilon,\eta\in\{1,-1\}$
and $z=[y,x]\notin C$, $[y,z],[x,z]\in C$. It follows that $M$, the
subalgebra of $L$ generated by $\{x,y,C\}$, is a $*$-invariant subalgebra of $L$
of nilpotency class $\leq3$. 

If $M$ has class 2, then the result follows from Lemma~\ref{lem:involutionclass2}.

Suppose that $M$ has class 3. Then $[x,z]\neq 0$ or $[y,z]\neq 0$. Say $[x,z]\neq 0$.
We shall show that the $k$-subalgebra $K$ generated by $\{x,x^*,z,z^*\}$ is a $*$-invariant
subalgebra of $L$ of class $2$. It will be enough to show that $[\alpha,\beta]$
lies in the center of $K$ for all $\alpha,\beta\in \{x,x^*,z,z^*\}$. For each $n\geq 1$,
let $\gamma_n(M)$ denote the $n$-th term in the lower central series of $M$. Now,
$z=[y,x]\in\gamma_2(M)$, so $z^*\in\gamma_2(M)$, because the terms in the lower central
series are fully invariant subgroups of $G$. It follows that for every $\alpha\in K$ and
$\beta\in \{z,z^*\}$ we haver $[\alpha,\beta]\in\gamma_3(M)$, which is a central subalgebra
of $M$, hence $[\alpha,\beta]$ is central in $K$. Finally, that $[x,x^*]=0$ follows
from the fact that $x^*-x\in C$. So $K$ is indeed a $*$-invariant subalgebra of $L$ of class $2$.
Hence Lemma~\ref{lem:involutionclass2} applies to it.
\end{proof}

\section{Free group algebras in the Ore ring of fractions of universal enveloping  algebras
of nilpotent Lie algebras}\label{sec:nilpotentOre}

Let $k$ be a field of characteristic zero and $H$ be the Heisenberg Lie $k$-algebra.
In this section we find different free (group) $k$-subalgebras in $\mathfrak{D}(H)$,
the Ore ring of fraction of the universal enveloping algebra $U(H)$ of $H$.
For that, our main tool is the result by G. Cauchon~\cite[Th\'eor\`eme]{Cauchoncorps}. 
The technique to obtain suitable free algebras from the paper of Cauchon was developed in
\cite[Section~3]{FerreiraGoncalvesSanchez2}. In some cases, we then use Theorem~\ref{theo:freegroupgradedOre}
to obtain free group algebras in $\mathfrak{D}(H)$. If $L$ is a nilpotent Lie $k$-algebra,  
applying Proposition~\ref{prop:involutionnilpotent}, 
the foregoing implies the existence of free group algebras in $\mathfrak{D}(L)$, the Ore ring of fractions
of the universal enveloping algebra $U(L)$ of $L$.

\medskip

Let $k$ be a field of characteristic different from two. Let $K=k(t)$ be the field of fractions of the
polynomial ring $k[t]$ in the variable $t$. Let $\sigma$ be a
$k$-automorphism of $K$ of \emph{infinite order}. We will consider the skew polynomial ring
$K[p;\sigma]$. The elements of $K[p;\sigma]$ are ``right
polynomials'' of the form $\sum_{i=0}^n p^ia_i$, where the
coeficients $a_i$ are in $K$. The multiplication is determined by
$$ap=p\sigma(a) \quad \textrm{for all } a\in K.$$
It is known that $K[p;\sigma]$ is a noetherian domain and therefore
it has an Ore division ring of fractions $D=K(p;\sigma)$.

Since $\sigma$ is an automorphism of $K$,
$\sigma(t)=\frac{at+b}{ct+d}$ where $M=\left(\begin{smallmatrix} a & b \\
c & d
\end{smallmatrix}\right)\in GL_2(k)$ defines a homography $h$ of the
projective line $\Delta=\mathbb{P}_1(k)=k\cup\{\infty\}$, $h\colon
\Delta \rightarrow \Delta,$ $z\mapsto h(z)=\frac{az+b}{cz+d}$.

We denote by $\mathcal{H}=\{h^n\mid n\in\mathbb{Z} \}$ the subgroup
of the projective linear group $PGL_2(k)$ generated by $h$. The
group $\mathcal{H}$ acts on $\Delta$. If $z\in\Delta$, we denote by
$\mathcal{H}\cdot z=\{h^n(z)\mid n\in\mathbb{Z}\}$ the orbit of $z$
under the action of $\mathcal{H}$.

\begin{theo}[Cauchon's Theorem]
Let $\alpha$ and $\beta$ be two elements of $k$ such that the orbits
$\mathcal{H}\cdot \alpha$ and $\mathcal{H}\cdot \beta$ are infinite
and different. Let $s$ and $u$ be the two elements of $D$ defined by
$$s=(t-\alpha)(t-\beta)^{-1}, \quad u=(1-p)(1+p)^{-1}.$$
If the characteristic of $k$ is different from $2$, then the
$k$-subalgebra $\Omega$ of $D$ generated by $\xi=s$,
$\eta=usu^{-1}$, $\xi^{-1}$ and $\eta^{-1}$ is the free group
$k$-algebra on the set $\{\xi,\eta\}$. \qed
\end{theo}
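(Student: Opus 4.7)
The statement is Cauchon's original theorem, cited to \cite{Cauchoncorps}; I will sketch how I would reprove it using the valuation and graded tools developed in Section~\ref{sec:freegroupalgebrasdivision}. My plan is to (i) introduce a valuation on $D = K(p;\sigma)$ that isolates the algebraic content of the homography action, (ii) compute the leading behaviour of $\xi$ and $\eta$ under this valuation, and (iii) invoke one of Theorem~\ref{theo:freegroupgradedOre}, Proposition~\ref{prop:freeobjecthomogeneous}, or Theorem~\ref{coro:divisionrings} to conclude.

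The natural first choice is the $p$-adic valuation $v$ on $K[p;\sigma]$ given by $v\bigl(\sum_i p^i a_i\bigr) = \min\{i : a_i \neq 0\}$, which extends to a valuation on $D$ via Lemma~\ref{lem:gradedOre}(1). Since $u = (1-p)(1+p)^{-1} = 1 - 2p + 2p^2 - \cdots$ satisfies $v(u-1) = 1$, a direct computation using $ps = \sigma(s)p$ yields $\eta - \xi = 2\bigl(s - \sigma(s)\bigr)p + O(p^2)$. The orbit hypothesis $|\mathcal{H}\cdot \alpha| = \infty$ forces $\sigma(s) \neq s$, so this perturbation is nontrivial, and one checks inductively that the higher order $p$-adic coefficients of products of the form $\xi^{m_1}\eta^{n_1}\cdots\xi^{m_r}\eta^{n_r}$ involve differences of the form $\sigma^j(\alpha) - \sigma^\ell(\beta)$. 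This is where the two orbit hypotheses on $\alpha$ and $\beta$ enter: infinite orbits produce infinitely many distinct such differences, and distinct orbits guarantee nonvanishing.

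To conclude, I would combine this valuation picture with a ping-pong argument on $\Delta = \mathbb{P}_1(k)$. The Möbius transformation associated to $s$ (sending $\alpha \mapsto 0$ and $\beta \mapsto \infty$) and its $\sigma$-conjugates have pairwise distinct pairs of fixed points precisely because the two orbits $\mathcal{H}\cdot\alpha$ and $\mathcal{H}\cdot\beta$ are infinite and disjoint; this yields freeness of the multiplicative group $\langle \xi,\eta\rangle$. To upgrade group-freeness to free-group-algebra-freeness, I would exhibit an auxiliary valuation on $D$ (for example a composite of the $p$-adic valuation above with a suitable $(t-\beta)$-adic valuation on $K$) under which the distinct reduced group words in $\xi^{\pm 1},\eta^{\pm 1}$ have pairwise distinct leading terms in $\gr_\upsilon(D)$. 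Then, after writing $\xi = 1 + (\xi - 1)$ and $\eta = 1 + (\eta - 1)$ and shifting if necessary so the new generators have positive valuation, Theorem~\ref{coro:divisionrings} (or Proposition~\ref{prop:freeobjecthomogeneous}) gives the desired free group $k$-algebra.

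The delicate step, and the main obstacle, is the last one: producing a single valuation on $D$ whose associated graded ring simultaneously records the noncommutativity from $\sigma$ and the arithmetic independence coming from the distinct orbits. Cauchon's original argument in \cite{Cauchoncorps} avoids this by working directly with explicit normal forms in $K[p;\sigma]$ and showing, via a careful combinatorial computation, that any putative nontrivial relation among group words in $\xi$ and $\eta$ would force an algebraic relation between the orbits $\mathcal{H}\cdot\alpha$ and $\mathcal{H}\cdot\beta$, contradicting the hypothesis. Any reproof along the lines I have outlined must, at some point, reproduce the content of this combinatorial step.
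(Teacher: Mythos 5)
This statement is not proved in the paper at all: it is quoted verbatim from Cauchon's article \cite{Cauchoncorps} and closed with \qed as an imported result, so there is no internal proof to match your sketch against. Judged as a proof attempt, your proposal has a genuine gap, and you in fact concede it yourself: the entire content of the theorem is concentrated in the step you defer, namely showing that the distinct reduced group words in $\xi^{\pm1},\eta^{\pm1}$ are $k$-linearly independent in $D$ (equivalently, exhibiting a valuation or normal-form argument under which their leading terms are pairwise distinct). Nothing in your sketch constructs such a valuation, and the $p$-adic leading-term computation you do carry out (the expansion $u=1-2p+2p^2-\cdots$ and the first-order formula for $\eta-\xi$) only shows $\eta\neq\xi$; it gives no control over arbitrary products $\xi^{m_1}\eta^{n_1}\cdots\xi^{m_r}\eta^{n_r}$, where cancellations among the coefficients $\sigma^{j}(s)^{\pm1}$ are exactly what must be excluded using the hypothesis that the two orbits are infinite and distinct.

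The intermediate ping-pong step is also not well founded as stated. The homography $h$ on $\Delta=\mathbb{P}_1(k)$ is attached to $\sigma$, not to $s$, and the multiplicative subgroup of $D$ generated by $\xi$ and $\eta$ has no evident action on $\Delta$: the ``M\"obius transformation associated to $s$'' is just the rational function $s(t)$ viewed as a map of $\Delta$, and conjugation by $u$ in $D$ does not correspond to composing such maps, so the claimed pairwise-distinct fixed points do not translate into a table-tennis argument for freeness of $\langle\xi,\eta\rangle$. Moreover, even granting group-freeness, the upgrade to a free group \emph{algebra} is precisely the linear-independence statement above and does not follow from Theorem~\ref{coro:divisionrings} or Proposition~\ref{prop:freeobjecthomogeneous} without the separating valuation you have not produced. (A minor point: with the convention $ap=p\sigma(a)$ one has $ps=\sigma^{-1}(s)p$ rather than $ps=\sigma(s)p$, though this does not affect the substance.) In short, your outline reduces the theorem to Cauchon's combinatorial core without reproving it, which is why the paper simply cites \cite{Cauchoncorps} rather than arguing along these lines.
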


We will need the following consequence of Cauchon's Theorem.

\begin{prop}\label{prop:freealgebrainWeyl}
Let $k$ be a field of  characteristic zero and $K=k(t)$ be the field of
fractions of the polynomial ring $k[t]$. Let $\sigma\colon
K\rightarrow K$ be the automorphism of $k$-algebras determined by
$\sigma(t)=t-1$. Consider the skew polynomial ring $K[p;\sigma]$ and
its Ore division ring of fractions $K(p;\sigma)$.
Set $s=(t-\frac{5}{6})(t-\frac{1}{6})^{-1}$,  $u=(1-p^2)(1+p^2)^{-1}$ and
$u_1=(1-p^3)(1+p^3)^{-1}$. The following hold true.

\begin{enumerate}[\rm(1)]
\item  The
$k$-subalgebra of $K(p;\sigma)$ generated by $\{s,\, s^{-1},\ usu^{-1},\, us^{-1}u^{-1}\}$ is the free
group $k$-algebra on the set
$\{s,\ usu^{-1}\}$.

\item  The $k$-subalgebra of $K(p;\sigma)$ generated by $\{s+s^{-1},\
u(s+s^{-1})u^{-1}\}$ is the free $k$-algebra on the set
$\{s+s^{-1},\ u(s+s^{-1})u^{-1}\}$.

\item The $k$-subalgebra of $K(p;\sigma)$ generated by $\{s+s^{-1},\
u_1(s+s^{-1})u_1^{-1}\}$ is the free $k$-algebra on the set
$\{s+s^{-1},\ u_1(s+s^{-1})u_1^{-1}\}$.
\end{enumerate}
\end{prop}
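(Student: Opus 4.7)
The plan is to deduce all three parts from Cauchon's theorem applied to suitable sub-skew-polynomial rings of $K[p;\sigma]$, combined with a symmetric-elements argument inside a free group $k$-algebra.

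For part (1), observe that $u=(1-p^2)(1+p^2)^{-1}$ lies in the sub-skew-polynomial ring $K[P;\sigma^2]\subset K[p;\sigma]$ with $P=p^2$. Since $\sigma^2(t)=t-2$, the associated homography is $h^2(z)=z-2$; the $\langle h^2\rangle$-orbits of $\tfrac56$ and $\tfrac16$ are infinite (as $\operatorname{char}k=0$) and disjoint (since $\tfrac56-\tfrac16=\tfrac23\notin 2\mathbb{Z}$). Cauchon's theorem applied to $K[P;\sigma^2]$ then yields that $\{s,s^{-1},usu^{-1},us^{-1}u^{-1}\}$ generates a free group $k$-algebra on $\{s,usu^{-1}\}$ inside $K(P;\sigma^2)\subseteq K(p;\sigma)$. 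The same argument with $p^3,\sigma^3,u_1$ in place of $p^2,\sigma^2,u$ shows that $\{s,u_1 s u_1^{-1}\}$ freely generates a free group $k$-algebra in $K(p;\sigma)$: the orbits of $\tfrac56$ and $\tfrac16$ under $h^3(z)=z-3$ are disjoint because $\tfrac23\notin 3\mathbb{Z}$.

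Parts (2) and (3) then both reduce to the following claim: if $\xi,\eta$ freely generate a free group $k$-algebra inside some $k$-algebra $R$, then $\xi+\xi^{-1}$ and $\eta+\eta^{-1}$ freely generate a free associative $k$-subalgebra of $R$. To prove this, identify the free group $k$-algebra with $k[F_2]$, $F_2=\langle X,Y\rangle$, and filter $k[F_2]$ by reduced-word length, $F_n k[F_2]=k\text{-span}\{w\in F_2\colon |w|\leq n\}$. Set $\alpha=X+X^{-1}$ and $\beta=Y+Y^{-1}$. Using the expansion $(X+X^{-1})^a=\sum_{j=0}^{a}\binom{a}{j}X^{a-2j}$, one checks that a reduced alternating monomial $M=\alpha^{a_1}\beta^{b_1}\cdots\alpha^{a_r}\beta^{b_r}$ with $a_i,b_i>0$ has length $\ell(M)=a_1+b_1+\cdots+a_r+b_r$, and its image in $F_{\ell(M)}/F_{\ell(M)-1}$ is a sum of the $2^{2r}$ distinct reduced words $X^{\varepsilon_1 a_1}Y^{\delta_1 b_1}\cdots X^{\varepsilon_r a_r}Y^{\delta_r b_r}$ ($\varepsilon_i,\delta_i\in\{\pm1\}$), all other summands from the expansion having strictly smaller length. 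Distinct reduced monomials in $\alpha,\beta$ yield disjoint sets of such top-length words, since from a reduced word of the above form the exponent sequence $(a_1,b_1,\ldots,a_r,b_r)$ is uniquely recoverable. Hence any nontrivial $k$-linear combination $\sum c_MM$ has a nonvanishing top-length component and is therefore nonzero, which establishes the claim.

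The main obstacle is the symmetric-elements claim; its proof is short but depends on the combinatorial fact that the exponent sequence of a reduced alternating monomial in $\alpha,\beta$ can be read off any one of its top-length reduced-word summands. The remaining content is direct arithmetic verification of Cauchon's hypotheses over $\mathbb{Q}\subseteq k$.
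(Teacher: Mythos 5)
Your proof is correct and follows essentially the same route as the paper: apply Cauchon's Theorem to the subrings $K[p^2;\sigma^2]$ and $K[p^3;\sigma^3]$ (with the same verification that the orbits of $\tfrac{5}{6}$ and $\tfrac{1}{6}$ are infinite and distinct), obtaining free group $k$-algebras on $\{s,\,usu^{-1}\}$ and $\{s,\,u_1su_1^{-1}\}$ inside $K(p;\sigma)$, and then pass to free algebras on the symmetrized generators $s+s^{-1}$ and $u(s+s^{-1})u^{-1}$ (resp.\ $u_1(s+s^{-1})u_1^{-1}$). The only divergence is that the paper handles this last step by citing Lemma~\ref{coro:freeinsidegroupring} (quoted from an earlier paper), whereas you prove that statement directly by a leading-term argument in the word-length filtration of the group algebra of the free group of rank two; that argument is sound — the top-length component of a monomial $\alpha^{a_1}\beta^{b_1}\cdots$ is the sum of the $2^{2r}$ sign-choice words, distinct monomials of the same length have disjoint top-word sets, and shorter monomials do not contribute — and it extends verbatim to monomials beginning with a $\beta$-block or ending with an $\alpha$-block, a case your notation omits but which changes nothing. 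So your version is a self-contained variant of the paper's proof, at the cost only of reproving the cited combinatorial lemma.
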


\begin{proof}
We will apply Cauchon's Theorem to the skew polynomial ring
$K[p^2;\sigma^2]$, where $\sigma^2\colon K\rightarrow K$ is given by
$\sigma^2(t)=t-2$.

Let $\alpha=\frac{5}{6}\in k$ and $\beta=\frac{1}{6}$. Let
$\mathcal{H}$ be defined as above. Consider the orbits
$\mathcal{H}\cdot\alpha=\{\frac{5}{6}-2n\mid n\in\mathbb{Z}\}$,
$\mathcal{H}\cdot\beta=\{\frac{1}{6}-2n\mid n\in\mathbb{Z}\}$ which
are infinite and different.

Then, by Cauchon's Theorem, $s=(t-\alpha)(t-\beta)^{-1}$ and
$u=(1-p^2)(1+p^2)^{-1}$ are such that the $k$-algebra generated by
$\xi=s, \eta=usu^{-1}$, $\xi^{-1}$ and $\eta^{-1}$ is the free group
$k$-algebra on the free generators $\{\xi,\eta\}$. Thus (1) is proved.

By Corollary~\ref{coro:freeinsidegroupring}, the $k$-algebra
generated by  $\{s+s^{-1},\ u(s+s^{-1})u^{-1}\}$ is the free
$k$-algebra on the set $\{s+s^{-1},\ u(s+s^{-1})u^{-1}\}$. Thus (2) is proved.

In order to prove (3), apply Cauchon's Theorem to the skew polynomial ring
$K[p^3;\sigma^3]$, where $\sigma^3\colon K\rightarrow K$ is given by
$\sigma^3(t)=t-3$. Then proceed as in (1) and (2).
\end{proof}

The following lemma is well known. For example, it appears in \cite[Section~3.3]{FerreiraGoncalvesSanchez2}.

\begin{lem}\label{lem:specializationtoWeyl}
Let $k$ be a field of characteristic zero. Let $K=k(t)$ be the field of fractions of the polynomial
ring $k[t]$ and $\sigma\colon K\rightarrow K$ be the automorphism of $k$-algebras determined by
$\sigma(t)=t-1$. Consider the skew polynomial ring $K[p;\sigma]$ and its Ore ring of fractions
$K(p;\sigma)$. Let $H=\langle x,y\mid [[y,x],x]=[[y,x],y]=0 \rangle$ be the Heisenberg Lie $k$-algebra, set $z=[y,x]$
and consider the universal enveloping algebra $U(H)$ of $H$.
. The following hold true
\begin{enumerate}[\rm(1)]
	\item Set $I=U(H)(z-1)$, the ideal of $U(H)$
	generated by $z-1$. The set $\mathfrak{S}=U(H)\setminus I$ is a left Ore subset of $U(H)$.
	\item There exists a surjective $k$-algebra homomorphism $\Phi\colon \mathfrak{S}^{-1}U(H)\rightarrow K(p;\sigma)$
	such that $\Phi(y)=p$, $\Phi(x)=p^{-1}t$ and $\Phi(z)=1$.
	\end{enumerate}
\end{lem}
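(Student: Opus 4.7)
The plan is to identify the quotient $U(H)/I$ with the first Weyl algebra $A_1(k)$ and to use the standard embedding of $A_1(k)$ into $K(p;\sigma)$.

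For (1), I first observe that $z = [y,x]$ is central in $U(H)$ (since $[x,z] = [y,z] = 0$ in $H$), so $z-1$ is central and $I = U(H)(z-1) = (z-1)U(H)$ is a two-sided ideal. By the PBW theorem, $U(H)$ is free as a $k[z]$-module with basis $\{y^j x^i\}$, so $U(H)/I$ has $k$-basis $\{y^j x^i\}$ subject to the single relation $yx - xy = 1$; that is, $U(H)/I \cong A_1(k)$, the first Weyl algebra. Since $A_1(k)$ is a simple Noetherian domain, Goldie's theorem gives that $A_1(k)\setminus\{0\}$ is a left Ore subset of $A_1(k)$. To conclude (1), I appeal to the general fact that for a Noetherian ring and a prime ideal generated by a central regular element whose quotient is an Ore domain, the complementary multiplicative set is itself left Ore.

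For (2), I define $\psi\colon A_1(k) \to K(p;\sigma)$ on the Weyl generators by $Y\mapsto p$ and $X\mapsto p^{-1}t$. Well-definedness is the one-line check
\[
\psi(Y)\psi(X) - \psi(X)\psi(Y) = p\cdot p^{-1}t - p^{-1}t\cdot p = t - p^{-1}(p\sigma(t)) = t-(t-1) = 1.
\]
As $A_1(k)$ is simple and $\psi$ is nonzero, $\psi$ is injective, and therefore extends to an injection $\tilde\psi\colon Q_{cl}(A_1(k)) \hookrightarrow K(p;\sigma)$. Its image is a division subring of $K(p;\sigma)$ containing $p$ and $t = p\cdot(p^{-1}t)$, so it contains $k(t) = K$, then $K[p;\sigma]$, and hence all of $Q_{cl}(K[p;\sigma]) = K(p;\sigma)$; thus $\tilde\psi$ is an isomorphism. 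Precomposing with the canonical projection $U(H)\to A_1(k)$ and invoking the universal property of Ore localization (available by (1)), I obtain the required surjective $k$-algebra map $\Phi\colon \mathfrak{S}^{-1}U(H)\to K(p;\sigma)$ with $\Phi(y) = p$, $\Phi(x) = p^{-1}t$ and $\Phi(z) = 1$.

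The main obstacle is justifying the Ore condition in (1): a direct $(z-1)$-adic iteration to lift the Ore property from $A_1(k)$ does not obviously terminate, since $U(H)$ is not complete in that topology. The cleanest resolution is to cite the general localizability theorem for prime ideals generated by central regular elements in Noetherian rings, or to exploit the explicit skew-polynomial presentation $U(H) = k[z,y][x;\delta]$ with $\delta = -z\,\partial/\partial y$ to verify the Ore condition by an explicit computation. Once (1) is in place, (2) follows formally from the classical embedding of the Weyl algebra into $K(p;\sigma)$.
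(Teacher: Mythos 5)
Your proof is correct and follows essentially the same route as the paper: identify $U(H)/I$ with the first Weyl algebra, use its simplicity, embed it (and its Ore ring of fractions) into $K(p;\sigma)$ via $y\mapsto p$, $x\mapsto p^{-1}t$, and get surjectivity because the image is a division subring containing $t$ and $p$. The only difference is bookkeeping: the paper defines $\Phi$ on $U(H)$ first and cites Lichtman's Lemma~13 for the Ore property of $\mathfrak{S}$, whereas you work on the quotient first and invoke the standard polycentral/AR localization theorem for the same fact, which is an equally valid reference.
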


\begin{proof}
First note that $$p(p^{-1}t)-(p^{-1}t)p=t-p^{-1}p(t-1)=1.$$
Hence there exists a $k$-algebra homomorphism $\Phi\colon U(H)\rightarrow K(p;\sigma)$
such that $\Phi(y)=p$, $\Phi(x)=p^{-1}t$ and $\Phi(z)=1$. The ideal $I$ is clearly contained in the kernel of $\Phi$.
Now note that $U(H)/I$ is the first Weyl algebra, which is a simple $k$-algebra. Thus $I$ is the kernel of $\Phi$. 
The subset $\mathfrak{S}$ is an Ore subset of $U(H)$ by \cite[Lemma~13]{Lichtmanfreeuniversalenveloping}.
By the universal property of the Ore localization $\Phi$ can be extended to a $k$-algebra
homomorphism $\Phi\colon \mathfrak{S}^{-1}U(H)\rightarrow K(p;\sigma)$. Note that $\mathfrak{S}^{-1}U(H)$
is a local ring with maximal ideal $\mathfrak{S}^{-1}I$. It induces an embedding of
division rings $\mathfrak{S}^{-1}U(H)/\mathfrak{S}^{-1}I\rightarrow K(p;\sigma)$. Now
$\Phi$ is surjective because $\Phi(yx)=t$ and $\Phi(y)=p$.
\end{proof}

The next result is \cite[Corollary~3.2]{FerreiraGoncalvesSanchez2}. It will
allow us to obtain  free algebras generated by symmetric elements 
from  free group algebras.  
\begin{lem}\label{coro:freeinsidegroupring}
Let $G$ be the free group on the set of two elements $\{x,y\}$. Let
$k$ be a field and consider the group algebra $k[G]$. Then the
$k$-algebra generated by $x+x^{-1}$ and $y+y^{-1}$ inside $k[G]$ is
free on $\{x+x^{-1}, y+y^{-1}\}$. \qed
\end{lem}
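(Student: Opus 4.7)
My plan is to prove the stronger statement that the canonical $k$-algebra homomorphism
\[
\phi \colon k\langle U, V\rangle \longrightarrow k[G], \qquad U \mapsto x + x^{-1}, \quad V \mapsto y + y^{-1},
\]
from the free associative algebra on two letters is \emph{injective}. The main tool is the word-length filtration on $k[G]$. For each integer $n \geq 0$, let $F_n \subseteq k[G]$ denote the $k$-span of those $g \in G$ whose reduced form in the letters $x^{\pm 1}, y^{\pm 1}$ has length at most $n$. Because concatenation of two reduced words only shortens under cancellation, one has $F_n F_m \subseteq F_{n+m}$, so $\{F_n\}$ is an ascending ring filtration.

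The next step is to compute the top component of $\phi(M) \bmod F_{n-1}$ for a standard monomial $M$ alternating $U$- and $V$-blocks with exponent sequence $(a_1, b_1, a_2, b_2, \ldots)$ of positive integers (allowing the first or last entry to vanish) and total degree $n := \sum a_i + \sum b_j$. Expanding via the distributive law, $\phi(M)$ is a sum of $2^n$ signed words of the shape $x^{\epsilon_{1,1}} \cdots x^{\epsilon_{1,a_1}} y^{\delta_{1,1}} \cdots$. Within a single block of $a_i$ consecutive $x^{\pm 1}$'s, any sign change produces at least one cancellation and strictly decreases the length; hence the terms contributing to $F_n \setminus F_{n-1}$ are precisely those in which each block is monochromatic, giving
\[
\overline{\phi(M)} = \sum_{\epsilon,\delta \in \{\pm 1\}^{*}} \overline{\,x^{\epsilon_1 a_1} y^{\delta_1 b_1} x^{\epsilon_2 a_2} y^{\delta_2 b_2} \cdots\,} \quad \text{in } F_n/F_{n-1},
\]
with each summand a distinct reduced word of length exactly $n$, appearing with coefficient $1$.

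The crucial combinatorial observation is that a reduced word in the free group $G = F(x,y)$ admits a unique factorization into maximal homogeneous $x$- and $y$-blocks, so from any one length-$n$ word occurring above one can recover the block-length sequence $(a_1, b_1, a_2, b_2, \ldots)$. Therefore, for distinct standard monomials $M \neq M'$ of the same total degree, the top components $\overline{\phi(M)}$ and $\overline{\phi(M')}$ are supported on disjoint subsets of $G$.

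To conclude, suppose $\phi\!\left(\sum_I c_I M_I\right) = 0$ with not all $c_I \in k$ zero, and let $n$ be the maximum of $|M_I|$ over indices with $c_I \neq 0$. Projecting to $F_n/F_{n-1}$ annihilates all monomials of smaller degree and yields $\sum_{|M_I| = n} c_I \, \overline{\phi(M_I)} = 0$. By the disjointness of supports established above, every remaining $c_I$ must vanish, contradicting the choice of $n$. Hence $\phi$ is injective, which is exactly the claim. The only real obstacle in the argument is the combinatorial disjointness step, which follows directly from unique reduction in $F(x,y)$.
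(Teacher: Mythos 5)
Your argument is correct, and it is genuinely different from what the paper does: the paper offers no internal proof at all, simply quoting the statement as Corollary~3.2 of the cited work of Ferreira, Gon\c{c}alves and S\'anchez, where it is extracted from their analysis of free group algebras inside group rings (passing from the free group algebra on $\{x,y\}$ to the free algebra on $\{x+x^{-1},y+y^{-1}\}$). You instead give a direct, self-contained combinatorial proof: you filter $k[G]$ by reduced word length, observe that within a block all factors are powers of a single letter so that a block $(x+x^{-1})^{a_i}$ contributes $x^{s}$ with $|s|\leq a_i$ and equality exactly in the monochromatic case, conclude that the image of a degree-$n$ monomial in $F_n/F_{n-1}$ is the sum, with coefficient $1$, of the $2^{(\text{number of blocks})}$ reduced words $x^{\varepsilon_1 a_1}y^{\delta_1 b_1}\cdots$, and then use the unique maximal-block factorization of reduced words in $F(x,y)$ to see that distinct monomials of the same degree have disjoint supports in the associated graded piece, so no nontrivial relation can survive the leading-term projection. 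All the steps check out (the only implicit points are that $F_nF_m\subseteq F_{n+m}$ and that the length-$n$ words form a $k$-basis of $F_n/F_{n-1}$, both immediate), and your argument has the added merit of being elementary and of working verbatim over any field (indeed over any nonzero commutative coefficient ring), since all coefficients appearing in the leading terms are equal to $1$; the trade-off is that it is special to this particular pair of elements, whereas the cited source obtains the statement as a byproduct of a more structural result about free group algebras.
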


Now we are ready to present the main result of this section. It will be used throughout the paper. 
Parts (1),(2) and (3) in Sections~\ref{sec:residuallynilpotent} and~\ref{sec:Ore} while
parts (4),(5) in Section~\ref{sec:Heisenberggroup}.

\begin{theo}\label{theo:freegroupHeisenberg}
Let $k$ be a field of  characteristic zero. Let $H$ be the
Heisenberg Lie $k$-algebra.
Let $U(H)$ be the universal enveloping algebra of $H$ and
$\mathfrak{D}(H)$ be the Ore division ring of fractions of $U(H)$.
Set $z=[y,x]$, $V=\frac{1}{2}(xy+yx)$, and consider the following
elments of $\mathfrak{D}(H)$:
$$S=(V-\frac{1}{3}z)(V+\frac{1}{3}z)^{-1},$$
$$T=(z+y^2)^{-1}(z-y^2)S(z+y^2)(z-y^2)^{-1},$$
$$S_1=z^{-1}\Big((V-\frac{1}{3}z)(V+\frac{1}{3}z)^{-1}+ (V-\frac{1}{3}z)^{-1}(V+\frac{1}{3}z)\Big) z^{-1},$$
$$S_2=z\Big((V-\frac{1}{3}z)(V+\frac{1}{3}z)^{-1}+ (V-\frac{1}{3}z)^{-1}(V+\frac{1}{3}z)\Big) z,$$
$$T_1=(z+y^2)^{-1}(z-y^2)S_1(z+y^2)(z-y^2)^{-1},$$
$$T_2=(z^2+y^3)^{-1}(z^2-y^3)S_1(z^2+y^3)(z^2-y^3)^{-1},$$
$$T_3=(z+y^2)^{-1}(z-y^2)S_2(z+y^2)(z-y^2)^{-1},$$
$$T_4=(z^2+y^3)^{-1}(z^2-y^3)S_2(z^2+y^3)(z^2-y^3)^{-1}.$$

The following hold true.
\begin{enumerate}[\rm (1)]
\item The $k$-subalgebra of $\mathfrak{D}(H)$ generated by $\{S,\, S^{-1},\, T,\, T^{-1}\}$
 is the free group $k$-algebra on the set $\{S,T\}$.
\item
\begin{enumerate}[\rm (a)]
    \item The elements $S_1$, $S_1^2$, $T_1$ and $T_1^2$ are symmetric with respect to the involutions in 
 Lemma~\ref{lem:equivalentinvolutionHeisenbergalgebra}~(2) and (3).
    \item The $k$-subalgebra of $\mathfrak{D}(H)$ generated by $\{S_1,T_1\}$ is the free $k$-algebra on the 
		set $\{S_1,T_1\}$.
    \item The $k$-subalgebra of $\mathfrak{D}(H)$ generated by $\{1+S_1,(1+S_1)^{-1}, 1+T_1,(1+T_1)^{-1}\}$
    is the free group $k$-algebra on the set $\{1+S_1,1+T_1\}$.
		\item The $k$ subalgebra of $\mathfrak{D}(H)$ generated by $\{S_1^2,T_1^2\}$ is the free $k$-algebra on the 
		set $\{S_1^2,T_1^2\}$.
    \item The $k$-subalgebra of $\mathfrak{D}(H)$ generated by $\{1+S_1^2,(1+S_1^2)^{-1}, 1+T_1^2,(1+T_1^2)^{-1}\}$
    is the free group $k$-algebra on the set $\{1+S_1,1+T_1\}$.
\end{enumerate}
\item \begin{enumerate}[\rm (a)]
    \item The elements $S_1$, $S_1^2$, $T_2$ and $T_2^2$ are symmetric with respect to the involution in Lemma~\ref{lem:equivalentinvolutionHeisenbergalgebra}~(1).
\item The $k$-subalgebra of $\mathfrak{D}(H)$ generated by $\{S_1,T_2\}$ is the free $k$-algebra 
on the set $\{S_1,T_2\}$.
\item The $k$-subalgebra of $\mathfrak{D}(H)$ generated by $\{1+S_1,(1+S_1)^{-1}, 1+T_2,(1+T_2)^{-1}\}$
    is the free group $k$-algebra on the set $\{1+S_1,1+T_2\}$.
		\item The $k$-subalgebra of $\mathfrak{D}(H)$ generated by $\{S_1^2,T_2^2\}$ is the free $k$-algebra on the 
		set $\{S_1^2,T_2^2\}$.
    \item The $k$-subalgebra of $\mathfrak{D}(H)$ generated by $\{1+S_1^2,(1+S_1^2)^{-1}, 1+T_2^2,(1+T_2^2)^{-1}\}$
    is the free group $k$-algebra on the set $\{1+S_1^2,1+T_2^2\}$.
\end{enumerate}
\item
\begin{enumerate}[\rm (a)]
\item The elements $S_2$, $S_2^2$, $T_3$ and $T_3^2$ are symmetric with respect to the involutions in 
 Lemma~\ref{lem:equivalentinvolutionHeisenbergalgebra}~(2) and (3).
\item The $k$-subalgebra of $\mathfrak{D}(H)$ generated by $\{S_2^2,T_3^2\}$ is the free $k$-algebra on the 
		set $\{S_2^2,T_3^2\}$.
\end{enumerate}
\item \begin{enumerate}[\rm (a)]
 \item The elements $S_2$, $S_2^2$, $T_4$ and $T_4^2$ are symmetric with respect to the involution in 
 Lemma~\ref{lem:equivalentinvolutionHeisenbergalgebra}~(1).
\item The $k$-subalgebra of $\mathfrak{D}(H)$ generated by $\{S_2^2,T_4^2\}$ is the free $k$-algebra on the 
		set $\{S_2^2,T_4^2\}$.
\end{enumerate}

\end{enumerate}
\end{theo}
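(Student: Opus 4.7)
The proof has two main ingredients. First, via the specialization $\Phi\colon\mathfrak{S}^{-1}U(H)\to K(p;\sigma)$ of Lemma~\ref{lem:specializationtoWeyl}, direct computation gives $\Phi(z)=1$, $\Phi(V)=t-\tfrac12$, and thence $\Phi(V-\tfrac13 z)=t-\tfrac56$, $\Phi(V+\tfrac13 z)=t-\tfrac16$, so $\Phi(S)=s$ and $\Phi(T)=usu^{-1}$. Since $\Phi(z)=1$, one finds $\Phi(S_1)=\Phi(S_2)=s+s^{-1}$; using additionally $\Phi(z\pm y^2)=1\pm p^2$ and $\Phi(z^2\pm y^3)=1\pm p^3$ one gets $\Phi(T_1)=\Phi(T_3)=u(s+s^{-1})u^{-1}$ and $\Phi(T_2)=\Phi(T_4)=u_1(s+s^{-1})u_1^{-1}$. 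Pulling back through the ring homomorphism $\Phi$, Proposition~\ref{prop:freealgebrainWeyl}(1) yields part~(1), since any nontrivial group-algebra identity upstairs would push down to one on $\{s, usu^{-1}\}$. Proposition~\ref{prop:freealgebrainWeyl}(2),(3) yield the free $k$-algebras in (2)(b) and (3)(b) and, via the identities above, also (4)(b), (5)(b) (noting that squares of free generators of a free algebra are themselves free). Parts (2)(d) and (3)(d) follow similarly.

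The symmetry assertions in parts (a) are verified directly from the three involutions of Lemma~\ref{lem:equivalentinvolutionHeisenbergalgebra}. In each case one has $V^*=\pm V$, and in all three cases the mutual exchange of $V\pm\tfrac13z$ under $*$ yields $S^*=S^{-1}$, so that $S+S^{-1}$ is symmetric. For involutions~(2),(3), $z^*=-z$ and $(y^2)^*=y^2$, giving $((z+y^2)^{-1}(z-y^2))^*=((z+y^2)^{-1}(z-y^2))^{-1}$ (using that $z\pm y^2$ commute), whence $T_1^*=T_1$ and $T_3^*=T_3$; for involution~(1), $z^*=z$ and $(y^3)^*=-y^3$ give the analogous conclusion for $T_2, T_4$. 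The $S_i$ themselves are symmetric because the two factors of $z^{\pm 1}$ in the sandwich cancel the sign of $z^*$, and the squares inherit symmetry automatically.

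The free group algebra assertions (2)(c), (2)(e), (3)(c), (3)(e) are obtained by applying Theorem~\ref{coro:divisionrings} with the valuation $\upsilon$ on $\mathfrak{D}(H)$ induced (via Lemma~\ref{lem:filtrationuniversalenveloping} and extended by Lemma~\ref{lem:gradedOre}(1)) from the $\integers$-grading of Example~\ref{ex:gradedLie}(a) for part~(2) and of Example~\ref{ex:gradedLie}(b) for part~(3). Leading-term analysis in $\gr_\upsilon(U(H))\cong U(\gr H)\cong U(H)$ shows $\upsilon(V\pm\tfrac13z)=\upsilon(z)$ and $\upsilon(S)=0$; the leading term of $S+S^{-1}$ equals $(\bar A^2+\bar B^2)(\bar A\bar B)^{-1}$ where $\bar A,\bar B$ are the (commuting) leading terms of $V\mp\tfrac13z$, and $\bar A^2+\bar B^2=2(\bar x\bar y)^2+2\bar x\bar y\bar z+\tfrac{13}{18}\bar z^2\neq 0$, so $\upsilon(S+S^{-1})=0$ and hence $\upsilon(S_1)=-2\upsilon(z)>0$; since the conjugating factor $(z^m\pm y^n)^{-1}(z^m\mp y^n)$ is a unit of valuation $0$, one also has $\upsilon(T_i)=\upsilon(S_1)>0$. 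Via the isomorphism $\gr_\upsilon(\mathfrak{D}(H))\cong \mathcal{H}^{-1}U(\gr H)\cong \mathcal{H}^{-1}U(H)\subseteq\mathfrak{D}(H)$ of Lemma~\ref{lem:gradedOre}(3), the leading terms of $S_1$ and $T_i$ correspond to $S_1, T_i$ themselves viewed inside the graded Ore localization, so the freeness proved in (b), (d) transfers to freeness of the images in $\gr_\upsilon(\mathfrak{D}(H))$. Theorem~\ref{coro:divisionrings} then delivers the claimed free group $k$-algebras, and the squared versions (e) follow by running the same argument with $X=\{S_1^2, T_i^2\}$.

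The main delicate point is the choice of valuation for each $T_i$: the \emph{symmetric} grading of Example~\ref{ex:gradedLie}(a), giving $\upsilon(z)=\upsilon(y^2)$, is essential for $T_1$ so that the leading term of $(z+y^2)^{-1}(z-y^2)$ is a nontrivial graded element and $\overline{T_1}\neq \overline{S_1}$ in $\gr_\upsilon$; the \emph{asymmetric} grading of Example~\ref{ex:gradedLie}(b), giving $\upsilon(z^2)=\upsilon(y^3)$, plays the analogous role for $T_2$. With the opposite choice of valuation the leading term of the conjugating factor collapses to $\pm 1$, forcing $\overline{T_i}=\overline{S_1}$ and violating the injectivity hypothesis of Theorem~\ref{coro:divisionrings}. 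Once the correct valuation is chosen and the identification $\gr_\upsilon(\mathfrak{D}(H))\cong\mathcal{H}^{-1}U(H)$ is made, the remaining verifications are mechanical.
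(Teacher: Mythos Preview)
Your proposal is correct and follows the same overall strategy as the paper: specialize via $\Phi$ to $K(p;\sigma)$, invoke Proposition~\ref{prop:freealgebrainWeyl} for freeness downstairs, pull back, verify the symmetry assertions by direct computation, and then use the $\mathbb{Z}$-gradings of Example~\ref{ex:gradedLie}(a),(b) to promote the free algebras to free group algebras.

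The one genuine difference is in that last step. The paper observes that $V\pm\tfrac13 z$, $z$, $z\pm y^2$ (resp.\ $z^2\pm y^3$) are \emph{homogeneous} in $U(H)$ for grading~(a) (resp.~(b)), so $S_1,T_1$ (resp.\ $S_1,T_2$) are homogeneous of positive degree in the graded Ore domain $\mathcal{H}^{-1}U(H)$, and then applies Theorem~\ref{theo:freegroupgradedOre} directly. You instead pass to the valuation $\upsilon$ induced by the grading and apply Theorem~\ref{coro:divisionrings}, identifying $\gr_\upsilon(\mathfrak{D}(H))\cong\mathcal{H}^{-1}U(H)$ via Lemma~\ref{lem:gradedOre}(3). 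This works for exactly the same reason---because the elements are already homogeneous, their leading terms in $\gr_\upsilon$ are themselves---but it is a detour: your ``leading-term analysis'' of $S+S^{-1}$ and the explicit computation of $\bar A^2+\bar B^2$ are unnecessary, since homogeneity gives $\upsilon(S_1)=4$ (resp.\ $6$) immediately, and $S_1\neq 0$ already follows from~(b). The paper's route via Theorem~\ref{theo:freegroupgradedOre} is shorter; your route via Theorem~\ref{coro:divisionrings} is the one actually used later in Sections~\ref{sec:Ore} and~\ref{sec:Heisenberggroup}, so it has the virtue of rehearsing the argument that matters downstream. Your closing remark on why grading~(a) is needed for $T_1$ and grading~(b) for $T_2$ is a point the paper leaves implicit.
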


\begin{proof}
Consider the surjective $k$-algebra homomorphism $\Phi\colon
\mathfrak{S}^{-1}U(H)\rightarrow K(p;\sigma)$ given in Lemma~\ref{lem:specializationtoWeyl}.
Then $\Phi(y)=p$, $\Phi(x)=p^{-1}t$ and $\Phi(z)=1$. 

Recall that in
$K(p;\sigma)$, we have and $tp=p(t-1)$. 
Thus $\Phi(V)=\Phi(\frac{1}{2}(xy+yx))=\frac{1}{2}(p^{-1}tp+pp^{-1}t)=\frac{1}{2}(t-1+t)=t-\frac{1}{2}$.
\begin{eqnarray*}
\Phi(V-\frac{1}{3}z)=t-\frac{1}{2}-\frac{1}{3}=t-\frac{5}{6}, & & \Phi(V+\frac{1}{3}z)=t-\frac{1}{2}+\frac{1}{3}=t-\frac{1}{6}, \\
\Phi(z+y^2)=1+p^2, & &  \Phi(z-y^2)=1-p^2,\\
\Phi(z^2+y^3)=1+p^3, & & \Phi(z^2-y^3)=1-p^3.
\end{eqnarray*}
Hence the elements $V-\frac{1}{3}z,\, V+\frac{1}{3}z,\, z+y^2,\, z-y^2,\,  z^2+y^3,\, z^2-y^3$ are invertible
in $\mathfrak{S}^{-1}U(H)$.

 Thus
$S,S^{-1},T,T^{-1}, S_1,S_1^{-1}, T_1,T_1^{-1}, T_2,T_2^{-1}\in \mathfrak{S}^{-1}U(H)$. 
Moreover, following the notation of
Proposition~\ref{prop:freealgebrainWeyl},
$$\Phi(S)=(t-\frac{5}{6})(t-\frac{1}{6})^{-1}=s,$$
$$\Phi(S_1)=(t-\frac{5}{6})(t-\frac{1}{6})^{-1}+(t-\frac{1}{6})(t-\frac{5}{6})^{-1}=s+s^{-1},$$
$$\Phi(S_2)=(t-\frac{5}{6})(t-\frac{1}{6})^{-1}+(t-\frac{1}{6})(t-\frac{5}{6})^{-1}=s+s^{-1},$$
$$\Phi((z+y^2)^{-1}(z-y^2))=(1+p^2)^{-1}(1-p^2)=(1-p^2)(1+p^2)^{-1}=u,$$
$$\Phi((z+y^2)(z-y^2)^{-1})=(1+p^2)(1-p^2)^{-1}=u^{-1},$$ 
$$\Phi((z^2+y^3)(z^2-y^3)^{-1})=(1+p^3)(1-p^3)^{-1}={u}_1^{-1}.$$
 
Hence
$\Phi(T)=usu^{-1}$,
 $\Phi(T_1)=u(s+s^{-1})u^{-1}$, $\Phi(T_2)=u_1(s+s^{-1})u_1^{-1}$, 
$\Phi(T_3)=u(s+s^{-1})u^{-1}$ and $\Phi(T_4)=u_1(s+s^{-1})u_1^{-1}$.

\medskip

We proceed to show that the elements in statements (1),(2),(3),(4),(5) generate free (group) algebras.
That they are symmetric will be proved below.

\medskip

(1)  By Proposition~\ref{prop:freealgebrainWeyl}(1), the set 
 $\{s,\,s^{-1},\, usu^{-1},\,
us^{-1}u^{-1}\}$ generates a free group $k$-algebra. Therefore, the
$k$-subalgebra of $\mathfrak{S}^{-1}U(H)$  generated by  $\{S,\, S^{-1},\, T,\, T^{-1}\}$
is the free group $k$-algebra on the
set $\{S,\ T\}$.

(2) By Proposition~\ref{prop:freealgebrainWeyl}(2), the set
$\{s+s^{-1},u(s+s^{-1})u^{-1}\}$ are the free generators of a free $k$-algebra. Therefore the $k$-subagebra generated by $\{S_1,T_1\}$
is the free $k$-algebra on $\{S_1,T_1\}$. This implies that
the $k$-subalgebra generated by $\{S_1^2,T_1^2\}$ is the free $k$-algebra on $\{S_1^2,T_1^2\}$.

Consider $H$ as a $\mathbb{Z}$-graded Lie $k$-algebra as in Example~\ref{ex:gradedLie}(a). Then $U(H)$ is graded according to
Lemma~\ref{lem:gradeduniversalenveloping}. The $k$-algebra $U(H)$ is an Ore domain. 
Recall that given a
$\mathbb{Z}$-graded $k$-algebra 
which is an Ore domain, localizing at the set of nonzero homogeneous elements yields  a graded division ring.
Thus if we localize at the set $\mathcal{H}$ of homogeneous elements of $U(H)$, we get that
$\mathcal{H}^{-1}U(H)$ is a graded division ring. 
Notice that $z,\, V-\frac{1}{3}z,\, V+\frac{1}{3}z$ are homogeneous of degree $-2$. Therefore
$S_1$ is homogeneous of degree $4$ and $S_1^2$
is homogeneous of degreee $8$. Notice that $z+y^2,\, z-y^2$ are homogeneous of degree $-2$. Therefore $T_1$ is homogeneous of degree $4$
and $T_1^2$ is homogeneous of degree $8$.
By Theorem~\ref{theo:freegroupgradedOre}, the $k$-subalgebra generated by the set $\{1+S_1,\, (1+S_1)^{-1},\, 1+T_1,\, (1+T_1)^{-1}\}$
is the free group $k$-algebra on the set $\{1+S_1,1+T_1\}$. Also, by  Theorem~\ref{theo:freegroupgradedOre}, 
the $k$-subalgebra generated by the set $\{1+S_1^2,\, (1+S_1^2)^{-1},\, 1+T_1^2,\, (1+T_1^2)^{-1}\}$
is the free group $k$-algebra on the set $\{1+S_1^2,1+T_1^2\}$, as desired.

(3) By Proposition~\ref{prop:freealgebrainWeyl}(3), the set
$\{s+s^{-1},u_1(s+s^{-1})u_1^{-1}\}$ are the free generators of a free $k$-algebra. Therefore the $k$-subagebra generated by $\{S_1,T_2\}$
is the free $k$-algebra on $\{S_1,T_2\}$.  This implies that
the $k$-subalgebra generated by $\{S_1^2,T_2^2\}$ is the free $k$-algebra on $\{S_1^2,T_2^2\}$.

Consider $H$ as a $\mathbb{Z}$-graded Lie $k$-algebra as in Example~\ref{ex:gradedLie}(b). Then $U(H)$ is graded according to
Lemma~\ref{lem:gradeduniversalenveloping}. The $k$-algebra $U(H)$ is an Ore domain. 
Recall that given a
$\mathbb{Z}$-graded $k$-algebra 
which is an Ore domain, localizing at the set of nonzero homogeneous elements yields  a graded division ring.
Thus if we localize at the set $\mathcal{H}$ of homogeneous elements of $U(H)$, we get that
$\mathcal{H}^{-1}U(H)$ is a graded division ring. 
Notice that $z,\, V-\frac{1}{3}z,\, V+\frac{1}{3}z$ are homogeneous of degree $-3$. Therefore
$S_1$ is homogeneous of degree $6$ and $S_1^2$ is homogeneous
of degree $12$. Notice that $z^2+y^3,\, z^2-y^3$ are homogeneous of degree $-6$. 
Therefore $T_2$ is homogeneous of degree $6$ and $T_2^2$ is homogeneous of degree $12$.
By Theorem~\ref{theo:freegroupgradedOre}, the $k$-subalgebra generated by the set $\{1+S_1,\, (1+S_1)^{-1},\, 1+T_2,\, (1+T_2)^{-1}\}$
is the free group $k$-algebra on the set $\{1+S_1,1+T_2\}$. Also, by 
Theorem~\ref{theo:freegroupgradedOre}, the $k$-subalgebra generated by the set $\{1+S_1^2,\, (1+S_1^2)^{-1},\, 1+T_2^2,\, (1+T_2^2)^{-1}\}$
is the free group $k$-algebra on the set $\{1+S_1^2,1+T_2^2\}$, as desired.

(4) By Proposition~\ref{prop:freealgebrainWeyl}(2), the set
$\{s+s^{-1},u(s+s^{-1})u^{-1}\}$ are the free generators of a free $k$-algebra. Therefore the $k$-subagebra generated 
by $\{S_2,T_3\}$
is the free $k$-algebra on $\{S_2,T_3\}$. This implies that
the $k$-subalgebra generated by $\{S_2^2,T_3^2\}$ is the free $k$-algebra on $\{S_2^2,T_3^2\}$.

(5) By Proposition~\ref{prop:freealgebrainWeyl}(3), the set
$\{s+s^{-1},u_1(s+s^{-1})u_1^{-1}\}$ are the free generators of a free $k$-algebra. 
Therefore the $k$-subagebra generated by $\{S_2,T_4\}$
is the free $k$-algebra on $\{S_2,T_4\}$.  This implies that
the $k$-subalgebra generated by $\{S_2^2,T_4^2\}$ is the free $k$-algebra on $\{S_2^2,T_4^2\}$.

\medskip

Now we prove that the elements considered in the statements of (2), (3), (4) and (5) are symmetric.
Consider first the principal involution, that is, the one in 
Lemma~\ref{lem:equivalentinvolutionHeisenbergalgebra}(3).

$$V^* =  \frac{1}{2}(xy+yx)^* = \frac{1}{2}(xy+yx) = V$$
$$(V-\frac{1}{3}z)^* =  V+\frac{1}{3}z,\qquad (V+\frac{1}{3}z)^*  =  V-\frac{1}{3}z$$
\begin{eqnarray*}
S_1^*&=& \Big(z^{-1}\Big((V-\frac{1}{3}z)(V+\frac{1}{3}z)^{-1}+ (V-\frac{1}{3}z)^{-1}(V+\frac{1}{3}z)\Big) z^{-1}\Big)^*\\
&=& z^{-1}\Big((V-\frac{1}{3}z)^{-1}(V+\frac{1}{3}z)+(V-\frac{1}{3}z)(V+\frac{1}{3}z)^{-1} \Big) z^{-1} \\
&=& S_1 \end{eqnarray*}
\begin{eqnarray*}
((z+y^2)^{-1}(z-y^2))^* & = &  (-z-y^2)(-z+y^2)^{-1} \\
&=& (z+y^2)(z-y^2)^{-1}
\end{eqnarray*}

\begin{eqnarray*}
T_1^*&=& ((z+y^2)^{-1}(z-y^2)S(z+y^2)(z-y^2)^{-1})^* \\
&=&  (z+y^2)^{-1}(z-y^2)S(z+y^2)(z-y^2)^{-1} \\
&=& T_1.
\end{eqnarray*}
Similarly, $S_2^*=S_2$ and $T_3^*=T_3$.

Consider now the involution in Lemma~\ref{lem:equivalentinvolutionHeisenbergalgebra}(2).

$$V^* =  \frac{1}{2}(xy+yx)^* =\frac{1}{2}(xy+yx) =V$$
$$(V-\frac{1}{3}z)^*=  V+\frac{1}{3}z,\qquad (V+\frac{1}{3}z)^*  =  V-\frac{1}{3}z$$
\begin{eqnarray*}
S_1^*&=& \Big(z^{-1}\Big((V-\frac{1}{3}z)(V+\frac{1}{3}z)^{-1}+ (V-\frac{1}{3}z)^{-1}(V+\frac{1}{3}z)\Big) z^{-1}\Big)^*\\
&=& z^{-1}\Big((V-\frac{1}{3}z)^{-1}(V+\frac{1}{3}z)+(V-\frac{1}{3}z)(V+\frac{1}{3}z)^{-1} \Big) z^{-1} \\
&=& S_1 
\end{eqnarray*}
\begin{eqnarray*}((z+y^2)^{-1}(z-y^2))^* & = &  (-z-y^2)(-z+y^2)^{-1} \\
&=& (z+y^2)(z-y^2)^{-1}
\end{eqnarray*}
\begin{eqnarray*}T_1^*&=& ((z+y^2)^{-1}(z-y^2)S(z+y^2)(z-y^2)^{-1})^* \\
&=&  (z+y^2)^{-1}(z-y^2)S(z+y^2)(z-y^2)^{-1} \\
&=& T_1.\end{eqnarray*}
Similarly $S_2^*=S_2$ and $T_3^*=T_3$.

Finally consider the involution in Lemma~\ref{lem:equivalentinvolutionHeisenbergalgebra}(1).
$$V^* =  \frac{1}{2}(xy+yx)^* = -\frac{1}{2}(xy+yx) = -V$$
$$(V-\frac{1}{3}z)^* =  -V-\frac{1}{3}z,\qquad (V+\frac{1}{3}z)^*  = -V+\frac{1}{3}z$$
\begin{eqnarray*}
S_1^*&=& \Big(z^{-1}\Big((V-\frac{1}{3}z)(V+\frac{1}{3}z)^{-1}+ (V-\frac{1}{3}z)^{-1}(V+\frac{1}{3}z)\Big) z^{-1}\Big)^*\\
&=& z^{-1}\Big((-V+\frac{1}{3}z)^{-1}(-V-\frac{1}{3}z)+(-V+\frac{1}{3}z)(-V-\frac{1}{3}z)^{-1} \Big) z^{-1} \\
&=& z^{-1}\Big((V-\frac{1}{3}z)^{-1}(V+\frac{1}{3}z)+ (V-\frac{1}{3}z)(V+\frac{1}{3}z)^{-1}\Big) z^{-1}\\
&=& S_1 
\end{eqnarray*}
$$((z^2+y^3)^{-1}(z^2-y^3))^*  =   (z^2+y^3)(z^2-y^3)^{-1}$$
\begin{eqnarray*}
{T_2}^*&=& ((z^2+y^3)^{-1}(z^2-y^3)S_1(z^2+y^3)(z^2-y^3)^{-1})^* \\
&=&  (z^2+y^3)^{-1}(z^2-y^3)S_1(z^2+y^3)(z^2-y^3)^{-1} \\
&=& T_2.
\end{eqnarray*}
Similarly $S_2^*=S_2$ and $T_4^*=T_4$.
\end{proof}

\begin{theo}\label{theo:symmetricnilpotentLie}
Let $k$ be a field of  characteristic zero and  $L$ be a nonabelian 
nilpotent Lie $k$-algebra.
Let $U(L)$ be the universal enveloping algebra of $L$ and
$\mathfrak{D}(L)$ be the Ore  ring of fractions of $U(H)$.
Then, for any involution $*\colon L\rightarrow L,$ $f\mapsto f^*$,
there exist nonzero symmetric elements $U,V\in L$ such that the
$k$-subalgebra of $\mathfrak{D}(L)$ generated by $\{U,U^{-1},V,V^{-1}\}$
is the free group $k$-algebra on $\{U,V\}$. 
\end{theo}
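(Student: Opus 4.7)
The plan is to reduce the problem to the Heisenberg case and then invoke Theorem~\ref{theo:freegroupHeisenberg}. By Proposition~\ref{prop:involutionnilpotent} applied to $(L,*)$, one can find $x,y\in L$ with $z=[y,x]\neq 0$ and $[y,z]=[x,z]=0$ such that the Lie $k$-subalgebra $H$ generated by $\{x,y\}$ is Heisenberg, $*$-invariant, and the restriction $*|_H$ takes one of three normal forms: (a) $x^*=x,\, y^*=-y$ (whence $z^*=z$); (b) $x^*=x,\, y^*=y$ (whence $z^*=-z$); or (c) $x^*=-x,\, y^*=-y$ (whence $z^*=-z$). These coincide, respectively, with the involutions (1), (2) and (3) of Lemma~\ref{lem:equivalentinvolutionHeisenbergalgebra}.

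Next, since $H$ is a finite-dimensional nilpotent Lie $k$-algebra, its universal enveloping algebra $U(H)$ is Noetherian and hence an Ore domain. The inclusion $U(H)\hookrightarrow U(L)\subseteq \mathfrak{D}(L)$ therefore extends, by the universal property of the Ore localization, to an embedding $\mathfrak{D}(H)\hookrightarrow \mathfrak{D}(L)$. Under this embedding the extension of $*$ to $\mathfrak{D}(L)$ restricts to the extension of $*|_H$ to $\mathfrak{D}(H)$, so it suffices to produce symmetric free-group-algebra generators already inside $\mathfrak{D}(H)$.

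Now apply Theorem~\ref{theo:freegroupHeisenberg}. In cases (b) and (c), set $U=1+S_1^2$ and $V=1+T_1^2$: by Theorem~\ref{theo:freegroupHeisenberg}(2)(a) the elements $S_1^2$ and $T_1^2$ are symmetric under involutions (2) and (3), so both $U$ and $V$ are symmetric; and by Theorem~\ref{theo:freegroupHeisenberg}(2)(e) the $k$-subalgebra of $\mathfrak{D}(H)$ generated by $\{U,U^{-1},V,V^{-1}\}$ is the free group $k$-algebra on $\{U,V\}$. In case (a), set $U=1+S_1^2$ and $V=1+T_2^2$: symmetry follows from Theorem~\ref{theo:freegroupHeisenberg}(3)(a) and the free group algebra conclusion from Theorem~\ref{theo:freegroupHeisenberg}(3)(e).

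The substantive content has already been absorbed into Proposition~\ref{prop:involutionnilpotent} (which delivers a $*$-invariant Heisenberg subalgebra on which $*$ is in a known normal form) and Theorem~\ref{theo:freegroupHeisenberg} (which supplies the concrete symmetric free-group-algebra generators in $\mathfrak{D}(H)$), so there is essentially no obstacle left beyond matching the three normal forms of $*|_H$ with the appropriate branches of Theorem~\ref{theo:freegroupHeisenberg}. The only minor care required is checking that the embedding $\mathfrak{D}(H)\hookrightarrow \mathfrak{D}(L)$ is compatible with the involutions, which is automatic from the construction of $\mathfrak{D}(H)$ as the Ore ring of fractions of $U(H)$ and from the fact that $*|_H$ is the restriction of the given involution on $L$.
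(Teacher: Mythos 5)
Your proof is correct and follows essentially the same route as the paper: reduce via Proposition~\ref{prop:involutionnilpotent} to a $*$-invariant Heisenberg subalgebra $H$, use the Ore property to embed $\mathfrak{D}(H)$ into $\mathfrak{D}(L)$ compatibly with the involutions, and then invoke Theorem~\ref{theo:freegroupHeisenberg}. The only difference is that you spell out the explicit choices $U=1+S_1^2$ and $V=1+T_1^2$ (resp.\ $1+T_2^2$) and the matching of the three normal forms with the branches of that theorem, which the paper leaves implicit.
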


\begin{proof}
By Proposition~\ref{prop:involutionnilpotent}, there exists a $*$-invariant
Heisenberg Lie $k$-subalgebra $H$ generated by two elements $x,y$ and such that $*$,
when restricted to $H$ is one of the three involutions in 
Lemma~\ref{lem:equivalentinvolutionHeisenbergalgebra}. Since $U(L)$ is an Ore domain,
then $U(H)$ is also an Ore domain. Thus
the division ring generated by $U(H)$ inside $\mathfrak{D}(L)$ is $\mathfrak{D}(H)$.
By Theorem~\ref{theo:freegroupHeisenberg}, there exist elements $U$ and $V$ as desired.
\end{proof}




\section{Free group algebras in division rings generated by 
universal enveloping algebras of residually nilpotent Lie algebras}
\label{sec:residuallynilpotent}

Let $k$ be a field,  $L$ be a  Lie $k$-algebra and $U(L)$ its universal enveloping
algebra. 
It was proved in \cite{Cohnembeddingrings} that $U(L)$ can be embedded in a division ring.  
Two similar proofs of this fact were given in \cite{Lichtmanvaluationmethods} and 
\cite{Huishiskewfields}. Moreover, the division ring constructed in the foregoing three papers is the same
\cite[Theorem~8]{Huishiskewfields}.
We will work with the construction of the skew field $\mathfrak{D}(L)$ which contains $U(L)$
and it is generated by $U(L)$ (as a division ring) given by Lichtman in~\cite{Lichtmanvaluationmethods}. 
The interested reader can also find this construction in \cite[Section~2.6]{Cohnskew}. 
Of course, when $U(L)$ is an Ore domain, $\mathfrak{D}(L)$ is the Ore ring of fractions of $U(L)$. 

In this subsection, we want to obtain free
group algebras in $\mathfrak{D}(L)$, where $L$ is a (generalization
of) a residually nilpotent Lie algebra, from the ones obtained in
$\mathfrak{D}(H)$, where $H$ is the Heisenberg Lie algebra. The technique we will use is
from~\cite[Section~4]{FerreiraGoncalvesSanchez2}. 

For that, we will need some results on the division ring $\mathfrak{D}(L)$. 
For example, $\mathfrak{D}(L)$ is well behaved for Lie subalgebras of $L$ as shown in 
\cite[Proposition~2.5]{Lichtmanuniversalfields}. More precisely, if $N$ is a Lie subalgebra
of $L$, then the natural embedding $U(N)\hookrightarrow U(L)$ can be extended to an embedding
$\mathfrak{D}(N)\hookrightarrow \mathfrak{D}(L)$. Furthermore, 
if $\mathcal{B}_N$ is a basis of $N$ and $\mathcal{C}$
is a set of elements of $L\setminus N$ such that $\mathcal{B}_N\cup
\mathcal{C}$  is a basis of $L$, then the standard monomials in
$\mathcal{C}$
 are linearly independent over
$\mathfrak{D}(N)$. Notice that if $U(L)$ is an Ore domain, then these assertions
are easily verified.

\medskip

Let $k$ be a field and $R$ be a $k$-algebra. Suppose that $\delta\colon R\rightarrow R$
is a $k$-derivation of $R$, that is, $\delta$ is
a $k$-linear map such that $\delta(ab)=\delta(a)b+a\delta(b)$ for all $a,b\in R$. 
We will consider the skew polynomial ring
$R[x;\delta]$. The elements of $R[x;\delta]$ are ``right
polynomials'' of the form $\sum_{i=0}^n x^ia_i$, where the
coeficients $a_i$ are in $R$. The multiplication is determined by
$$ax=xa+\delta(a) \quad \textrm{for all } a\in R.$$

Given $R[x;\delta]$, one can construct the \emph{formal
pseudo-differential operator ring}, denoted $R((t_x;\delta))$,
consisting of the formal Laurent series $\sum_{i=n}^\infty
t_x^ia_i$, with $n\in\mathbb{Z}$ and coefficients $a_i\in R$,
satisfying $at_x^{-1}=t_x^{-1}a+\delta(a)$ for all $a\in R$.
Therefore
\begin{equation}
at_x=t_xa-t_x\delta(a)t_x = \sum_{i=1}^\infty
t_x^i(-1)^{i-1}\delta^{i-1}(a),
\end{equation}
for any $a\in R$.

The subset $R[[t_x;\delta]]$ of $R((t_x;\delta))$ consisting of the
Laurent series of the form $\sum_{i=0}^\infty t_x^ia_i$ is a
$k$-subalgebra of $R((t_x;\delta))$.
The set $\mathcal{S}=\{1,t_x,t_x^2,\dots\}$ is a
left denominator set of $R[[t_x;\delta]]$ such that the Ore
localization $\mathcal{S}^{-1}R[[t_x;\delta]]$ is the $k$-algebra
$R((t_x;\delta))$, see for example \cite[Theorem~2.3.1]{Cohnskew}.
If $R$ is a domain, then a series  $f\in R((t_x;\delta))$ is invertible
if, and only if, the coefficient of the least element in the support of $f$
is invertible in $R$.
Notice that there is a natural embedding $R[x;\delta]\hookrightarrow
R((t_x;\delta_x))$ sending $x$ to $t_x^{-1}$. 

In what follows, $R[y;\delta_y][x;\delta_x]$ means  polynomials of the form
$\sum_{i=0}^n x^i f_i$ where each $f_i\in R[y;\delta_y]$ and $\delta_x$ is a $k$-derivation of $R[y;\delta_y]$.
Also $R((t_y;\delta_y))((t_x;\delta_x))$ is the ring of series of the form
$\sum_{i=n}^\infty
t_x^if_i$, with $n\in\mathbb{Z}$, coefficients $f_i\in R((t_y;\delta_y))$ and 
$\delta_x$ is a $k$-derivation of $R((y;\delta_y))$.

\medskip

Let $k$ be a field.
Let $L$  a Lie $k$-algebra generated by two elements $u,v$. Let
$H=\langle x,y\mid [[y,x],x]=[[y,x],y]=0\rangle$ be the Heisenberg
Lie $k$-algebra. Suppose that there exists a Lie $k$-algebra
homomorphism \begin{equation*}  L\stackrel{\rho}{\rightarrow} H,\
u\mapsto x,\ v\mapsto y.
\end{equation*}
Define $w=[v,u]$ and $z=[y,x]$. Let $N=\ker \rho$. Thus $N$ is a
(Lie) ideal of $L$.

By the universal property of  universal enveloping algebras, $\rho$
can be uniquely extended to a  $k$-algebra homomorphism $\psi\colon
U(L)\rightarrow U(H)$ between the corresponding universal enveloping
algebras. Note that $\ker \psi$ is the ideal of $U(L)$ generated by
$N$. The restriction $\psi_{|U(N)}$ coincides with the augmentation
map $\varepsilon\colon U(N)\rightarrow k$.

By the PBW-Theorem, the elements of $U(H)$ are uniquely expressed as
finite sums  $\sum_{l,m,n\geq 0} x^ly^mz^na_{lmn}$, with
$a_{lmn}\in k$. Let $\delta_x$ be the inner $k$-derivation of $U(H)$
determined by $x$, i.e. $\delta_x(f)=[f,x]=fx-xf$ for all $f\in
U(H)$. It can be proved that
\begin{equation}\label{eq:U(L)aspolynomials}
U(H)=k[z][y][x;\delta_x].
\end{equation}
\begin{equation}\label{eq:U(H)insideseries}
U(H)\hookrightarrow k((t_z))((t_y))((t_x;\delta_x)), \ z\mapsto t_z^{-1},\ y\mapsto t_y^{-1},\ 
x\mapsto t_x^{-1}.
\end{equation}

Consider now  $U(L)$, the universal enveloping algebra of $L$. By
the PBW-Theorem, the elements of $U(L)$ can be uniquely expressed as
finite sums \linebreak $\sum\limits_{l,m,n\geq 0} u^lv^mw^nf_{lmn}$ with $f_{lmn}\in
U(N)$. Since $N$ is an ideal of $L$, the inner derivations
$\delta_u$, $\delta_v$, $\delta_w$ of $U(L)$ defined by $u,v,w$,
respectively, are such that $\delta_u(U(N))\subseteq U(N)$,
$\delta_v(U(N))\subseteq U(N)$, $\delta_w(U(N))\subseteq U(N)$. The
$k$-subalgebra of $U(L)$ generated by $U(N)$ and $w$ is
$U(N)[w;\delta_w]$. Since
$\delta_v(w)\in U(N)\subseteq U(N)[w;\delta_w]$, 
the $k$-subalgebra of $U(L)$ generated by $U(N)$ and
$\{w,v\}$ is $U(N)[w;\delta_w][v;\delta_v]$. Furthermore, since
$\delta_u(v)=w$ and $\delta_u(w)\in U(N)$,
\begin{equation}\label{eq:U(H)aspolynomials}
U(L)=U(N)[w;\delta_w][v;\delta_v][u;\delta_u].
\end{equation}
\begin{eqnarray}\label{eq:U(L)insideseries}
U(L)\hookrightarrow
U(N)((t_w;\delta_w))((t_v;\delta_v))((t_u;\delta_u)), & w\mapsto t_w^{-1},\ v\mapsto t_v^{-1},\ u\mapsto t_u^{-1}, \\
 &f\mapsto \varepsilon(f), \textrm{ for all } f\in U(N). \nonumber
\end{eqnarray}

In this setting, the next two lemmas are \cite[Lemmas~4.1,4.2]{FerreiraGoncalvesSanchez2}.

\begin{lem}\label{lem:commutativediagram}
There exists a commutative diagram of embeddings of $k$-algebras
\begin{equation*}\label{eq:diagramnotOre}
\xymatrixcolsep{0.0001cm}\xymatrix{U(L)=U(N)[w;\delta_w][v;\delta_v][u;\delta_u]\ar@{^{(}->}[rr]\ar@{^{(}->}[dd]
\ar@{^{(}->}[rd] & &
\mathfrak{D}(N)[w;\delta_w][v;\delta_v][u;\delta_u]\ar@{^{(}->}[dd]
\ar@{_{(}->}[ld]\\ &  \mathfrak{D}(L) \ar@{^{(}->}[dr] &\\
U(N)((t_w;\delta_w))((t_v;\delta_v))((t_u;\delta_u))\ar@{^{(}->}[rr]
& & \mathfrak{D}(N)((t_w;\delta_w))((t_v;\delta_v))((t_u;\delta_u))
& } 
\end{equation*}\qed
\end{lem}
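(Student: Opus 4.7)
The plan is to assemble the diagram in three stages: (a) extend the relevant derivations from $U(N)$ to $\mathfrak{D}(N)$, (b) build the two rings with $\mathfrak{D}(N)$-coefficients and check the coefficient inclusions, and (c) identify $\mathfrak{D}(L)$ with the division subring of $\mathfrak{D}(N)((t_w;\delta_w))((t_v;\delta_v))((t_u;\delta_u))$ generated by $U(L)$, so that the arrow $\mathfrak{D}(L)\hookrightarrow \mathfrak{D}(N)((t_w;\delta_w))((t_v;\delta_v))((t_u;\delta_u))$ exists by construction.

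First I would start from the inner derivations $\mathrm{ad}_u,\mathrm{ad}_v,\mathrm{ad}_w$ of $\mathfrak{D}(L)$. Since $N$ is a Lie ideal of $L$, the Leibniz rule ensures that they carry $U(N)$ into itself. Using Lichtman's Proposition~2.5 from \cite{Lichtmanuniversalfields}, recalled at the beginning of this section, $\mathfrak{D}(N)$ is the division subring of $\mathfrak{D}(L)$ generated by $U(N)$, so the restrictions of $\mathrm{ad}_u,\mathrm{ad}_v,\mathrm{ad}_w$ to the division subring generated by $U(N)$ give $k$-derivations $\delta_u,\delta_v,\delta_w$ of $\mathfrak{D}(N)$. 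With these derivations in hand I can form $\mathfrak{D}(N)[w;\delta_w][v;\delta_v][u;\delta_u]$ and $\mathfrak{D}(N)((t_w;\delta_w))((t_v;\delta_v))((t_u;\delta_u))$; the latter is a skew field, by iterated application of the invertibility criterion for pseudo-differential series over a division ring recalled before the lemma.

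Next I would write down the obvious arrows. The upper horizontal $U(L)\hookrightarrow \mathfrak{D}(N)[w;\delta_w][v;\delta_v][u;\delta_u]$ is the coefficient inclusion afforded by the PBW identification \eqref{eq:U(H)aspolynomials}. The lower horizontal arrow and the right-most vertical are the coefficient inclusion and the standard embedding $u\mapsto t_u^{-1}$, $v\mapsto t_v^{-1}$, $w\mapsto t_w^{-1}$, respectively. The diagonal $\mathfrak{D}(N)[w;\delta_w][v;\delta_v][u;\delta_u]\hookrightarrow \mathfrak{D}(L)$ sends the polynomial variables to the Lie elements $u,v,w\in L\subseteq \mathfrak{D}(L)$; it is a ring map because the $\delta_\bullet$ on $\mathfrak{D}(N)$ were obtained as restrictions of $\mathrm{ad}_u,\mathrm{ad}_v,\mathrm{ad}_w$, and it is injective because standard monomials in $\{u,v,w\}$ are linearly independent over $\mathfrak{D}(N)$ inside $\mathfrak{D}(L)$ (again by the result of Lichtman just cited).

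The main obstacle is the remaining diagonal $\mathfrak{D}(L)\hookrightarrow \mathfrak{D}(N)((t_w;\delta_w))((t_v;\delta_v))((t_u;\delta_u))$: since $\mathfrak{D}(L)$ is not characterised by a universal property, I must invoke Lichtman's concrete construction. The idea is to compose the series embedding \eqref{eq:U(L)insideseries} with the coefficient inclusion $U(N)\hookrightarrow \mathfrak{D}(N)$, placing $U(L)$ inside the division ring $\mathfrak{D}(N)((t_w;\delta_w))((t_v;\delta_v))((t_u;\delta_u))$; Lichtman's construction \cite{Lichtmanvaluationmethods} (see also \cite[Section~2.6]{Cohnskew}) realises $\mathfrak{D}(L)$ precisely as the division subring generated by $U(L)$ in a series ring of this shape, so the inclusion of $U(L)$ extends to the desired embedding of $\mathfrak{D}(L)$. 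Finally, commutativity of the two triangles and two squares is checked on the generating set $\{u,v,w\}\cup U(N)$ of $U(L)$: along every route, $u,v,w$ go to $u,v,w$ (or to $t_u^{-1},t_v^{-1},t_w^{-1}$ in the series route) and elements of $U(N)$ travel through the fixed inclusion $U(N)\hookrightarrow \mathfrak{D}(N)$; the equalities then propagate to $U(L)$, and by injectivity of the already-established arrows, to the whole diagram.
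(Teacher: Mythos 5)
Most of your architecture is sound and matches the intended one (the paper itself only quotes this statement from \cite[Lemmas~4.1, 4.2]{FerreiraGoncalvesSanchez2}): extending $\delta_u,\delta_v,\delta_w$ to $\mathfrak{D}(N)$ as restrictions of the inner derivations of $\mathfrak{D}(L)$, the coefficient inclusions, and the embedding $\mathfrak{D}(N)[w;\delta_w][v;\delta_v][u;\delta_u]\hookrightarrow\mathfrak{D}(L)$ via linear independence of standard monomials in $\{u,v,w\}$ over $\mathfrak{D}(N)$ are all fine. The gap is exactly at the step you yourself flag as the main obstacle. Lichtman's construction of $\mathfrak{D}(L)$ in \cite{Lichtmanvaluationmethods} (or \cite[Section~2.6]{Cohnskew}) is carried out inside a completion built from the filtration of $U(L)$; it is \emph{not} carried out inside the iterated pseudo-differential ring $\mathfrak{D}(N)((t_w;\delta_w))((t_v;\delta_v))((t_u;\delta_u))$, so you cannot quote it as saying that $\mathfrak{D}(L)$ ``is precisely the division subring generated by $U(L)$ in a series ring of this shape.'' For a domain that is not Ore (and $U(L)$ need not be Ore here), the division subring it generates genuinely depends on the ambient division ring, so the existence of an embedding $\mathfrak{D}(L)\hookrightarrow\mathfrak{D}(N)((t_w;\delta_w))((t_v;\delta_v))((t_u;\delta_u))$ extending \eqref{eq:U(L)insideseries} is precisely the nontrivial content of the lemma; as written, your argument assumes it.

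The missing idea is to route everything through the ring $T=\mathfrak{D}(N)[w;\delta_w][v;\delta_v][u;\delta_u]$ and use that it is an iterated Ore extension (by derivations) of a division ring, hence a Noetherian domain and therefore an Ore domain, even though $U(L)$ itself may not be. For an Ore domain the division subring it generates in \emph{any} division ring containing it is canonically its Ore ring of fractions $Q_{cl}(T)$. Applying this inside $\mathfrak{D}(L)$: the division subring generated by $T$ contains $U(L)$ and is contained in $\mathfrak{D}(L)$, and since $\mathfrak{D}(L)$ is by definition generated by $U(L)$, it equals $\mathfrak{D}(L)$; hence $\mathfrak{D}(L)\cong Q_{cl}(T)$ compatibly with your diagonal $T\hookrightarrow\mathfrak{D}(L)$. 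Now the universal property of the Ore localization extends the standard embedding $T\hookrightarrow\mathfrak{D}(N)((t_w;\delta_w))((t_v;\delta_v))((t_u;\delta_u))$ (which is a division ring in which the nonzero elements of $T$ become invertible) to the required embedding of $\mathfrak{D}(L)$, and commutativity of all triangles and squares follows because every map in the diagram is determined by its effect on $U(N)\cup\{u,v,w\}$ together with the injectivity of the arrows already constructed. With this replacement for your appeal to Lichtman's construction, the rest of your proposal goes through.
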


\begin{lem} \label{lem:morphismsofseries}
Let $\varepsilon\colon U(N)\rightarrow k$ denote the augmentation
map. The following hold true.
\begin{enumerate}[\normalfont (1)]
\item There exists a  $k$-algebra homomorphism
$$\Phi_w\colon U(N)((t_w;\delta_w))\rightarrow k((t_z)),\quad \sum_{i}t_w^if_i\mapsto \sum_{i}t_z^i\varepsilon(f_i),$$
where $f_i\in U(N)$ for each $i$.
\item There exists a $k$-algebra homomorphism
$$\Phi_v\colon U(N)((t_w;\delta_w))((t_v;\delta_v))
\rightarrow k((t_z))((t_y)),\quad \sum_{i}t_v^ig_i\mapsto
\sum_{i}t_y^i\Phi_w(g_i),$$ where $g_i\in U(N)((t_w;\delta_w))$ for
each $i$.
\item There exists a $k$-algebra homomorphism
\begin{eqnarray*}
\Phi_u\colon U(N)((t_w;\delta_w))((t_v;\delta_v))((t_u;\delta_u))
&\rightarrow& k((t_z))((t_y))((t_x;\delta_x)), \\
\sum_{i}t_u^ih_i &\mapsto& \sum_{i}t_x^i\Phi_v(h_i), \end{eqnarray*}
where $h_i\in U(N)((t_w;\delta_w))((t_y;\delta_y))$ for each $i$ and
extending the embeddings of \eqref{eq:U(H)insideseries} and \eqref{eq:U(L)insideseries}. \qed
\end{enumerate}
\end{lem}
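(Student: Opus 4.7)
The plan is to define each of $\Phi_w$, $\Phi_v$, $\Phi_u$ by the stated coefficient-wise formulas. Additive $k$-linearity is immediate, so the real content is multiplicativity, which in each case reduces to checking that the defining commutation relation of the source pseudo-differential operator ring, namely $a t_\star^{-1} = t_\star^{-1} a + \delta_\star(a)$, is compatible with the proposed map, together with continuity with respect to the $t_\star$-adic topologies.

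For (1), the key input is the identity $\varepsilon\circ\delta_w=0$ on $U(N)$. This holds because $N$ is a Lie ideal of $L$, so $\delta_w(N)\subseteq N$; since $\delta_w$ is a derivation and $\ker\varepsilon=N\cdot U(N)$, one has $\delta_w(\ker\varepsilon)\subseteq\ker\varepsilon$, and $\delta_w$ annihilates $k$. Applying $\Phi_w$ to the source relation gives $\varepsilon(a)t_z^{-1}=t_z^{-1}\varepsilon(a)+\varepsilon(\delta_w(a))$, and both sides equal $t_z^{-1}\varepsilon(a)$ in the commutative ring $k((t_z))$. Equivalently, the expansion $at_w=\sum_{i\geq 1}t_w^i(-1)^{i-1}\delta_w^{i-1}(a)$ collapses under $\varepsilon$ to its first term.

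For (2), the same reasoning yields $\varepsilon\circ\delta_v=0$ on $U(N)$. In addition, the extension of $\delta_v$ to $U(N)((t_w;\delta_w))$ sends $t_w^{-1}$ to $-[v,w]$, and because $\rho(w)=z$ is central in $H$ we have $[v,w]\in N$; hence $\delta_v(t_w^{-1})\in\ker\Phi_w$. Together with the derivation property and continuity in the $t_w$-adic topology, this gives $\Phi_w\circ\delta_v=0$ on all of $U(N)((t_w;\delta_w))$. Multiplicativity of $\Phi_v$ then follows because the target $k((t_z))((t_y))$ is commutative. For (3), the target is genuinely noncommutative, so the content is the intertwining $\Phi_v\circ\delta_u=\delta_x\circ\Phi_v$. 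I would verify it on the generators of the coefficient ring: on $f\in U(N)$ both sides vanish as above; on $t_w^{-1}$ the relation $\rho([w,u])=[z,x]=0$ gives $[w,u]\in N$, so $\Phi_v\circ\delta_u$ kills $t_w^{-1}$, while $\delta_x$ kills $\Phi_v(t_w^{-1})=t_z^{-1}$ since $z$ is central in $H$; on $t_v^{-1}$, the identity $\delta_u(v)=w$ yields $\Phi_v(\delta_u(t_v^{-1}))=\Phi_v(t_w^{-1})=t_z^{-1}$, matching $\delta_x(\Phi_v(t_v^{-1}))=\delta_x(t_y^{-1})=t_z^{-1}$.

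The main technical obstacle is the passage from this verification on generators to arbitrary Laurent series: one must argue that $\Phi_w$, $\Phi_v$, $\Phi_u$ are continuous for the respective $t$-adic topologies, and that the iterated derivations $\delta_w,\delta_v,\delta_u$ extend uniquely and compatibly to the successive pseudo-differential completions. This is a standard functoriality argument for the Malcev-type embeddings described just before the lemma, once one has set up the derivations so that the embedding of $U(L)$ displayed in \eqref{eq:U(L)insideseries} is a ring homomorphism.
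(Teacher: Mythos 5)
Your argument is correct and is the natural one: the coefficient-wise maps between pseudo-differential series rings are homomorphisms precisely because the augmentation kills the relevant derivation terms ($\varepsilon\circ\delta_w=0$, $\Phi_w\circ\delta_v=0$, using $[v,w],[w,u]\in N$) and because of the intertwining $\Phi_v\circ\delta_u=\delta_x\circ\Phi_v$ checked on $U(N)$, $t_w^{-1}$, $t_v^{-1}$, extended by the derivation property and $t$-adic continuity. The paper itself offers no proof here—the lemma is quoted from \cite[Lemmas~4.1,4.2]{FerreiraGoncalvesSanchez2}—and your verification is essentially the standard functoriality argument that the cited source carries out, so there is no substantive divergence to report.
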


Now we turn our attention to $k$-involutions of $U(L)$ induced from the ones in $L$.

\begin{lem}\label{lem:involutionbehaveswell}
Let $k$ be a field and $L$ be a Lie $k$-algebra generated by two elements $u,v$. Let
$H=\langle x,y\mid [[y,x],x]=[[y,x],y]=0\rangle$ be the Heisenberg
Lie $k$-algebra. Suppose that there exists a Lie $k$-algebra
homomorphism $L\stackrel{\rho}{\rightarrow} H,\
u\mapsto x,\ v\mapsto y.$
Let $N=\ker \rho$. 
Consider the induced $k$-algebra homomorphism $\psi\colon U(L)\rightarrow U(H)$.
Suppose that $*\colon L\rightarrow L$ is an involution in $L$ such that
$N$ is a $*$-invariant ideal of $L$ and call again $*$ the induced involution on $H\cong L/N$.
Then $\psi(f^*)=\psi(f)^*$ for all $f\in U(L)$.
\end{lem}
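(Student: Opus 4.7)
The plan is to exploit the universal property of $U(L)$ and $U(H)$ together with the fact that both $\psi$ and $*$ are characterized by their behavior on the generating Lie algebras.

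First I would recall the standard construction of the induced involution on universal enveloping algebras: the $k$-involution $*$ on $L$ extends uniquely to a $k$-involution on $U(L)$, where on monomials it acts by $(x_1x_2\dotsb x_n)^*=x_n^*\dotsb x_2^*x_1^*$ for $x_i\in L$ (this can be obtained, for instance, by invoking the universal property for the $k$-linear map $L\to U(L)^{\mathrm{op}}$, $x\mapsto x^*$, which is a Lie algebra homomorphism since $[x,y]^*=[y^*,x^*]=-[x^*,y^*]$ equals the bracket in $L^{\mathrm{op}}$). The same argument applied to $H$ gives an involution, still denoted $*$, on $U(H)$. Because $N$ is $*$-invariant, the induced quotient involution on $L/N$ is a $k$-involution, and under the identification $L/N\cong H$ (via $\rho$) it coincides with the given involution of $H$; hence $\rho(x^*)=\rho(x)^*$ for every $x\in L$.

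Next I would consider the map $g\colon U(L)\to U(H)$ defined by $g(f)=\psi(f^*)^*$. A direct computation shows $g$ is a $k$-algebra homomorphism: for $f,h\in U(L)$,
\begin{equation*}
g(fh)=\psi((fh)^*)^*=\psi(h^*f^*)^*=\bigl(\psi(h^*)\psi(f^*)\bigr)^*=\psi(f^*)^*\psi(h^*)^*=g(f)g(h),
\end{equation*}
and $g$ is $k$-linear with $g(1)=1$. Restricting to $x\in L\subseteq U(L)$, one has $g(x)=\psi(x^*)^*=\rho(x^*)^*=(\rho(x)^*)^*=\rho(x)=\psi(x)$. Since $L$ generates $U(L)$ as a $k$-algebra and both $g$ and $\psi$ are algebra homomorphisms agreeing on $L$, they are equal on $U(L)$. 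Therefore $\psi(f^*)^*=\psi(f)$, i.e.\ $\psi(f^*)=\psi(f)^*$, for every $f\in U(L)$.

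There is really no hard step here; the only subtlety is making sure $*$ extends to $U(L)$ as an \emph{anti}-homomorphism (so that $g$ comes out as a homomorphism rather than an anti-homomorphism), which is a standard consequence of the Lie identity $[x,y]^*=[y^*,x^*]$ combined with the universal property. Everything else is a one-line check on generators.
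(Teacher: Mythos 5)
Your proof is correct, but it follows a different route from the paper's. You define $g=\ast\circ\psi\circ\ast$, observe that the two anti-homomorphisms cancel to make $g$ a unital $k$-algebra homomorphism, and then conclude $g=\psi$ because the two agree on the Lie algebra $L$, which generates $U(L)$; the only inputs are the standard extension of a Lie $k$-involution to $U(-)$ and the compatibility $\rho(x^*)=\rho(x)^*$ coming from the hypothesis that $N$ is $\ast$-invariant. The paper instead argues by direct computation: it writes a general element of $U(L)$ in the PBW form $\sum_{l,m,n} u^lv^mw^nf_{lmn}$ with $f_{lmn}\in U(N)$, uses $\psi(u^*)=x^*$, $\psi(v^*)=y^*$, $\psi(w^*)=z^*$ and $\psi(f^*)=\varepsilon(f)$ for $f\in U(N)$, and checks that $\psi(F)^*$ and $\psi(F^*)$ both equal $\sum (z^*)^n(y^*)^m(x^*)^l\varepsilon(f_{lmn})$. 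Your argument is more economical and more general — it never uses that $L$ is two-generated, nor the specific PBW decomposition over $U(N)$, so it proves the statement for an arbitrary Lie algebra surjection with $\ast$-invariant kernel — while the paper's computation stays inside the explicit monomial framework it has just set up for the series arguments that follow, making the action on those monomials visible. Either proof is acceptable; yours buys generality and brevity at no cost in rigor.
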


\begin{proof}
Define $w=[v,u]$ and $z=[y,x]$.
Since $*$ is the induced involution on $H\cong L/N$, then $\psi(u^*)=\psi(u)^*=x^*$,
$\psi(v^*)=\psi(v)^*=y^*$, $\psi(w^*)=\psi(w)^*=z^*$ and
$\psi(f^*)=\psi(f)^*=\varepsilon(f)\in k$ for all $f\in U(N)$.

Given $\sum\limits_{l,m,n\geq 0} u^lv^mw^nf_{lmn}$ with $f_{lmn}\in
U(N)$, we have 
\begin{eqnarray*}\psi\Big(\sum\limits_{l,m,n\geq 0} u^lv^mw^nf_{lmn}\Big)^* & =&
\Big(\sum\limits_{l,m,n\geq 0} x^ly^mz^n\varepsilon(f_{lmn})\Big)^* \\ & =&
\sum\limits_{l,m,n\geq 0} (z^*)^n(y^*)^m(x^*)^l\varepsilon(f_{lmn})
\end{eqnarray*}
On the other hand,
\begin{eqnarray*}
\psi\Big(\Big(\sum\limits_{l,m,n\geq 0} u^lv^mw^n f_{lmn}\Big)^*\Big) & = &
 \psi\Big(\sum\limits_{l,m,n\geq 0} f_{lmn}^*(w^*)^n(v^*)^m
(u^*)^l\Big) \\ & = &\sum\limits_{l,m,n\geq 0} (z^*)^n(y^*)^m(x^*)^l\varepsilon(f_{lmn}),
\end{eqnarray*}
as desired.
\end{proof}

It is well known that any $k$-involution on $L$ can be extended to 
a $k$-involution of $U(L)$. Moreover, 
it was proved in \cite[Proposition~5]{Cimpricfreefieldmany} that any $k$-involution of $L$
can be uniquely extended to a $k$-involution of $\mathfrak{D}(L)$. See also 
\cite[Proposition~2.1]{FerreiraGoncalvesSanchez2}.
With this in mind, we are ready to prove the main result of this section.

\begin{theo}\label{theo:freesymmetricresiduallynilpotent}
Let $k$ be a field of  characteristic zero, let $H=\langle x,y\mid
[[y,x],x]=[[y,x],y]=0\rangle$ be the Heisenberg Lie $k$-algebra and
let $L$  be a Lie $k$-algebra generated by two elements $u,v$.
Suppose that there exists a Lie $k$-algebra homomorphism 
\begin{equation*} L{\rightarrow} H,\ u\mapsto x,\ v\mapsto y,
\end{equation*}
with kernel $N$.
Let $w=[v,u]$, $V=\frac{1}{2}(uv+vu)$, and consider the following
elements of $\mathfrak{D}(L)$:
$$S=(V-\frac{1}{3}w)(V+\frac{1}{3}w)^{-1},\qquad
T=(w+v^2)^{-1}(w-v^2)S(w+v^2)(w-v^2)^{-1},$$ 
$$S_1=w^{-1}\Big((V-\frac{1}{3}w)(V+\frac{1}{3}w)^{-1}+ (V-\frac{1}{3}w)^{-1}(V+\frac{1}{3}w)\Big) w^{-1},$$
$$T_1=(w+v^2)^{-1}(w-v^2)S_1(w+v^2)(w-v^2)^{-1},$$
 $$T_2=(w^2+v^3)^{-1}(w^2-v^3)S_1(w^2+v^3)(w^2-v^3)^{-1}.$$
Then the following hold true.
\begin{enumerate}[\rm(1)]
\item The $k$-subalgebra of
$\mathfrak{D}(L)$
generated by $\{S,S^{-1},T,T^{-1}\}$  is the free group $k$-algebra on the set
$\{S,T\}$.

\item Suppose that $*\colon L\rightarrow L$ is an involution in $L$ such that
$N$ is a $*$-invariant ideal of $L$ and that the induced involution on $H\cong L/N$
is one of the involutions in Lemma~\ref{lem:equivalentinvolutionHeisenbergalgebra}~(2) and (3).
Then the following hold true.
\begin{enumerate}[\rm(a)]
\item The elements $S_1S_1^*$ and $T_1T_1^*$ are symmetric.
\item The $k$-subalgebra of $\mathfrak{D}(L)$ 
generated by $\{S_1S_1^*, T_1T_1^*\}$ is the free 
 $k$-algebra on $\{S_1S_1^*, T_1T_1^*\}$.
\item  The $k$-subalgebra of $\mathfrak{D}(L)$ generated by 
$$\{1+S_1S_1^*, (1+S_1S_1^*)^{-1}, 1+T_1T_1^*,(1+T_1T_1^*)^{-1}\}$$
 is the free group $k$-algebra on the set $\{1+S_1S_1^*, 1+T_1T_1^*\}$.
\end{enumerate}

\item Suppose that $*\colon L\rightarrow L$ is an involution in $L$ such that
$N$ is a $*$-invariant ideal of $L$ and that the induced involution on $H\cong L/N$
is one of the involutions in Lemma~\ref{lem:equivalentinvolutionHeisenbergalgebra}~(1).
Then the following hold true.
\begin{enumerate}[\rm(a)]
\item The elements $S_1S_1^*$ and $T_2T_2^*$ are symmetric.
\item The $k$-subalgebra of $\mathfrak{D}(L)$ 
generated by $\{S_1S_1^*, T_2T_2^*\}$ is the free 
 $k$-algebra on $\{S_1S_1^*, T_2T_2^*\}$.
\item  The $k$-subalgebra of $\mathfrak{D}(L)$ generated by 
$$\{1+S_1S_1^*, (1+S_1S_1^*)^{-1}, 1+T_2T_2^*,(1+T_2T_2^*)^{-1}\}$$
 is the free group $k$-algebra on the set $\{1+S_1S_1^*, 1+T_2T_2^*\}$.
\end{enumerate}
\end{enumerate}
\end{theo}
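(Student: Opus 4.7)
The approach, following \cite[Section~4]{FerreiraGoncalvesSanchez2}, is to lift the freeness assertions of Theorem~\ref{theo:freegroupHeisenberg} from $\mathfrak{D}(H)$ back to $\mathfrak{D}(L)$ via the embedding of $\mathfrak{D}(L)$ into the iterated pseudo-differential series ring from Lemma~\ref{lem:commutativediagram} and the specialization homomorphism $\Phi_u$ from Lemma~\ref{lem:morphismsofseries}(3). First I would check that the five elements $S,T,S_1,T_1,T_2$ are well defined in $\mathfrak{D}(L)$: the homomorphism $\psi\colon U(L)\to U(H)$ sends $V\pm\tfrac{1}{3}w$, $w\pm v^2$ and $w^2\pm v^3$ to $\bar V\pm\tfrac{1}{3}z$, $z\pm y^2$ and $z^2\pm y^3$ respectively (where $\bar V=\tfrac{1}{2}(xy+yx)$), and these were shown nonzero during the proof of Theorem~\ref{theo:freegroupHeisenberg}; hence they are nonzero in $U(L)$ too, and thus invertible in $\mathfrak{D}(L)$.

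Next, working inside $\mathfrak{D}(N)((t_w;\delta_w))((t_v;\delta_v))((t_u;\delta_u))$, into which $\mathfrak{D}(L)$ embeds by Lemma~\ref{lem:commutativediagram}, each of these six denominators has a scalar (hence invertible) leading coefficient under the natural order in $t_u,t_v,t_w$, so by the series-inversion criterion recalled in Section~\ref{sec:generalfiltrations} their inverses already lie in the subring $U(N)((t_w;\delta_w))((t_v;\delta_v))((t_u;\delta_u))$. Consequently $S,T,S_1,T_1,T_2$ themselves lie in this smaller subring, on which $\Phi_u$ is defined. Since the composite $U(L)\hookrightarrow U(N)((\cdots))\stackrel{\Phi_u}{\longrightarrow}k((t_z))((t_y))((t_x;\delta_x))$ coincides with $\psi$ followed by the inclusion of \eqref{eq:U(H)insideseries}, the $\Phi_u$-images of $S,T,S_1,T_1,T_2$ are exactly the elements bearing the same names in $\mathfrak{D}(H)$ provided by Theorem~\ref{theo:freegroupHeisenberg}. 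Part (1) then follows by pullback: any nontrivial reduced group-algebra expression in $\{S,S^{-1},T,T^{-1}\}$ that vanished in $\mathfrak{D}(L)$ would be an element of $U(N)((\cdots))$, and $\Phi_u$ would carry it to the same nontrivial vanishing expression in $\mathfrak{D}(H)$, contradicting Theorem~\ref{theo:freegroupHeisenberg}(1).

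For parts (2) and (3), the elements $S_1S_1^*$ and $T_iT_i^*$ are automatically symmetric in $\mathfrak{D}(L)$, since $(aa^*)^*=aa^*$. The $*$-invariance of $N$ lets us apply Lemma~\ref{lem:involutionbehaveswell} and, by the same series-inversion argument (applied now to $V^*\pm\tfrac{1}{3}w^*$, $w^*\pm(v^*)^2$, $(w^*)^2\pm(v^*)^3$), promote the $*$-equivariance of $\psi$ to the $*$-invariant subring of $U(N)((\cdots))$ generated by $u,v,w$, their $*$-images, $U(N)$ and all the formal inverses needed to express $S_1,T_i,S_1^*,T_i^*$. Thus $\Phi_u(S_1^*)=(S_1)^*$ inside $\mathfrak{D}(H)$, which equals $S_1$ itself by Theorem~\ref{theo:freegroupHeisenberg}(2)(a) or (3)(a), the induced involution on $H$ being one of those for which $S_1$ is symmetric; analogously for $T_1$ in case (2) and $T_2$ in case (3). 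Hence $S_1S_1^*\mapsto S_1^2$ and $T_iT_i^*\mapsto T_i^2$ under $\Phi_u$, and the freeness of $\{S_1^2,T_1^2\}$ (respectively $\{S_1^2,T_2^2\}$) as a free $k$-algebra and of $\{1+S_1^2,1+T_1^2\}$ (respectively $\{1+S_1^2,1+T_2^2\}$) as a free group $k$-algebra in $\mathfrak{D}(H)$, guaranteed by Theorem~\ref{theo:freegroupHeisenberg}(2)(d)(e) and (3)(d)(e), pull back by the same nontrivial-relation argument to yield (2)(b)(c) and (3)(b)(c).

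The main technical obstacle is the promotion step just sketched: namely, verifying that the $*$-equivariance of Lemma~\ref{lem:involutionbehaveswell}, which is stated only for $\psi\colon U(L)\to U(H)$, persists after adjoining to $U(L)$ formal inverses (inside $U(N)((\cdots))$) of the $*$-images of our six denominators. This requires careful bookkeeping with the skew-commutation rules for $t_u,t_v,t_w$ and with the action of $*$ on $L$, to confirm that the two $*$-invariant subrings of $U(N)((\cdots))$ and $k((t_z))((t_y))((t_x;\delta_x))$ generated in parallel are intertwined by $\Phi_u$; once this is established, all remaining steps are clean applications of Theorem~\ref{theo:freegroupHeisenberg}.
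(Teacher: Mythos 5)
Your overall strategy is the same as the paper's: embed $\mathfrak{D}(L)$ into $\mathfrak{D}(N)((t_w;\delta_w))((t_v;\delta_v))((t_u;\delta_u))$ via Lemma~\ref{lem:commutativediagram}, verify that the relevant elements already lie in the smaller ring $U(N)((t_w;\delta_w))((t_v;\delta_v))((t_u;\delta_u))$ on which $\Phi_u$ is defined, identify their images with the elements of Theorem~\ref{theo:freegroupHeisenberg}, and pull relations back. Parts (1), (2)(a)--(b), (3)(a)--(b) are fine in outline, granted that the invertibility of the starred denominators $(V\pm\tfrac{1}{3}w)^*$, $(w\pm v^2)^*$, $(w^2\pm v^3)^*$ is actually checked (the paper's Claims~2 and~3); note that your phrase ``scalar leading coefficient'' is not quite right even for the unstarred elements — for $V\pm\tfrac{1}{3}w$ the $t_u$-leading coefficient is $t_v^{-1}$, and for $(V\pm\tfrac{1}{3}w)^*$ it is $t_v^{-1}+n_v$ — but these are invertible in the coefficient ring, so the conclusion stands. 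The $*$-equivariance ``promotion'' you flag as the main obstacle is in fact immediate once those starred denominators are invertible in the $U(N)$-coefficient series ring, since a ring homomorphism sends inverses to inverses and Lemma~\ref{lem:involutionbehaveswell} gives equivariance on $U(L)$.

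The genuine gap is in (2)(c) and (3)(c). Pulling back a free group algebra relation in $\{1+S_1S_1^*,(1+S_1S_1^*)^{-1},1+T_iT_i^*,(1+T_iT_i^*)^{-1}\}$ requires the inverses $(1+S_1S_1^*)^{-1}$ and $(1+T_iT_i^*)^{-1}$ themselves to lie in $U(N)((t_w;\delta_w))((t_v;\delta_v))((t_u;\delta_u))$: a word such as $(1+S_1S_1^*)(1+T_iT_i^*)^{-1}(1+S_1S_1^*)^{-1}(1+T_iT_i^*)$ cannot have its inverses cleared by multiplying on the left and right, so $\Phi_u$ cannot be applied to a hypothetical relation unless these inverses are in its domain. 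Invertibility in $\mathfrak{D}(L)$ or in $\mathfrak{D}(N)((\cdots))$ does not give this, and it is not a formal consequence of (b): in a division ring, freeness of $\{A,B\}$ does not by itself imply that $\{1+A,(1+A)^{-1},1+B,(1+B)^{-1}\}$ generates a free group algebra — this is precisely why the paper needs Theorem~\ref{theo:freegroupgradedOre}, Proposition~\ref{prop:freeobjecthomogeneous} and Theorem~\ref{coro:divisionrings} elsewhere. The paper closes this step with a substantial computation you omit: it shows $S_1$ and $S_1^*$ have the form $2t_w^2+(\text{series in positive powers of }t_u)$, and $T_i,T_i^*$ the form $2t_w^2+(\text{positive powers of }t_v)+(\text{positive powers of }t_u)$, so that $1+S_1S_1^*$ and $1+T_iT_i^*$ are of the form $1+4t_w^4+\cdots$ with invertible lowest-order part, hence invertible in the $U(N)$-coefficient series ring. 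Your proposal would need to add exactly this analysis (or an equivalent argument) for (2)(c) and (3)(c) to go through.
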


\begin{proof}
 Define $z=[y,x]\in H$. Consider the embedding $U(H)\hookrightarrow
k((t_z))((t_y))((t_x;\delta_x))$ given in
\eqref{eq:U(H)insideseries}. Since $k((t_z))((t_y))((t_x;\delta_x))$
is a division $k$-algebra and $U(H)$ is an Ore domain, it extends to
an embedding $\mathfrak{D}(H)\hookrightarrow
k((t_z))((t_y))((t_x;\delta_x))$.

Consider  the embedding
$U(L)\hookrightarrow
U(N)((t_w;\delta_w))((t_v;\delta_v))((t_u;\delta_u))$ given in
\eqref{eq:U(L)insideseries}. Let $\Phi_u\colon
U(N)((t_w;\delta_w))((t_v;\delta_v))((t_u;\delta_u)) \rightarrow
k((t_z))((t_y))((t_x;\delta_x))$  be the homomorphism given in
Lemma~\ref{lem:morphismsofseries}.

Define the following elements in $\mathfrak{D}(H)$:
$V_H=\frac{1}{2}(xy+yx)$,
$$S_H=(V_H-\frac{1}{3}z)(V_H+\frac{1}{3}z)^{-1},$$
$$T_H=(z+y^2)^{-1}(z-y^2)S_H(z+y^2)(z-y^2)^{-1}.$$
$${S_1}_H=z^{-1}\Big((V_H-\frac{1}{3}z)(V_H+\frac{1}{3}z)^{-1}+ (V_H-\frac{1}{3}z)^{-1}(V_H+\frac{1}{3}z)\Big) z^{-1},$$
$${T_1}_H=(z+y^2)^{-1}(z-y^2){S_1}_H(z+y^2)(z-y^2)^{-1},$$
$${T_2}_H=(z^2+y^3)^{-1}(z^2-y^3){S_1}_H(z^2+y^3)(z^2-y^3)^{-1},$$

\medskip

\underline{Claim~1}: The elements $V-\frac{1}{3}w$, $V+\frac{1}{3}w$,
$w+v^2$, $w-v^2$, $w^2+v^3$ and $w^2-v^3$ are all invertible in
$U(N)((t_w;\delta_w))((t_v;\delta_v))((t_u;\delta_u))$.

We proceed to prove claim~1. We begin with the element
$w+v^2=t_w^{-1}+t_v^{-2}$. As a series in $t_v$, this element is
invertible in $U(N)((t_w;\delta_w))((t_v;\delta_v))$ if and only if
the coefficient of $t_v^{-2}$ is invertible in the ring of
coefficients  $U(N)((t_w;\delta_w))$. The coefficient is $1$, which
is clearly invertible. Similarly, it can be proved that $w-v^2$, $w^2+v^3$, and $w^2-v^3$
are invertible. Now we show that $V+\frac{1}{3}w$ is invertible in
$U(N)((t_w;\delta_w))((t_v;\delta_v))((t_u;\delta_u))$. First we
obtain an expression of $V+\frac{1}{3}w$ as a series in $t_u$.
\begin{eqnarray}
V+\frac{1}{3}w & = & \frac{1}{2} (uv+vu) +\frac{1}{3}w \nonumber\\ 
& = & \frac{1}{2}(uv+[v,u] +uv) + \frac{1}{3}w \nonumber \\ 
& = & \frac{1}{2}w+  uv + \frac{1}{3}w \nonumber \\ 
& = & \frac{5}{6}w +uv \nonumber\\ 
& = & \frac{5}{6}t_w^{-1} +t_u^{-1}t_v^{-1}  \label{eq:coefseries0}.
\end{eqnarray}
Thus, as a series in $t_u$, the coefficient of the least element in
the support  of $V+\frac{1}{3}w^3$ is $t_v^{-1}$, which is invertible in $U(N)((t_w;\delta_w))((t_v;\delta_v))$.
Hence $V+\frac{1}{3}w^3$ is invertible in
$U(N)((t_w;\delta_w))((t_v;\delta_v))((t_u;\delta_u))$.
The case of
$V-\frac{1}{3}w$ is shown analogously, and the claim is proved.

\medskip

(1) By Claim~1, $S$ and $T$ are invertible in  $U(N)((t_w;\delta_w))((t_v;\delta_v))((t_u;\delta_u))$
and  we  have $\Phi_u(V)=V_H$, $\Phi_u(S)=S_H$ and
$\Phi_u(T)=T_H$. By Theorem~\ref{theo:freegroupHeisenberg},  the $k$-algebra
generated by $\{S_H,\, S_H^{-1},\, T_H,\, T_H^{-1}\}$ is the free group $k$-algebra on the set
$\{S_H,T_H\}$. By Lemma~\ref{lem:commutativediagram},
$V$, $S$ and $T$ belong to $\mathfrak{D}(L)$. Therefore, the
elements $S$ and $T$ are nonzero and invertible in
$\mathfrak{D}(L)$, and the $k$-subalgebra generated by $\{S,\, S^{-1},\, T,\, T^{-1}\}$ is the free group
$k$-algebra on the set $\{S,T\}$.

(2) (a) It is clear.

(b) We will prove in  detail the result for the involution in 
Lemma~\ref{lem:equivalentinvolutionHeisenbergalgebra}(2), the
other case can be shown similarly. Let $n_u,n_v,n_w,\in N$ be such that $$u^*=u+n_u,\ v^*=v+n_v,\ w^*=-w+n_w$$

\underline{Claim~2:} The elements $$(V+\frac{1}{3}w)^*,\ (V-\frac{1}{3}w)^*,\ (w+v^2)^*,\ (w-v^2)^*$$
 belong to and are invertible in $U(N)((t_w;\delta_w))((t_v;\delta_v))((t_u;\delta_u))$. 

From Claim~2, it follows that the elements $S_1^*, T_1^*\in U(N)((t_w;\delta_w))((t_v;\delta_v))((t_u;\delta_u))$.
By Lemma~\ref{lem:involutionbehaveswell}, $\Phi_u(Z^*)=\Phi_u(Z)^*$ where $Z$ is any of the elements in Claim~2.
Thus, by Theorem~\ref{theo:freegroupHeisenberg}(2)(a), 
\begin{equation}\label{eq:imagephiu}
\Phi_u(S_1^*)=\Phi_u(S_1)^*=S_{1H}^*=S_{1H},\quad \Phi_u(T_1^*)=\Phi_u(T_1)^*=T_{1H}^*=T_{1H}.
\end{equation}
Hence $\Phi_u(S_1S_1^*)=S_{1H}^2$ and $\Phi_u(T_1T_1^*)=T_{1H}^*$. 
By Theorem~\ref{theo:freegroupHeisenberg}(2)(d), the $k$-algebra
generated by $\{S_{1H}^2, T_{1H}^2\}$ is the free algebra on $\{S_{1H}^2,T_{1H}^2\}$. Therefore the result follows.

We proceed to prove Claim~2.

\begin{align}\label{eq:coefseries1}
(V+\frac{1}{3}w)^* & =  \left(\frac{1}{2}(uv+vu)+\frac{1}{3}w \right)^* \nonumber \\
& =  \frac{1}{2}((u+n_u)(v+n_v)+(v+n_v)(u+n_u)) +\frac{1}{3}(-w+n_w)  \nonumber \\
& =  \frac{1}{2}(uv+vu+un_v+n_vu+n_uv+vn_u+n_un_v+n_vn_u) \nonumber \\
& \qquad \quad -\frac{1}{3}w+ \frac{1}{3}n_w  \nonumber \\
&=\frac{1}{2}(uv+uv+[v,u] +un_v+un_v+[n_v,u]+vn_u+vn_u \nonumber\\
&\qquad\quad+[n_u,v]+n_un_v+n_vn_u)-\frac{1}{3}w+ \frac{1}{3}n_w \nonumber  \\ 
& =  u(v+n_v)+ vn_u+\frac{1}{6}w+f_1 \nonumber\\
& =  t_u^{-1}(t_v^{-1}+n_v)+t_v^{-1}n_u+\frac{1}{6}t_w^{-1}+f_1, 
\end{align}
where $f_1\in U(N)$.

\begin{align}
(V-\frac{1}{3}w)^* & =  \left(\frac{1}{2}(uv+vu)-\frac{1}{3}w \right)^* \nonumber\\
& =  \frac{1}{2}((u+n_u)(v+n_v)+(v+n_v)(u+n_u)) -\frac{1}{3}(-w+n_w)  \nonumber\\
& =  \frac{1}{2}(uv+vu+un_v+n_vu+n_uv+vn_u+n_un_v+n_vn_u) \nonumber\\
& \qquad \quad +\frac{1}{3}w- \frac{1}{3}n_w  \nonumber\\
&=\frac{1}{2}(uv+uv+[v,u] +un_v+un_v+[n_v,u]+vn_u+vn_u\nonumber\\ 
&\qquad \quad +[n_u,v]+n_un_v+n_vn_u)+\frac{1}{3}w- \frac{1}{3}n_w  \nonumber\\ 
& =  u(v+n_v)+ vn_u+\frac{5}{6}w+f_2 \nonumber\\
& =  t_u^{-1}(t_v^{-1}+n_v)+t_v^{-1}n_u+\frac{5}{6}t_w^{-1}+f_2, \label{eq:coefseries2}
\end{align}
where $f_2\in U(N)$. Note that the element $(t_v^{-1}+n_v)$ is invertible in $U(N)((t_w;\delta_w))((t_v;\delta_v))$.
Thus $(V+\frac{1}{3})^*$ and $(V-\frac{1}{3})^*$ are invertible in $U(N)((t_w;\delta_w))((t_v;\delta_v))((t_u;\delta_u))$.

There exist $f_3,f_4\in U(N)$ such that

\begin{eqnarray}
(w+v^2)^*&=& -w+n_w+(v+n_v)^2 \nonumber\\
& = & v^2+vn_v+n_vv+n_v^2-w+n_w \nonumber\\
& = & v^2+2vn_v-w+[n_v,v]+ n_v^2+n_w \nonumber\\
& = & t_v^{-2} +2t_v^{-1}n_v -t_w^{-1} + f_3 \label{eq:coefseries3}
\end{eqnarray}

\begin{eqnarray}
(w-v^2)^*&=& -w+n_w-(v+n_v)^2 \nonumber\\
& = & -v^2-vn_v-n_vv-n_v^2-w+n_w \nonumber\\
& = & -v^2-2vn_v-w-[n_v,v]- n_v^2+n_w \nonumber\\
& = & -t_v^{-2} -2t_v^{-1}n_v -t_w^{-1} + f_4 \label{eq:coefseries4}
\end{eqnarray}

The elements $(w+v^2)^*,(w-v^2)^*$ are invertible because the coefficient of
$t_v^{-2}$ is $\pm1$, which is clearly invertible. And the claim is proved.

\bigskip

(c) By Theorem~\ref{theo:freegroupHeisenberg}(2)(d),  the $k$-subalgebra
generated by $\{1+S_{1H}^2, (1+S_{1H})^{-1}, 1+T_{1H}^2, (1+T_{1H})^{-1}\}$ is the free group $k$-algebra
on the set $\{1+S_{1H}^2, 1+T_{1H}^2\}$. Moreover, by \eqref{eq:imagephiu}, $\Phi_u(1+S_1S_1^*)=1+S_{1H}^2$ and
$\Phi_u(1+T_1T_1^*)=1+T_{1H}^2$. Therefore it is enough to prove that the elements
$1+S_1S_1^*$ and $1+T_1T_1^*$ are invertible in $U(N)((t_w;\delta_w))((t_v;\delta_v))((t_u;\delta_u))$.

By \eqref{eq:coefseries0}, $V-\frac{1}{3}w$ and $V+\frac{1}{3}w$ are series of the form
$t_u^{-1}t_v^{-1}(1+h_1)$ where $h_1$ is a series on positive powers of $t_u$ with
coefficients in $U(N)((t_w;\delta_w))((t_v;\delta_v))$. Hence
$(V-\frac{1}{3}w)^{-1}$ and $(V+\frac{1}{3}w)^{-1}$ are series of the form
$t_vt_u(1+h_2)$ where $h_2$ is a series on positive powers of $t_u$ with
coefficients in $U(N)((t_w;\delta_w))((t_v;\delta_v))$. Using that $w^{-1}=t_w$,
we obtain that $S_1$ is a series of the form $2t_w^2+h_3$ where $h_3$
is a series on positive powers of $t_u$ with
coefficients in $U(N)((t_w;\delta_w))((t_v;\delta_v))$. 

By \eqref{eq:coefseries1}, \eqref{eq:coefseries2}, $(V-\frac{1}{3}w)^*$ and 
$(V+\frac{1}{3}w)^*$ are series of the form $t_u^{-1}(t_v^{-1}+n_v)(1+h_4)$, 
where $h_4$ is a series on positive powers of $t_u$ with
coefficients in $U(N)((t_w;\delta_w))((t_v;\delta_v))$. 
Hence
$((V-\frac{1}{3}w)^*)^{-1}$ and $((V+\frac{1}{3}w)^*)^{-1}$ are series of the form
$(t_v^{-1}+n_v)^{-1}t_u(1+h_5)$ where $h_5$ is a series on positive powers of $t_u$ with
coefficients in $U(N)((t_w;\delta_w))((t_v;\delta_v))$. Using that $(w^*)^{-1}=(-t_w^{-1}+n_w)^{-1}$,
we obtain that $S_1^*$ is a series of the form $2t_w^2+h_6$ where $h_6$
is a series on positive powers of $t_u$ with
coefficients in $U(N)((t_w;\delta_w))((t_v;\delta_v))$. From these considerations
it follows that $1+S_1S_1^*$ is a series of the form $1+4t_w^4+h_7$  where $h_7$
is a series on positive powers of $t_u$ with
coefficients in $U(N)((t_w;\delta_w))((t_v;\delta_v))$. Now $1+4t_w^4$ is
invertible in $U(N)((t_w;\delta_w))((t_v;\delta_v))$, and $1+S_1S_1^*$ is
therefore invertible in $U(N)((t_w;\delta_w))((t_v;\delta_v))((t_u;\delta_u))$.

Clearly $(w+v^2)$ and $(w-v^2)$ are series of the form $\pm t_v^{-2}(1+g_1)$
where $g_1$ is a series on positive powers of $t_v$ and coefficients in
$U(N)((t_w;\delta_w))$. Thus $(w+v^2)^{-1}$ and $(w-v^2)^{-1}$ are
series of the form  $\pm t_v^{2}(1+g_2)$ 
where $g_2$ is a series on positive powers of $t_v$ and coefficients in
$U(N)((t_w;\delta_w))$. Using that $S_1$ is a series of the form $2t_w^2+h_3$, where $h_3$
is as stated above, we obtain that  $T_1$ is a series  of the form
$2t_w^2+g_3+h_8$ where  $g_3$ is a series on positive powers of $t_v$ and coefficients in
$U(N)((t_w;\delta_w))$ and $h_8$ is a series on positive powers of $t_u$ with
coefficients in $U(N)((t_w;\delta_w))((t_v;\delta_v))$.

By \eqref{eq:coefseries3}, \eqref{eq:coefseries4}, 
$(w+v^2)^*$ and $(w-v^2)^*$ are series of the form $\pm t_v^{-2}(1+g_4)$ 
where $g_4$ is a series on positive powers of $t_v$ and coefficients in
$U(N)((t_w;\delta_w))$. Thus $((w+v^2)^*)^{-1}$ and $((w-v^2)^*)^{-1}$ are
series of the form  $\pm t_v^{2}(1+g_5)$ 
where $g_5$ is a series on positive powers of $t_v$ and coefficients in
$U(N)((t_w;\delta_w))$. Using that $S_1^*$ is a series of the form $2t_w^2+h_6$, where $h_6$
is as stated above, we obtain that  $T_1^*$ is a series  of the form
$2t_w^2+g_6+h_9$ where  $g_6$ is a series on positive powers of $t_v$ and coefficients in
$U(N)((t_w;\delta_w))$ and $h_9$ is a series on positive powers of $t_u$ with
coefficients in $U(N)((t_w;\delta_w))((t_v;\delta_v))$. Therefore
$1+T_1T_1^*$ is a series of the form $1+4t_w^4+g_7+h_{10}$ where 
$g_7$ is a series on positive powers of $t_v$ and coefficients in
$U(N)((t_w;\delta_w))$ and $h_{10}$ is a series on positive powers of $t_u$ with
coefficients in $U(N)((t_w;\delta_w))((t_v;\delta_v))$. Now 
$1+T_1T_1^*$ is invertible because the series $1+4t_w^4+g_7$ is invertible
in $U(N)((t_w;\delta_w))((t_v;\delta_v))$ since $1+4t_w^4$ is invertible in
  $U(N)((t_w;\delta_w))$.

\bigskip

\noindent(3) Suppose that the induced involution on $L/N$ is the one on 
Lemma~\ref{lem:equivalentinvolutionHeisenbergalgebra}(3).

The result follows very much like (2) from the following claim which can be shown as Claim~2.

\underline{Claim~3:} The elements 
$$(V+\frac{1}{3}w)^*,\ (V-\frac{1}{3}w)^*,\ (w^2+v^3)^*,\ (w^3-v^3)^*$$
belong to and are invertible in $U(N)((t_w;\delta_w))((t_v;\delta_v))((t_u;\delta_u))$.

\begin{align*}
(w^2+v^3)^*&= (-w+n_w)^2+(-v+n_v)^3 \\
& =  w^2 - wn_w-n_ww +n_w^2- v^3+v^2n_v+vn_vv +n_vv^2\\ 
& \qquad \quad -vn_v^2-n_v^2v-n_vvn_v+n_v^3\\
& =  -v^3+3v^2n_v-v(3n_v^2+[n_v,v])-[n_v^2,v]+[n_v,v^2]\\ 
& \qquad \quad -[n_v,v]n_v+n_v^3+w^2-2wn_w+[n_w,w]+n_w^2\\
& =  -t_v^{-3}+3t_v^{-2}n_v- t_v^{-1}(3n_v^2+[n_v,v])+t_w^{-2}-2t_w^{-1} +f_5
\end{align*}

\begin{align*}
(w^2-v^3)^*&= (-w+n_w)^2-(-v+n_v)^3 \\
& =   w^2 - wn_w-n_ww +n_w^2-(- v^3+v^2n_v+vn_vv +n_vv^2-vn_v^2\\
& \qquad\qquad\qquad\qquad -n_v^2v-n_vvn_v+n_v^3 )\\
& =  t_v^{-3}-3t_v^{-2}n_v+ t_v^{-1}(3n_v^2+[n_v,v])+t_w^{-2}-2t_w^{-1} +f_6
\end{align*} 

The elements $(w^2+v^3)^*,(w^2-v^3)^*$ are invertible because the coefficient of $t_v^{-3}$
is $\pm1$, which is clearly invertible.
\end{proof}

\begin{coro}\label{coro:freesymmetricresiduallynilpotent}
Let $k$ be a field of  characteristic zero and $K$ be a 
residually nilpotent Lie
$k$-algebra. 
Let $u,v\in K$ be such that $[v,u]\neq 0$ and denote by $L$ the Lie
$k$-subalgebra of $K$ generated by $\{u,v\}$. 

Let $w=[v,u]$, $V=\frac{1}{2}(uv+vu)$, and consider the following
elements of $\mathfrak{D}(L)$:
$$S=(V-\frac{1}{3}w)(V+\frac{1}{3}w)^{-1},\qquad
T=(w+v^2)^{-1}(w-v^2)S(w+v^2)(w-v^2)^{-1},$$ 
$$S_1=w^{-1}\Big((V-\frac{1}{3}w)(V+\frac{1}{3}w)^{-1}+ (V-\frac{1}{3}w)^{-1}(V+\frac{1}{3}w)\Big) w^{-1},$$
$$T_1=(w+v^2)^{-1}(w-v^2)S_1(w+v^2)(w-v^2)^{-1},$$
 $$T_2=(w^2+v^3)^{-1}(w^2-v^3)S_1(w^2+v^3)(w^2-v^3)^{-1}.$$
Then the following hold true.

\begin{enumerate}[\rm(1)]
	\item The Lie $k$-algebra $L/[[L,L],L]$ is isomorphic to $H$, the Heisenberg Lie
	$k$-algebra.
	\item The $k$-subalgebra of $\mathfrak{D}(L)$ generated by $\{S,\, S^{-1},\, T,\, T^{-1}\}$ is the free group
$k$-algebra on the set $\{S,T\}$.
	\item Suppose that $L$ is invariant under $*$ and that the induced involution on
	$L/[[L,L],L]$ is one of the involution on Lemma~\ref{lem:equivalentinvolutionHeisenbergalgebra}.
	\begin{enumerate}[\rm(i)]
\item If 	the induced involution on $L/[[L,L],L]$
is one of the involutions in Lemma~\ref{lem:equivalentinvolutionHeisenbergalgebra}~(2) and (3).
Then the following hold true.
\begin{enumerate}[\rm(a)]
\item The elements $S_1S_1^*$ and $T_1T_1^*$ are symmetric.
\item The $k$-subalgebra of $\mathfrak{D}(L)$ 
generated by $\{S_1S_1^*, T_1T_1^*\}$ is the free 
 $k$-algebra on $\{S_1S_1^*, T_1T_1^*\}$.
\item  The $k$-subalgebra of $\mathfrak{D}(L)$ generated by 
$$\{1+S_1S_1^*, (1+S_1S_1^*)^{-1}, 1+T_1T_1^*,(1+T_1T_1^*)^{-1}\}$$
 is the free group $k$-algebra on the set $\{1+S_1S_1^*, 1+T_1T_1^*\}$.
\end{enumerate}

\item If the induced involution on $L/[[L,L],L]$
is one of the involutions in Lemma~\ref{lem:equivalentinvolutionHeisenbergalgebra}~(1).
Then the following hold true.
\begin{enumerate}[\rm(a)]
\item The elements $S_1S_1^*$ and $T_2T_2^*$ are symmetric.
\item The $k$-subalgebra of $\mathfrak{D}(L)$ 
generated by $\{S_1S_1^*, T_2T_2^*\}$ is the free 
 $k$-algebra on $\{S_1S_1^*, T_2T_2^*\}$.
\item  The $k$-subalgebra of $\mathfrak{D}(L)$ generated by 
$$\{1+S_1S_1^*, (1+S_1S_1^*)^{-1}, 1+T_2T_2^*,(1+T_2T_2^*)^{-1}\}$$
 is the free group $k$-algebra on the set $\{1+S_1S_1^*, 1+T_2T_2^*\}$.
	\end{enumerate}
	
\end{enumerate} 
\end{enumerate}

\end{coro}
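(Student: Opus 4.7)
The plan is to reduce the entire corollary to Theorem~\ref{theo:freesymmetricresiduallynilpotent} once part~(1) is established. The key observation is that part~(1) produces precisely the Lie $k$-algebra homomorphism $L \to H$ sending $u \mapsto x$, $v \mapsto y$ that is the hypothesis of that theorem, with kernel $N = [[L,L],L]$.

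For part~(1), let $L^{(1)} = L$ and $L^{(n+1)} = [L, L^{(n)}]$. Since $L \subseteq K$ and $K$ is residually nilpotent, so is $L$, i.e.\ $\bigcap_n L^{(n)} = 0$. The quotient $\bar L = L/L^{(3)}$ is generated by $\bar u, \bar v$ and is nilpotent of class at most two, hence every iterated bracket of length greater than two in $\bar u, \bar v$ vanishes, and the commutator $[\bar L, \bar L]$ is central. A direct bilinear expansion then shows $[\bar L, \bar L]$ is spanned over $k$ by $[\bar v, \bar u]$, so $\dim_k [\bar L, \bar L] \leq 1$. Assume for contradiction that $[\bar v, \bar u] = 0$, i.e.\ $[v,u] \in L^{(3)}$; then $[L,L] \subseteq L^{(3)}$, so $L^{(2)} = L^{(3)}$, and iterating gives $L^{(n)} = L^{(2)}$ for all $n \geq 2$. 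Residual nilpotency then forces $L^{(2)} = 0$ and in particular $[v,u] = 0$, contradicting the hypothesis. Therefore $\bar L$ is three-dimensional, $[\bar L, \bar L]$ has dimension one and coincides with the center of $\bar L$, so by the characterization of the Heisenberg algebra recalled after \eqref{eq:Heisenberg}, $\bar L \cong H$ via $\bar u \mapsto x$, $\bar v \mapsto y$.

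With (1) in hand, the canonical surjection $L \to L/L^{(3)} \cong H$ satisfies exactly the hypothesis of Theorem~\ref{theo:freesymmetricresiduallynilpotent}, and part~(2) of the corollary is then a verbatim restatement of part~(1) of that theorem. For part~(3), observe that $[[L,L],L]$ is a characteristic ideal of $L$, being defined by iterated Lie brackets and therefore preserved by every Lie $k$-algebra endomorphism of $L$; in particular, $N = [[L,L],L]$ is $*$-invariant whenever $L$ is. Consequently the hypotheses of Theorem~\ref{theo:freesymmetricresiduallynilpotent}(2) or (3) are met, and the desired conclusions (i) or (ii) follow, according to whether the induced involution on $H$ is of type (2)/(3) or of type (1) in Lemma~\ref{lem:equivalentinvolutionHeisenbergalgebra}.

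The only genuine calculation is the short contradiction in part~(1) verifying $[v,u] \notin [[L,L],L]$, which is the single place residual nilpotency is invoked; everything else is bookkeeping followed by a direct appeal to Theorem~\ref{theo:freesymmetricresiduallynilpotent} with $N = [[L,L],L]$.
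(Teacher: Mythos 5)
Your proposal is correct and follows essentially the same route as the paper: you establish that $L/[[L,L],L]$ is the Heisenberg algebra (your contradiction argument is just the expansion of the paper's one-line claim that residual nilpotency forces $[v,u]\in[L,L]\setminus[[L,L],L]$) and then invoke Theorem~\ref{theo:freesymmetricresiduallynilpotent} with $N=[[L,L],L]$, whose $*$-invariance you rightly note (strictly, $*$ is an anti-automorphism rather than an endomorphism, but the ideal $[[L,L],L]$ is still preserved since $[[a,b],c]^*=[c^*,[b^*,a^*]]\in[L,[L,L]]=[[L,L],L]$). The only difference is cosmetic: the paper's proof additionally records the embedding $\mathfrak{D}(L)\hookrightarrow\mathfrak{D}(K)$, which is not needed for the statement as phrased in terms of $\mathfrak{D}(L)$.
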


\begin{proof}
Define $N=[[L,L],L]$. 
Since $L$ is residually nilpotent and not abelian, $[v,u]\in [L,L]\setminus N$.
Thus $L/N$ is not abelian.
Moreover
$L/N$ is a noncommutative 3-dimensional Lie $k$-algebra with basis
$\{\bar{u},\bar{v},\bar{w}\}$,  the classes of $u$, $v$ and $w$ in
$L/N$. Moreover $[L/N,L/N]=k\bar{w}$ which is contained in the
center of $L/N$. Therefore $L/N$ is the Heisenberg Lie $k$-algebra.

By Theorem~\ref{theo:freesymmetricresiduallynilpotent}, the result
holds for $\mathfrak{D}(L)$. Since
$\mathfrak{D}(L)\hookrightarrow\mathfrak{D}(K)$, the result
follows.
\end{proof}

\begin{coro}\label{coro:symmetricresiduallynilpotent}
Let $k$ be a field of characteristic zero and $L$ be a nonabelian residually nilpotent
Lie $k$-algebra endowed with
an involution $*\colon L\rightarrow L$. 
Then there exist symmetric elements $A,B\in \mathfrak{D}(L)$ such that
the $k$-subalgebra of $\mathfrak{D}(L)$ generated by $\{A,A^{-1},B,B^{-1}\}$ is the free
group $k$-algebra on $\{A,B\}$.
\end{coro}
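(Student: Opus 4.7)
The plan is to reduce to Corollary~\ref{coro:freesymmetricresiduallynilpotent}(3) by constructing a suitable $*$-invariant two-generator subalgebra $L_0 \subseteq L$ whose ``Heisenberg quotient'' inherits one of the three standard involutions of Lemma~\ref{lem:equivalentinvolutionHeisenbergalgebra}. First I would pass to a manageable quotient. Let $L^{(n)}$ denote the $n$-th term of the lower central series. Since $L$ is residually nilpotent, $\bigcap_{n} L^{(n)}=0$; since $L$ is nonabelian, $[L,L]\neq 0$, and an easy induction shows that $[L,L]\subseteq L^{(3)}$ would force $[L,L]\subseteq \bigcap_n L^{(n)}=0$. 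Hence $L/L^{(3)}$ is nonabelian nilpotent of class $\leq 2$, and $N:=L^{(3)}$ is characteristic, in particular $*$-invariant, so $*$ descends to an involution on $L/N$.

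Next I would apply Proposition~\ref{prop:involutionnilpotent} to the nonabelian nilpotent $*$-invariant quotient $L/N$, obtaining classes $\bar u,\bar v\in L/N$ satisfying the Heisenberg relations with $[\bar v,\bar u]\neq 0$ and with $\bar u^*=\varepsilon_u\bar u$, $\bar v^*=\varepsilon_v\bar v$ for some signs $\varepsilon_u,\varepsilon_v\in\{\pm 1\}$. Pick any lifts $u',v'\in L$ and then symmetrize by setting
\[
u=\tfrac{1}{2}(u'+\varepsilon_u u'^{*}),\qquad v=\tfrac{1}{2}(v'+\varepsilon_v v'^{*}).
\]
A direct check yields $u^{*}=\varepsilon_u u$ and $v^{*}=\varepsilon_v v$, and since $u'^{*}+N=\varepsilon_u\bar u$, the classes $u+N$ and $v+N$ are still $\bar u$ and $\bar v$; in particular $[v,u]\neq 0$ in $L$.

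Let $L_0$ denote the Lie $k$-subalgebra of $L$ generated by $\{u,v\}$. Since $u,v$ are $*$-eigenvectors, $L_0$ is $*$-invariant, and as a subalgebra of $L$ it is residually nilpotent. By Corollary~\ref{coro:freesymmetricresiduallynilpotent}(1), $L_0/[[L_0,L_0],L_0]$ is the Heisenberg Lie $k$-algebra $H$, with generators $\tilde u,\tilde v$ inheriting $\tilde u^{*}=\varepsilon_u\tilde u$, $\tilde v^{*}=\varepsilon_v\tilde v$. Going through the four sign possibilities and computing $[\tilde v,\tilde u]^{*}=[\tilde u^{*},\tilde v^{*}]$, the pair $(+,-)$ lands in case (1) of Lemma~\ref{lem:equivalentinvolutionHeisenbergalgebra}; the pair $(-,+)$ lands in case (1) after swapping the roles of $u$ and $v$; the pair $(+,+)$ lands in case (2); and the pair $(-,-)$ lands in case (3). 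Thus Corollary~\ref{coro:freesymmetricresiduallynilpotent}(3) applies to $L_0$ and produces symmetric $A,B\in\mathfrak{D}(L_0)$ whose $k$-subalgebra together with $A^{-1},B^{-1}$ is the free group $k$-algebra on $\{A,B\}$. Via the natural $*$-equivariant embedding $\mathfrak{D}(L_0)\hookrightarrow\mathfrak{D}(L)$ (which exists and is unique on the involutions by the results recalled at the start of Section~\ref{sec:residuallynilpotent}), these $A,B$ provide the desired symmetric elements of $\mathfrak{D}(L)$.

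The main obstacle I anticipate is the matching step: the conclusion of Proposition~\ref{prop:involutionnilpotent} only guarantees some $*$-invariant Heisenberg subalgebra in $L/N$ with one of the three standard restricted involutions, and one needs to transfer this back to $L$ without losing either the Heisenberg structure modulo $[[L_0,L_0],L_0]$ or the exact normal form of the involution required by Corollary~\ref{coro:freesymmetricresiduallynilpotent}(3). The choice of $*$-eigenvector lifts combined with the allowable swap $u\leftrightarrow v$ resolves this cleanly; the remaining verifications are routine once the lift has been made.
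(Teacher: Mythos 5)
Your argument is correct, and it follows the paper's overall strategy: pass to the nonabelian nilpotent quotient $L/\gamma_3(L)$ (your residual-nilpotence argument for nonabelianity is exactly what is needed), invoke Proposition~\ref{prop:involutionnilpotent} to find a $*$-invariant Heisenberg subalgebra there with the involution in one of the normal forms of Lemma~\ref{lem:equivalentinvolutionHeisenbergalgebra}, transfer this back to a $*$-invariant subalgebra of $L$, apply the residually nilpotent machinery of Section~\ref{sec:residuallynilpotent}, and finish via the embedding of the corresponding division subring into $\mathfrak{D}(L)$. Where you genuinely diverge is the transfer step. The paper sets $N=[L,[L,L]]$, takes $M$ to be the subalgebra generated by $N\cup\{a,b\}$, so that $M/N$ \emph{is} the Heisenberg subalgebra produced by Proposition~\ref{prop:involutionnilpotent} with its involution already in normal form, and applies Theorem~\ref{theo:freesymmetricresiduallynilpotent}(2),(3) directly to $M$. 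You instead adjust the lifts by the eigenvector averages $u=\tfrac{1}{2}(u'+\varepsilon_u u'^{*})$, $v=\tfrac{1}{2}(v'+\varepsilon_v v'^{*})$ (legitimate since $\operatorname{char}k=0$ and $\gamma_3(L)$ is invariant under $*$, as $x\mapsto -x^{*}$ is an automorphism), take the two-generated subalgebra $L_0$, and apply Corollary~\ref{coro:freesymmetricresiduallynilpotent}(3) to $L_0/[[L_0,L_0],L_0]$, with the sign analysis (and the harmless relabelling $u\leftrightarrow v$ in the $(-,+)$ case) guaranteeing that the induced involution is exactly one of the three normal forms, as that corollary requires. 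Your variant buys a genuinely two-generated subalgebra — exactly the setting the corollary is tailored to — at the cost of the symmetrization and sign bookkeeping; the paper's variant avoids any adjustment of lifts by carrying $N$ inside $M$ and quoting the more general Theorem~\ref{theo:freesymmetricresiduallynilpotent}. Your final point about $A,B$ remaining symmetric in $\mathfrak{D}(L)$, via uniqueness of the extension of the involution to $\mathfrak{D}(L_0)$, is the right justification and corresponds to the paper's implicit use of $\mathfrak{D}(M)\subseteq\mathfrak{D}(L)$.
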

\begin{proof}
Let $N$ be the $*$-invariant ideal $[L,[L,L]]$.  
The Lie $k$-algebra $L/N$ is nilpotent but not abelian and $*$ induces an involution
on $L/N$. By Proposition~\ref{prop:involutionnilpotent}, there exists an invariant
Heisenberg Lie $k$-subalgebra $H$ of $L/N$ such that the restriction of the involution is
one of involutions of Lemma~\ref{lem:equivalentinvolutionHeisenbergalgebra}. Let
$a+N$, $b+N$ be the generators of $H$. Let $M$ be the Lie $k$-subalgebra of $L$ generated
by $N\cup\{a,b\}$. Then $*$ induces an involution $*\colon M\rightarrow M$ by restriction,
$M/N\cong H$ and the induced involution on $M/N$ is one of the involutions of Lemma~\ref{lem:equivalentinvolutionHeisenbergalgebra}. Apply Theorem~\ref{theo:freesymmetricresiduallynilpotent}(2) and (3)
to obtain that $\mathfrak{D}(M)$ satisfies the 
desiered result. Now observe that $\mathfrak{D}(M)\subseteq \mathfrak{D}(L)$.
\end{proof}

\section{Free group algebras in the Ore ring of fractions of universal enveloping
algebras which are Ore domains}\label{sec:Ore}

The main results in this section are 
Theorems~\ref{theo:freegroupOre},\ref{theo:symmetricOre1},\ref{theo:symmetricOre2}. They
all have a similar but technical proof. Thanks to
the results in Section~\ref{sec:freegroupalgebrasdivision}, 
the method can be seen as an improvement of the technique
originally used in the proof of \cite[Theorem~2]{Lichtmanfreeuniversalenveloping} and that
was also used to show \cite[Theorem~5.2]{FerreiraGoncalvesSanchez2}.

\subsection{On conjecture (GA)}

\begin{theo}\label{theo:freegroupOre}
Let $k$ be a field of  characteristic zero and $L$ be a Lie
$k$-algebra whose universal enveloping algebra $U(L)$ is an
Ore domain. Let $u,v\in L$ such that the Lie subalgebra generated by
them is of dimension at least three.

Define $w=[v,u]$, $V=\frac{1}{2}(uv+vu)$, and consider the
following elements of $\mathfrak{D}(L)$ the Ore ring of
fractions of $U(L)$:
$$S=(V-\frac{1}{3}w)(V+\frac{1}{3}w)^{-1},\qquad T=(w+v^2)^{-1}(w-v^2)S(w+v^2)(w-v^2)^{-1}.$$ Then the $k$-subalgebra
of $\mathfrak{D}(L)$ generated by $\{S,\, S^{-1},\, T,\, T^{-1}\}$ is the free group
$k$-algebra on $\{S,T\}$.
\end{theo}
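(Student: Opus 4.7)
My strategy is to transfer the conclusion from the residually nilpotent setting (Corollary~\ref{coro:freesymmetricresiduallynilpotent}) to the present Ore setting by means of the filtered-to-graded lifting tool Proposition~\ref{prop:freeobjecthomogeneous}. First I would endow $L$ with the descending $\integers$-filtration of Example~\ref{ex:usualfiltrationLiealgebra}: declare $F_{-1}L = ku + kv$ and take iterated brackets for the deeper pieces (extending trivially outside $\langle u,v\rangle$ if needed). By Lemma~\ref{lem:filtrationuniversalenveloping}, this induces a separating valuation $\upsilon\colon U(L)\to \integers\cup\{\infty\}$ together with an isomorphism $\gr_\upsilon U(L) \cong U(\gr(L))$, and $U(\gr(L))$ is again Ore. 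Since $U(L)$ is Ore, Lemma~\ref{lem:gradedOre}(1),(3) extends $\upsilon$ to $\mathfrak{D}(L)$ and identifies $\gr_\upsilon \mathfrak{D}(L)$ with the homogeneous Ore localization $\mathcal{H}^{-1}U(\gr(L))$, which sits inside $\mathfrak{D}(\gr(L))$.

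Next I would observe that $\gr(L)$ is residually nilpotent, since it is concentrated in degrees $\leq -1$: the $n$-th term of its lower central series lies in $\bigoplus_{m\leq -n} L_m$, whose intersection is zero by separation. The hypothesis that $\langle u,v\rangle\subseteq L$ has dimension at least three is equivalent to $w=[v,u] \notin ku+kv = F_{-1}L$, so $\bar w = [\bar v,\bar u]$ is nonzero in $L_{-2}$ and the Lie subalgebra of $\gr(L)$ generated by $\{\bar u,\bar v\}$ is nonabelian. Corollary~\ref{coro:freesymmetricresiduallynilpotent}(2), applied inside $\gr(L)$, then gives that the analogous elements
\[
\bar S = \bigl(\bar V - \tfrac{1}{3}\bar w\bigr)\bigl(\bar V + \tfrac{1}{3}\bar w\bigr)^{-1}, \quad \bar T = (\bar w + \bar v^2)^{-1}(\bar w - \bar v^2)\,\bar S\,(\bar w + \bar v^2)(\bar w - \bar v^2)^{-1}
\]
generate a free group $k$-algebra inside $\mathfrak{D}(\gr(L))$, and hence inside $\mathcal{H}^{-1}U(\gr(L)) \cong \gr_\upsilon\mathfrak{D}(L)$.

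The step I expect to be the main technical point is a bookkeeping computation matching the leading symbols of the factors defining $S,T$ in $\mathfrak{D}(L)$ with the upstairs elements just named. Using $\bar v \bar u = \bar u\bar v + \bar w$, PBW gives $\bar V = \bar u\bar v + \tfrac{1}{2}\bar w$, so the leading symbols of $V\pm \tfrac{1}{3}w$ in $U(\gr(L))_{-2}$ are $\bar u\bar v + \tfrac{1}{6}\bar w$ and $\bar u\bar v + \tfrac{5}{6}\bar w$, both nonzero standard polynomials; likewise $w\pm v^2$ have nonzero leading symbols $\bar w\pm \bar v^2$. Hence $\upsilon(V\pm\tfrac{1}{3}w) = \upsilon(w\pm v^2) = -2$ and $\upsilon(S) = \upsilon(T) = 0$, and via the isomorphism of Lemma~\ref{lem:gradedOre}(3) the images of $S$ and $T$ in $\gr_\upsilon\mathfrak{D}(L)$ are precisely $\bar S$ and $\bar T$. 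All hypotheses of Proposition~\ref{prop:freeobjecthomogeneous} with $Z = k$ (so $Z_0 = k$, as $\upsilon$ is trivial on $k$) are then in place: $S, T$ are invertible in $\mathfrak{D}(L)$, the map $\{S,T\}\to \gr_\upsilon\mathfrak{D}(L)$ is injective, and its image generates the free group $k$-algebra on $\{\bar S,\bar T\}$. The conclusion of Proposition~\ref{prop:freeobjecthomogeneous} is exactly that the $k$-subalgebra of $\mathfrak{D}(L)$ generated by $\{S,S^{-1},T,T^{-1}\}$ is the free group $k$-algebra on $\{S,T\}$, which completes the proof.
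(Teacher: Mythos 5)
Your proposal is correct and takes essentially the same route as the paper's proof: the filtration of Example~\ref{ex:usualfiltrationLiealgebra}, the identifications from Lemmas~\ref{lem:filtrationuniversalenveloping} and~\ref{lem:gradedOre}, the observation that $\gr_{F_\mathbb{Z}}(L)$ is nonabelian residually nilpotent so that Corollary~\ref{coro:freesymmetricresiduallynilpotent}(2) applies to $\overline{S},\overline{T}$, and the final lift via Proposition~\ref{prop:freeobjecthomogeneous}, with the same degree bookkeeping $\upsilon(V\pm\tfrac{1}{3}w)=\upsilon(w\pm v^2)=-2$ and $\upsilon(S)=\upsilon(T)=0$. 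The only cosmetic difference is that the paper handles the case $L\neq\langle u,v\rangle$ by first replacing $L$ with the Lie subalgebra generated by $u,v$ (using $\mathfrak{D}(L_1)\subseteq\mathfrak{D}(L)$), which is the clean version of your parenthetical remark about extending the filtration.
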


\begin{proof}
Let $L_1$ be the Lie $k$-subalgebra of $L$ generated by $u$ and $v$.
Since $U(L)$ is an Ore domain, $U(L_1)$ is also an Ore domain and
$\mathfrak{D}(L_1)\subseteq \mathfrak{D}(L)$. Thus, we may
suppose that $L$ is generated by $u$ and $v$.

Consider the filtration $F_{\mathbb{Z}}L=\{F_nL\}_{n\in\mathbb{Z}}$ of $L$
given in
Example~\ref{ex:usualfiltrationLiealgebra}. It induces a filtration
$F_\mathbb{Z}U(L)=\{F_nU(L)\}_{n\in\mathbb{Z}}$  on $U(L)$ as shown
in Section~\ref{sec:filtrationuniversal}. Moreover, by
Lemma~\ref{lem:filtrationuniversalenveloping}(1), there exists
an isomorphism of $\mathbb{Z}$-graded $k$-algebras
\begin{equation}\label{eq:isomorphismofgraded}
U(\gr_{F_\mathbb{Z}}(L)) \cong  \gr_{F_\mathbb{Z}}(U(L)),
\end{equation}
which induces a valuation $\upsilon\colon U(L)\rightarrow
\mathbb{Z}\cup\{\infty\}$ as in Section~\ref{sec:generalfiltrations}.
 It can be
extended to a valuation $\upsilon\colon \mathfrak{D}(L)\rightarrow
\mathbb{Z}\cup\{\infty\}$ \cite[Proposition~9.1.1]{Cohnskew}. 
We recall that the
filtration it induces is $F_\mathbb{Z}\mathfrak{D}(L)=\{F_n\mathfrak{D}(L)\}_{n\in\mathbb{Z}}$
where $F_n\mathfrak{D}(L)=\{f\in \mathfrak{D}(L)\colon \upsilon(f)\geq n\}$.

In what follows,  the two
objects in \eqref{eq:isomorphismofgraded} will be identified.
Consider $u,v$ and $w=[v,u]$. Note that $\upsilon(u)=\upsilon(v)=-1$
and $\upsilon(w)=-2$ because $L$ is not two-dimensional.  Denote by
$\bar{u}$, $\bar{v}$ the class of $u,v\in U(L)_{-1}$ and also the
class of $u$ and $v$ in $L_{-1}$. Denote by $\bar{w}$ the class of
$w$ in $U(L)_{-2}$ and in $L_{-2}$.  By
Lemma~\ref{lem:gradedOre}(4), $U(\gr_{F_\mathbb{Z}}(L))$ is an Ore domain.
Let $\mathfrak{D}(\gr_{F_\mathbb{Z}}(L))$ be its Ore ring of fractions.

Now, $\gr_{F_\mathbb{Z}}(L)$ is a (negatively) graded Lie $k$-algebra
which is not abelian $(w\in L_{-2}\setminus L_{-1})$. Thus
$\gr_{F_\mathbb{Z}}(L)$ is a nonabelian residually nilpotent Lie $k$-algebra.
Observe that $[\bar{v},\bar{u}]=\bar{w}$ as elements of
$\gr_{F_\mathbb{Z}}(L)$.

Now define
$\overline{V}=\frac{1}{2}(\bar{u}\bar{v}+\bar{v}\bar{u})$,
$$\overline{S}=(\overline{V}-\frac{1}{3}\bar{w})(\overline{V}+\frac{1}{3}\bar{w})^{-1},\qquad
\overline{T}=(\bar{w}+\bar{v}^2)^{-1}(\bar{w}-\bar{v}^2)\overline{S}(\bar{w}+\bar{v}^2)(\bar{w}-\bar{v}^2)^{-1}.$$
Then Corollary~\ref{coro:freesymmetricresiduallynilpotent}(2) shows
that the $k$-subalgebra of $\mathfrak{D}(\gr_{F_\mathbb{Z}}(L))$ generated
by
$\{\overline{S},{\overline{S}^{\phantom{.}}}^{-1},\overline{T},{\overline{T}^{\phantom{.}}}^{-1}\}$
is the free group $k$-algebra on $\{\overline{S},\overline{T}\}$.
Let $\mathcal{H}$ be the set of homogeneous elements of $\gr_{F_\mathbb{Z}}(U(L))$. From
\eqref{eq:isomorphismofgraded}, and Lemma~\ref{lem:gradedOre} we
obtain the following commutative diagram 
$$\xymatrix{\gr_{F_\mathbb{Z}}(U(L))\cong U(\gr_{F_\mathbb{Z}}(L))\ar@{^{(}->}[r]\ar@{^{(}->}[d]
 &
\mathfrak{D}(\gr_{F_\mathbb{Z}}(L))\\
\mathcal{H}^{-1}\gr_{F_\mathbb{Z}}(U(L))\cong\gr_{F_\mathbb{Z}}(\mathfrak{D}(L))
\ar@{^{(}->}[ur] & },$$ where the diagonal arrow is obtained from the
universal property of the Ore localization. Note that
$\overline{V},\, \overline{V}-\frac{1}{3}\bar{w},\,
\overline{V}+\frac{1}{3}\bar{w},\, \bar{w}+\bar{v}^2,\,
\bar{w}-\bar{v}^2$ are homogeneous elements of degree $-2$ in
$\gr_{F_\mathbb{Z}}(U(L))$. Thus $\overline{S}$,
${\overline{S}^{\phantom{.}}}^{-1}$, $\overline{T}$,
${\overline{T}^{\phantom{.}}}^{-1}$ are in fact homogeneous elements
of degree zero in $\gr_{F_\mathbb{Z}}(\mathfrak{D}(L))$.

Now observe that $S$ and $T$ are elements of $\mathfrak{D}(L)$ such
that $\upsilon(S)=\upsilon(T)=0$ and
$\overline{S}=S+\mathfrak{D}(L)_{>0},\,
\overline{T}=T+\mathfrak{D}(L)_{>0}$ in
$\gr_{F_\mathbb{Z}}(\mathfrak{D}(L))$. By
Proposition~\ref{prop:freeobjecthomogeneous}, the $k$-subalgebra of
$\mathfrak{D}(L)$ generated by $\{S,\, S^{-1},\, T,\, T^{-1}\}$ is
the free group $k$-algebra on $\{S,T\}$.
\end{proof}

When the Lie subalgebra generated by $u$ and $v$ is of dimension
two, we cannot apply the methods developed thus far, but we have the
following consequence of Cauchon's Theorem.

\begin{prop}\label{prop:twodimensionalcase}
Let $k$ be a field of characteristic zero. Let $M$ be the
nonabelian two dimensional Lie $k$-algebra. Thus $M$ has a basis
$\{e,f\}$ such that $[e,f]=f$. Define
$s=(e-\frac{1}{3})(e+\frac{1}{3})^{-1}$ and $u=(1-f)(1+f)^{-1}$.
Consider the embedding $U(M)\hookrightarrow\mathfrak{D}(M)$. Then
 the $k$-algebra generated by the set
$\{S=s,\, S^{-1}, T=usu^{-1},\, T^{-1}\}$ is
the free group $k$-algebra on $\{S,\ T\}$.
\end{prop}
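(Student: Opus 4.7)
The plan is to realize $\mathfrak{D}(M)$ as a subfield of the Ore division ring $K(p;\sigma)$ used in Cauchon's theorem and then apply that theorem directly, with the specific choice $\alpha=\tfrac13$, $\beta=-\tfrac13$.

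First, I would set up the embedding. Let $K=k(t)$ and $\sigma\colon K\to K$ be the $k$-automorphism with $\sigma(t)=t-1$, so that in $K[p;\sigma]$ we have $tp-pt=tp-(t-1)p=p$, i.e.\ $[t,p]=p$. Since $M$ has presentation $\langle e,f\mid[e,f]=f\rangle$, there is a $k$-algebra homomorphism $\varphi\colon U(M)\to K[p;\sigma]$ with $\varphi(e)=t$, $\varphi(f)=p$. Using the PBW basis $\{e^if^j\}$ of $U(M)$ and the analogous normal form $\{t^ip^j\}$ in $K[p;\sigma]$, one checks that $\varphi$ is injective. Because $U(M)$ is an Ore domain and $K(p;\sigma)$ is a division ring containing $\varphi(U(M))$, the universal property of the Ore localization extends $\varphi$ to an embedding $\tilde\varphi\colon\mathfrak{D}(M)\hookrightarrow K(p;\sigma)$.

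Next, I would compute the images of the specific elements $s$ and $u$. Under $\tilde\varphi$,
\[
\tilde\varphi(s)=\bigl(t-\tfrac13\bigr)\bigl(t+\tfrac13\bigr)^{-1},\qquad
\tilde\varphi(u)=(1-p)(1+p)^{-1},
\]
which are exactly the elements appearing in Cauchon's theorem with $\alpha=\tfrac13$, $\beta=-\tfrac13$. The associated homography $h$ is just $h(z)=z-1$, and the orbits
\[
\mathcal{H}\cdot\tfrac13=\bigl\{\tfrac13-n:n\in\integers\bigr\},\qquad
\mathcal{H}\cdot(-\tfrac13)=\bigl\{-\tfrac13-n:n\in\integers\bigr\}
\]
are clearly both infinite, and they are disjoint because $\tfrac13-(-\tfrac13)=\tfrac23\notin\integers$. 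Since $\operatorname{char}k=0\neq 2$, Cauchon's theorem applies and tells us that the $k$-subalgebra of $K(p;\sigma)$ generated by $\tilde\varphi(S),\,\tilde\varphi(S)^{-1},\,\tilde\varphi(T),\,\tilde\varphi(T)^{-1}$ is the free group $k$-algebra on $\{\tilde\varphi(S),\tilde\varphi(T)\}$.

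Finally, because $\tilde\varphi$ is an embedding, pulling this back shows that the $k$-subalgebra of $\mathfrak{D}(M)$ generated by $\{S,S^{-1},T,T^{-1}\}$ is the free group $k$-algebra on $\{S,T\}$. No real obstacle arises: the only thing to verify carefully is the embedding $U(M)\hookrightarrow K[p;\sigma]$, which is immediate from PBW, and the orbit disjointness, which is a trivial parity-style computation. The role of the proposition is precisely to handle the degenerate two-dimensional case excluded by the hypothesis of Theorem~\ref{theo:freegroupOre}, and here Cauchon's theorem does all the work once the embedding is in place.
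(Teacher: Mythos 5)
Your proposal follows essentially the same route as the paper: both reduce the statement to Cauchon's theorem with $\alpha=\tfrac13$, $\beta=-\tfrac13$ after realizing $\mathfrak{D}(M)$ inside a division ring of the form $K(p;\sigma)$. The paper is a bit more direct: from $[e,f]=f$ it reads off $ef=f(e+1)$ and identifies $U(M)$ itself with the skew polynomial ring $k[e][f;\sigma]$, $\sigma(e)=e+1$, so that $\mathfrak{D}(M)=k(e)(f;\sigma)$ and Cauchon's theorem applies verbatim, with no auxiliary embedding to construct. One slip in your write-up: with the paper's convention $ap=p\sigma(a)$ for $K[p;\sigma]$, the choice $\sigma(t)=t-1$ gives $tp=p(t-1)$, hence $[t,p]=-p$, so the assignment $e\mapsto t$, $f\mapsto p$ does not respect $[e,f]=f$ (your computation $tp-pt=tp-(t-1)p$ implicitly uses the opposite convention $pt=\sigma(t)p$). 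Take $\sigma(t)=t+1$ instead (or send $e\mapsto -t$ and interchange $\alpha$ and $\beta$); with that correction the embedding, the orbit computation ($\tfrac13$ and $-\tfrac13$ lie in distinct infinite orbits under integer translation since $\tfrac23\notin\integers$), and the pull-back of Cauchon's conclusion along the embedding go through exactly as you describe.
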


\begin{proof}
Since $[e,f]=ef-fe=f$, $ef=f(e+1)$. Thus  $U(M)$ can be seen as a
skew polynomial $k$-algebra, $U(M)=k[e][f;\sigma]$, where
$\sigma(e)=e+1$.

According to Cauchon's Theorem, if we define
$s=(e-\frac{1}{3})(e+\frac{1}{3})^{-1}$ and $u=(1-f)(1+f)^{-1}$, the $k$-subalgebra
generated by  $\{s,\, s^{-1},\, usu^{-1},\, us^{-1}u^{-1}\}$ is the free group $k$-algebra
on $\{s,\, usu^{-1}\}$.
\end{proof}

Combining together Theorem~\ref{theo:freegroupOre} and
Proposition~\ref{prop:twodimensionalcase}, we obtain the following
result which is \cite[Theorem~4]{Lichtmanfreeuniversalenveloping}.

\begin{theo}\label{theo:freesymmetricOregeneral}
Let $k$ be a field of characteristic zero. Let $L$ be a
noncommutative Lie \mbox{$k$-algebra} such that $U(L)$ is an Ore
domain. Then there exist elements $S,T\in\mathfrak{D}(L)$ such that
the $k$-subalgebra of $\mathfrak{D}(L)$ generated by $\{S,\, S^{-1},\, T,\, T^{-1}\}$ is the
free group  $k$-algebra on $\{S,T\}$.
More precisely, let $u,v\in L$ such that $[v,u]\neq 0$. Then
\begin{enumerate}[\normalfont(1)]
\item if the Lie $k$-subalgebra of $L$ generated by $\{u,v\}$ is of dimension greater than two, then
one can choose $S$ and $T$ as defined in
Theorem~\ref{theo:freegroupOre},
\item if the Lie $k$-subalgebra of $L$ generated by $\{u,v\}$ is of dimension exactly two, then
one can choose $S$ and $T$ as defined in
Proposition~\ref{prop:twodimensionalcase}. \qed
\end{enumerate}
\end{theo}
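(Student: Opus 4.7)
The plan is to combine the two immediately preceding results via a case split on the dimension of the Lie $k$-subalgebra $L_1\subseteq L$ generated by the pair $\{u,v\}$. Since $L$ is noncommutative, we may pick $u,v\in L$ with $[v,u]\neq 0$, and then $\dim_k L_1\geq 2$ (a one-dimensional Lie algebra is abelian). As $U(L)$ is an Ore domain and $U(L_1)$ is a subalgebra of $U(L)$, one has that $U(L_1)$ is an Ore domain and the universal property of the Ore localization yields an embedding $\mathfrak{D}(L_1)\hookrightarrow \mathfrak{D}(L)$; it therefore suffices to exhibit the free group $k$-algebra inside $\mathfrak{D}(L_1)$.

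If $\dim_k L_1\geq 3$, the hypothesis of Theorem~\ref{theo:freegroupOre} is satisfied for the pair $(u,v)$ inside $L_1$. Applied to $L_1$ and transported along $\mathfrak{D}(L_1)\hookrightarrow \mathfrak{D}(L)$, that theorem produces the elements
\[
S=(V-\tfrac{1}{3}w)(V+\tfrac{1}{3}w)^{-1},\qquad T=(w+v^2)^{-1}(w-v^2)S(w+v^2)(w-v^2)^{-1},
\]
with $w=[v,u]$ and $V=\tfrac{1}{2}(uv+vu)$, which generate the required free group $k$-algebra; this proves part (1).

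If instead $\dim_k L_1=2$, then $L_1$ is a nonabelian two-dimensional Lie $k$-algebra. By the classification of such algebras, there exist $e,f\in L_1$ forming a $k$-basis with $[e,f]=f$: writing $[v,u]=\alpha u+\beta v$ with $(\alpha,\beta)\neq(0,0)$, one sets $f=\alpha u+\beta v$ and chooses $e\in L_1$ so that the bracket $[e,f]=f$ holds inside $L_1$. The assignment $e\mapsto e$, $f\mapsto f$ gives an isomorphism $L_1\cong M$ of Lie $k$-algebras, hence an isomorphism $\mathfrak{D}(L_1)\cong \mathfrak{D}(M)$ of division $k$-algebras. Proposition~\ref{prop:twodimensionalcase} furnishes, in $\mathfrak{D}(M)$, free group generators
\[
S=(e-\tfrac{1}{3})(e+\tfrac{1}{3})^{-1},\qquad T=usu^{-1}\text{ with }u=(1-f)(1+f)^{-1},
\]
and transporting them back through the embedding $\mathfrak{D}(L_1)\hookrightarrow\mathfrak{D}(L)$ yields part (2).

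No substantial obstacle is anticipated: the hard work sits in Theorem~\ref{theo:freegroupOre} (where the filtration-and-graded-Lie-algebra argument genuinely requires $[v,u]$ to be independent of $u,v$ modulo lower-degree pieces, which forces $\dim_k L_1\geq 3$) and in Cauchon's theorem (which underlies Proposition~\ref{prop:twodimensionalcase}). The only content here is the observation that these two disjoint cases exhaust all noncommutative possibilities, and the routine verification that the two-dimensional noncommutative Lie $k$-algebra is unique up to isomorphism so that Proposition~\ref{prop:twodimensionalcase} is indeed applicable.
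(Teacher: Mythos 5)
Your argument is correct and is exactly the route the paper takes: Theorem~\ref{theo:freesymmetricOregeneral} is obtained by combining Theorem~\ref{theo:freegroupOre} and Proposition~\ref{prop:twodimensionalcase} through the dichotomy on $\dim_k L_1$, using that $U(L_1)$ inherits the Ore property from $U(L)$ so that $\mathfrak{D}(L_1)\hookrightarrow\mathfrak{D}(L)$. Your added details (the embedding of division rings and the normalization $[e,f]=f$ for the two-dimensional nonabelian case) are the routine verifications the paper leaves implicit.
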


\subsection{On involutional versions of conjecture (GA)}

Now we turn our attention involutions and the existence of free group
algebras generated by symmetric elements.

\begin{theo}\label{theo:symmetricOre1}
Let $k$ be a field of characteristic zero and $L$ be a Lie $k$-algebra such that
$U(L)$ is an Ore domain. Let $*\colon L\rightarrow L$ be a $k$-involution.
Suppose that there exists an element $a\in L$ such that $[a^*,a]\neq 0$
and the Lie $k$-subalgebra generated by $\{a,a^*\}$ is of dimension at least $3$.

Define $u=a+a^*$, $v=a^*-a$, $w=[v,u]$ and $V=\frac{1}{2}(uv+vu)$, and consider the following
elements of $\mathfrak{D}(L)$:
$$S_1=w^{-1}\Big((V-\frac{1}{3}w)(V+\frac{1}{3}w)^{-1}+ (V-\frac{1}{3}w)^{-1}(V+\frac{1}{3}w)\Big) w^{-1},$$
 $$T_2=(w^2+v^3)^{-1}(w^2-v^3)S_1(w^2+v^3)(w^2-v^3)^{-1}.$$
Then the $k$-subalgebra of $\mathfrak{D}(L)$ generated by 
$$\{1+S_1S_1^*, (1+S_1S_1^*)^{-1}, 1+T_2T_2^*,(1+T_2T_2^*)^{-1}\}$$
 is the free group $k$-algebra on the set $\{1+S_1S_1^*, 1+T_2T_2^*\}$.
\end{theo}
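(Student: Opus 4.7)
The proof runs parallel to that of Theorem~\ref{theo:freegroupOre}, with two key adjustments dictated by the shape of $T_{2}$: the filtration on $L$ must assign $u$ and $v$ different weights, so that $w^{2}\pm v^{3}$ become homogeneous of the same degree in the associated graded; and the transfer from the graded ring back to $\mathfrak{D}(L)$ is made by Theorem~\ref{coro:divisionrings} (since $\upsilon(1+S_{1}S_{1}^{*})=0$ with trivial leading term, Proposition~\ref{prop:freeobjecthomogeneous} is not directly applicable). Replace $L$ by the Lie $k$-subalgebra $L_{1}$ generated by $\{u,v\}$; it coincides with the one generated by $\{a,a^{*}\}$ (using $a=(u-v)/2$, $a^{*}=(u+v)/2$), is $*$-invariant, has dimension at least three, and $U(L_{1})$ is Ore with $\mathfrak{D}(L_{1})\hookrightarrow\mathfrak{D}(L)$ respecting $*$. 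Observe that $u^{*}=u$, $v^{*}=-v$, and $w^{*}=[v,u]^{*}=[u,-v]=w$. Henceforth write $L$ for $L_{1}$, and equip it with the separating Lie filtration $F_{\mathbb{Z}}L$ in which $F_{n}L$ is the $k$-span of the iterated Lie brackets of $u,v$ of total weight $\geq n$, assigning weight $-1$ to $u$ and weight $-2$ to $v$. Because $F_{-1}L=ku$ and $F_{-2}L=ku+kv$ are $*$-stable, $F_{\mathbb{Z}}L$ is $*$-invariant. By Lemma~\ref{lem:filtrationuniversalenveloping}(1) it induces a valuation $\upsilon\colon U(L)\to\mathbb{Z}\cup\{\infty\}$ with $\gr_{\upsilon}U(L)\cong U(K)$, where $K=\gr_{F_{\mathbb{Z}}}L$ is graded with $\bar u\in K_{-1}$, $\bar v\in K_{-2}$, $\bar w\in K_{-3}$; by Lemma~\ref{lem:gradedOre}(1), $\upsilon$ extends to $\mathfrak{D}(L)$ and the extension of $*$ preserves it.

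The graded Lie algebra $K$ is negatively graded, hence residually nilpotent; since $\dim L\geq 3$ one has $w\notin F_{-2}L=ku+kv$, so $\bar w\neq 0$. The induced involution on $K$ satisfies $\bar u^{*}=\bar u$, $\bar v^{*}=-\bar v$, $\bar w^{*}=\bar w$ and descends on the Heisenberg quotient $K/[[K,K],K]$ to involution~(1) of Lemma~\ref{lem:equivalentinvolutionHeisenbergalgebra}. Corollary~\ref{coro:freesymmetricresiduallynilpotent}(3)(ii)(b), applied to $K$ with distinguished pair $\{\bar u,\bar v\}$, produces elements $\widetilde S_{1},\widetilde T_{2}\in\mathfrak{D}(K)$—defined by the same formulas as $S_{1},T_{2}$ with $\bar u,\bar v,\bar w$ in place of $u,v,w$—such that $\widetilde S_{1}\widetilde S_{1}^{*}$ and $\widetilde T_{2}\widetilde T_{2}^{*}$ freely generate a free $k$-subalgebra of $\mathfrak{D}(K)$. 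By Theorem~\ref{theo:freegroupHeisenberg}(3)(a), $\widetilde S_{1}^{*}=\widetilde S_{1}$ and $\widetilde T_{2}^{*}=\widetilde T_{2}$, so this is in fact the free $k$-algebra on $\{\widetilde S_{1}^{\,2},\widetilde T_{2}^{\,2}\}$.

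Finally, embed $\mathfrak{D}(K)\hookrightarrow\gr_{\upsilon}(\mathfrak{D}(L))$ through Lemma~\ref{lem:gradedOre}(3). A routine but careful inspection of leading terms shows that $V\pm\tfrac{1}{3}w$ has leading term $\bar u\bar v+(\tfrac12\pm\tfrac13)\bar w\in U(K)_{-3}$, and $w^{2}\pm v^{3}$ has leading term $\bar w^{2}\pm\bar v^{3}\in U(K)_{-6}$—this last equality being the critical point where the choice of weights is used—so that $\upsilon(S_{1})=\upsilon(T_{2})=6$ and the images of $S_{1},T_{2}$ in $\gr_{\upsilon}(\mathfrak{D}(L))_{6}$ coincide with $\widetilde S_{1},\widetilde T_{2}$. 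Because the graded involution sends the class of $f$ to the class of $f^{*}$, the images of $S_{1}S_{1}^{*}$ and $T_{2}T_{2}^{*}$ are $\widetilde S_{1}^{\,2}$ and $\widetilde T_{2}^{\,2}$, both homogeneous of valuation $12>0$ and freely generating a free $k$-subalgebra by the previous paragraph. Theorem~\ref{coro:divisionrings}, applied to $D=\mathfrak{D}(L)$ with $X=\{S_{1}S_{1}^{*},T_{2}T_{2}^{*}\}$ (the prime subring $\mathbb{Q}$ satisfies $\mathbb{Q}_{0}=\mathbb{Q}$ since $\upsilon|_{\mathbb{Q}}$ is trivial), now yields the theorem. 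The principal obstacle is the filtration choice: the weights must satisfy $2\,\mathrm{wt}(u)=\mathrm{wt}(v)$ in order for $w^{2}$ and $v^{3}$ to lie in the same graded component of $U(K)$, a non-standard choice compared with Example~\ref{ex:usualfiltrationLiealgebra} that underlies Theorem~\ref{theo:freegroupOre}.
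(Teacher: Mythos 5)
Your proposal is correct and follows essentially the same route as the paper: reduce to the $*$-invariant subalgebra generated by $u=a+a^{*}$, $v=a^{*}-a$, use the filtration with weights $-1,-2$ on $u,v$ (so $w$ has weight $-3$ and $w^{2}\pm v^{3}$ become homogeneous of degree $-6$), pass to the residually nilpotent graded Lie algebra whose Heisenberg quotient carries the involution of Lemma~\ref{lem:equivalentinvolutionHeisenbergalgebra}(1), invoke Corollary~\ref{coro:freesymmetricresiduallynilpotent}(3)(ii)(b), and transfer back via the leading-term computation ($\upsilon(S_{1}S_{1}^{*})=\upsilon(T_{2}T_{2}^{*})=12$) and Theorem~\ref{coro:divisionrings}. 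The only minor quibbles are cosmetic: the symmetry of $\widetilde S_{1},\widetilde T_{2}$ in $\mathfrak{D}(\gr(L))$ follows from redoing the computation of Theorem~\ref{theo:freegroupHeisenberg}(3)(a) there rather than from that statement verbatim (and is not actually needed, since the corollary already gives freeness of $\{\widetilde S_{1}\widetilde S_{1}^{*},\widetilde T_{2}\widetilde T_{2}^{*}\}$), and the prime subring in characteristic zero is $\mathbb{Z}$, not $\mathbb{Q}$.
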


\begin{proof}
Let $L_1$ be the Lie $k$-subalgebra of $L$ generated by $u$ and $v$.

Since $U(L)$ is an Ore domain, $U(L_1)$ is also an Ore domain.
Moreover, $\mathfrak{D}(L_1)\subseteq \mathfrak{D}(L)$. Thus, we may
suppose that $L$ is generated by $u$ and $v$.

Consider the filtration $F_{\mathbb{Z}}L=\{F_nL\}_{n\in\mathbb{Z}}$ of $L$
defined by 
$F_rL=0$ for all $r\geq 0$,  $F_{-1}L=ku$, $F_{-2}L=kv+F_{-1}L$, $F_{-3}L=k[v,u]+F_{-2}L$
and for $n\leq -3$,
$$F_{n-1}L=\sum_{n_1+n_2+\dotsb+n_r\geq (n-1)}[F_{n_1}L,[F_{n_2}L,\dotsc] \dotsb].$$
Observe that, for each $n\in\mathbb{Z}$, there exists   $\mathcal{B}_{n}\subseteq L$
whose classes give a basis of $L_{n}=F_nL/F_{n+1}L$ such that
$\bigcup\limits_{n\in\integers} \mathcal{B}_{n}$ is a basis of $L$.
This filtration on $L$ induces a filtration 
$F_\mathbb{Z}U(L)=\{F_nU(L)\}_{n\in\mathbb{Z}}$  on $U(L)$ as shown
in Section~\ref{sec:filtrationuniversal}. Moreover, by
Lemma~\ref{lem:filtrationuniversalenveloping}, there exists an isomorphism of $\mathbb{Z}$-graded
$k$-algebras
\begin{equation}\label{eq:isomorphismofgraded2}
U(\gr_{F_\mathbb{Z}}(L)) \cong  \gr_{F_\mathbb{Z}}(U(L)),
\end{equation}
which induces a valuation $\upsilon\colon U(L)\rightarrow
\mathbb{Z}\cup\{\infty\}$ as in Section~\ref{sec:generalfiltrations}.
In what follows,  the two
objects in \eqref{eq:isomorphismofgraded2} will be identified
via the isomorphism given in either \cite[Proposition~1]{Vergne} or 
\cite[Lemma~2.1.2]{Boiscorpsenveloppants}. This isomorphism sends
the class of an element of $\mathcal{B}_n$ in $L_n$ to
its class in $U(L)_n$.

Note that each $F_nL$ is invariant under $*$ because $u^*=u$ and $v^*=-v$.
Hence $*$ induces an involution on $\gr_{F_\mathbb{Z}}(L)$ and hence
on $U(\gr_{F_\mathbb{Z}}(L))$. Moreover each $F_nU(L)$ is invariant under $*$,
and thus $*$ also induces an involution on $\gr_{F_\mathbb{Z}}(U(L))$. Therefore
the isomorphism given in \eqref{eq:isomorphismofgraded2} is an isomorphism of
$k$-algebras with involution, that is, if $\Phi$ is the isomorphism of \eqref{eq:isomorphismofgraded2}
then $\Phi(f^*)=\Phi(f)^*$. 

Observe that 
$\gr_{F_\mathbb{Z}}(L)$ is a residually nilpotent Lie $k$-algebra.  Define 
$N=\bigoplus_{n\geq 4}L_n$. Then  
$\gr_{F_\mathbb{Z}}(L)/N$ is isomorphic to the Heisenberg Lie $k$-algebra $H$. Moreover $N$
is invariant under the involution $*$, and the induced involution in $\gr_{F_\mathbb{Z}}(L)/N$
is the one in Lemma~\ref{lem:equivalentinvolutionHeisenbergalgebra}(1).

The valuation  $\upsilon\colon U(L)\rightarrow
\mathbb{Z}\cup\{\infty\}$ can be
extended to a valuation $\upsilon\colon \mathfrak{D}(L)\rightarrow
\mathbb{Z}\cup\{\infty\}$ \cite[Proposition~9.1.1]{Cohnskew}.

Consider $u,v$ and $w=[v,u]$. Note that $\upsilon(u)=-1,\upsilon(v)=-2$
and $\upsilon(w)=-3$.  Denote by
$\bar{u}$, $\bar{v}$, $\bar{w}$ the class of $u\in U(L)_{-1}$, $v\in U(L)_{-2}$,
$w\in U(L)_{-3}$ and also the
class of $u$ in $L_{-1}$, $v\in L_{-2}$ and $w\in L_{-3}$, respectively.   By
Lemma~\ref{lem:gradedOre}(4), $U(\gr_{F_\mathbb{Z}}(L))$ is an Ore domain.
Let $\mathfrak{D}(\gr_{F_\mathbb{Z}}(L))$ be its Ore ring of fractions.
Observe that $[\bar{v},\bar{u}]=\bar{w}$ as elements of
$\gr_{F_\mathbb{Z}}(L)$.
Define
$\overline{V}=\frac{1}{2}(\bar{u}\bar{v}+\bar{v}\bar{u})$, and consider the following
elements of $\mathfrak{D}(\gr_{F_\mathbb{Z}}(L))$:
$$\overline{S}_1=\bar{w}^{-1}\Big((\overline{V}-\frac{1}{3}\bar{w})
(\overline{V}+\frac{1}{3}\bar{w})^{-1}+ (\overline{V}-\frac{1}{3}\bar{w})^{-1}(\overline{V}+\frac{1}{3}\bar{w})\Big) \bar{w}^{-1},$$
 $$\overline{T}_2=(\bar{w}^2+\bar{v}^3)^{-1}(\bar{w}^2-\bar{v}^3)\overline{S}_1(\bar{w}^2+\bar{v}^3)(\bar{w}^2-\bar{v}^3)^{-1}.$$

By Corollary~\ref{coro:freesymmetricresiduallynilpotent}(3)(ii)(b), 
the $k$-subalgebra of $\mathfrak{D}(\gr_{F_\mathbb{Z}}(L))$ 
generated by $\{\overline{S}_1\overline{S}_1^*, \overline{T}_2\overline{T}_2^*\}$ is the free 
 $k$-algebra on $\{\overline{S}_1\overline{S}_1^*, \overline{T}_2\overline{T}_2^*\}$.
Let $\mathcal{H}$ be the set of homogeneous elements of $\gr_{F_\mathbb{Z}}(U(L))$. From
\eqref{eq:isomorphismofgraded2} and Lemma~\ref{lem:gradedOre}, we
obtain the following commutative diagram
$$\xymatrix{\gr_{F_\mathbb{Z}}(U(L))\cong U(\gr_{F_\mathbb{Z}}(L))\ar@{^{(}->}[r]\ar@{^{(}->}[d]
 &
\mathfrak{D}(\gr_{F_\mathbb{Z}}(L))\\
\mathcal{H}^{-1}\gr_{F_\mathbb{Z}}(U(L))\cong\gr_{F_\mathbb{Z}}(\mathfrak{D}(L))
\ar@{^{(}->}[ur] & },$$ where the diagonal arrow is obtained from the
universal property of the Ore localization. Note that
$\overline{V},\, \overline{V}-\frac{1}{3}\bar{w},\,
\overline{V}+\frac{1}{3}\bar{w}$ are homogeneous elements of degree $-3$,
and the elements  $\bar{w}^2+\bar{v}^3,\,
\bar{w}^2-\bar{v}^3$ are homogeneous elements of degree $-3$ in
$\gr_{F_\mathbb{Z}}(U(L))$. Thus $\overline{S}_1$,
$\overline{S}_1^*$, $\overline{T}_2$,
$\overline{T}_2^*$ are in fact homogeneous elements
of degree $-6$ in $\gr_{F_\mathbb{Z}}(\mathfrak{D}(L))$.

Now observe that $S_1$, $S_1^*$, $T_2$ and $T_2^*$ are elements of $\mathfrak{D}(L)$ such
that $\upsilon(S_1)=\upsilon(S_1^*)=\upsilon(T_2)=\upsilon(T_2^*)=6$,
hence $\upsilon(S_1S_1^*)=12$, $\upsilon(T_2T_2^*)=12$  and
$\overline{S}_1\overline{S}_1^*=S_1S_1^*+\mathfrak{D}(L)_{>12},\,
\overline{T}_2\overline{T}_2^*=T_2T_2^*+\mathfrak{D}(L)_{>12}$ in
$\gr_{F_\mathbb{Z}}(\mathfrak{D}(L))$.

Now, by Theorem~\ref{coro:divisionrings}, the result follows.
\end{proof}

In case that $[x,x^*]=0$ for all $x\in L$ we are able to prove the following.

\begin{theo}\label{theo:symmetricOre2}
Let $k$ be a field of characteristic zero and $L$ be a Lie $k$-algebra such that
$U(L)$ is an Ore domain. Let $*\colon L\rightarrow L$ be a $k$-involution. 
Suppose that $[x,x^*]=0$ for all $x\in L$, but there exist elements
$x,y\in L$ such that $[y,x]\neq 0$ and the $k$-subspace of $L$ spanned by
$\{x,x^*,y,y^*\}$ is not the Lie $k$-subalgebra generated by $\{x,x^*,y,y^*\}$.
Then there exist symmetric elements $A,B\in\mathfrak{D}(L)$ such that the $k$-subalgebra
generated by $\{A,A^{-1},B,B^{-1}\}$ is the free group $k$-algebra on $\{A,B\}$. 
\end{theo}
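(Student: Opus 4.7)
The plan is to reduce the proof to an application of Corollary~\ref{coro:freesymmetricresiduallynilpotent}, following the same filtered/graded strategy used to establish Theorem~\ref{theo:symmetricOre1}. The only new step is producing two elements of $L$ of pure $*$-type that generate a Lie $k$-subalgebra of dimension at least three; once those are in hand, the construction of the symmetric free generators in $\mathfrak{D}(L)$ is essentially verbatim from Theorem~\ref{theo:symmetricOre1}.

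First I would polarize the identity $[a,a^*]=0$: applying it to $a=x+y$ yields $[x,y^*]+[y,x^*]=0$ for all $x,y\in L$, so $[x,y^*]=[x^*,y]$. Using the pair $x,y$ supplied by the hypothesis and the assumption that $\operatorname{char}k\neq 2$, I would introduce $p=x+x^*$, $q=y+y^*$, $p'=x-x^*$ and $q'=y-y^*$, so that $p,q$ are symmetric and $p',q'$ antisymmetric and together they span the same $k$-subspace as $\{x,x^*,y,y^*\}$. A direct calculation using $[x,y^*]=[x^*,y]$ gives $[p,p']=[q,q']=0$, shows that $[p,q]$ and $[p',q']$ are antisymmetric while $[p,q']=[p',q]$ is symmetric, and expresses each of these three brackets as an explicit $k$-linear combination of $[x,y]$, $[x,y^*]$ and $[x^*,y^*]$.

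Next I would argue by a short case analysis that at least one of the ordered pairs $(p,q)$, $(p',q')$, $(p,q')$ serves as a pair $(u,v)$ with $[v,u]\neq 0$ and $[v,u]\notin ku+kv$ (equivalently, with $\dim\langle u,v\rangle_{\mathrm{Lie}}\geq 3$). For $(p,q)$ the obstruction is automatically absent, since $[p,q]$ is antisymmetric while $kp+kq$ is a symmetric subspace, so $[v,u]\in ku+kv$ would force $[v,u]=0$. For the other two pairs, if the obstruction held in every case then combining the resulting linear relations for $[p,q]$, $[p,q']$ and $[p',q']$ with the identities above would express all of $[x,y]$, $[x^*,y^*]$ and $[x,y^*]$ as elements of $\operatorname{span}\{x,x^*,y,y^*\}$, forcing the Lie subalgebra generated by $\{x,x^*,y,y^*\}$ to coincide with its linear span, which contradicts the hypothesis. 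I expect this combinatorial step to be the main obstacle, but it is elementary.

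Once such $(u,v)$ is fixed, the rest follows the proof of Theorem~\ref{theo:symmetricOre1} with only cosmetic changes. When $u$ and $v$ have the same $*$-type (cases $(p,q)$ or $(p',q')$), I would equip $L$ with the filtration placing $u,v$ in degree $-1$ and $[v,u]$ in degree $-2$, so that the Heisenberg quotient of $\gr L$ inherits the gradation of Example~\ref{ex:gradedLie}(a) together with, respectively, the involution (2) or (3) of Lemma~\ref{lem:equivalentinvolutionHeisenbergalgebra}; Corollary~\ref{coro:freesymmetricresiduallynilpotent}(3)(i) then produces symmetric free generators built from the elements $S_1$ and $T_1$. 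When $u$ is symmetric and $v$ antisymmetric (case $(p,q')$), I would use the filtration of Theorem~\ref{theo:symmetricOre1} with $u$ in degree $-1$ and $v$ in degree $-2$; the Heisenberg quotient then carries the involution (1) of Lemma~\ref{lem:equivalentinvolutionHeisenbergalgebra}, and Corollary~\ref{coro:freesymmetricresiduallynilpotent}(3)(ii) delivers symmetric free generators built from $S_1$ and $T_2$. In each case the symmetric homogeneous elements $\overline{A},\overline{B}\in\mathfrak{D}(\gr L)$ obtained lift to symmetric elements $A,B\in\mathfrak{D}(L)$ that generate the desired free group $k$-algebra, via Proposition~\ref{prop:freeobjecthomogeneous} and Theorem~\ref{coro:divisionrings}, exactly as in the closing step of the proof of Theorem~\ref{theo:symmetricOre1}.
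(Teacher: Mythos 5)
Your proposal is correct, and it reaches the pair $(u,v)$ by a genuinely different route than the paper. The paper first filters $L$ by the subspace $V$ spanned by $\{x,x^*,y,y^*\}$, passes to the associated graded Lie algebra, truncates it to a nonabelian nilpotent quotient, invokes Proposition~\ref{prop:involutionnilpotent} to locate a $*$-invariant Heisenberg subalgebra there with generators $u,v\in V$ carrying one of the three standard involutions, and then uses the hypothesis $[a,a^*]=0$ (via the substitution $u\pm v$) to rule out the involution of type (1) of Lemma~\ref{lem:equivalentinvolutionHeisenbergalgebra}; only afterwards does it set up the second filtration with $ku+kv$ in degree $-1$ and apply Corollary~\ref{coro:freesymmetricresiduallynilpotent}(3)(i) and Theorem~\ref{coro:divisionrings}, exactly as in your closing step. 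You instead polarize $[a,a^*]=0$ directly inside $L$ and extract a pure-type pair among $p=x+x^*$, $q=y+y^*$, $p'=x-x^*$, $q'=y-y^*$ by elementary linear algebra; your dichotomy is sound, since if every pair failed then $[p,q]$, $[p,q']$, $[p',q']$, which span $[V,V]$, would all lie in $V$, contradicting the hypothesis that the span is not a subalgebra. This bypasses the first graded passage and Proposition~\ref{prop:involutionnilpotent} entirely and makes the generators explicit, while the paper's version keeps the proof structurally parallel to its other theorems by reusing the nilpotent-algebra-with-involution machinery. One sharpening you missed: polarization also yields $[x,y]=[x^*,y^*]$ for all $x,y$ (replace $y$ by $y^*$ in $[x,y^*]=[x^*,y]$), so $[p,q']=[p',q]=0$ identically; hence your mixed case $(p,q')$ never occurs and the type-(1) branch with $S_1,T_2$ and the filtration of Theorem~\ref{theo:symmetricOre1} is vacuous -- in agreement with the paper, which shows type (1) is incompatible with $[a,a^*]=0$. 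This does not affect correctness; it just means the combinatorial step you feared reduces to the observation that either $[p,q]\notin V$ or $[p',q']\notin V$, and only the involutions (2) and (3) of Lemma~\ref{lem:equivalentinvolutionHeisenbergalgebra}, hence only the elements $S_1,T_1$, are ever needed.
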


\begin{proof}
Let $L_1$ be the Lie $k$-subalgebra of $L$ generated by $\{x,x^*,y,y^*\}$.
Since $U(L)$ is an Ore domain, $U(L_1)$ is also an Ore domain.
Moreover, $\mathfrak{D}(L_1)\subseteq \mathfrak{D}(L)$. Thus, we may
suppose that $L$ is generated by $\{x,x^*,y,y^*\}$. Let $V$ be the $k$-subspace
spanned by $\{x,x^*,y,y^*\}$. 
Consider the filtration $F_{\mathbb{Z}}L=\{F_nL\}_{n\in\mathbb{Z}}$ of $L$
defined by 
$F_rL=0$ for all $r\geq 0$,  $F_{-1}L=V$, $F_{-2}L=[V,V]+F_{-1}L$, 
and for $n\leq -2$,
$$F_{n-1}L=\sum_{n_1+n_2+\dotsb+n_r\geq (n-1)}[F_{n_1}L,[F_{n_2}L,\dotsc] \dotsb].$$
Note that $F_{n}L$ is invariant under $*$ for all $n\in\mathbb{Z}$. Thus the involution
on $L$ induces an involution on $\gr_{F_\mathbb{Z}}(L)$. Define now 
$N=\bigoplus_{n\leq -3}L_n$. Then $N$ is an ideal of $\gr_{F_\mathbb{Z}}(L)$  such that
$\gr_{F_\mathbb{Z}}(L)/N$ is a nonabelian nilpotent Lie $k$-algebra because
$[V,V]$ is not contained in $V$ by assumption. Moreover $N$
is invariant under $*$ and thus the involution on $\gr_{F_\mathbb{Z}}(L)$ induces an involution
on $\gr_{F_\mathbb{Z}}(L)/N$ again denoted by $*$. By Proposition~\ref{prop:involutionnilpotent},
there exist $u,v\in \gr_{F_\mathbb{Z}}(L)/N$ such that they generate
a $*$-invariant Heisenberg Lie $k$-subalgebra of $\gr_{F_\mathbb{Z}}(L)/N$
and the restriction to it is one of the involutions in Lemma~\ref{lem:equivalentinvolutionHeisenbergalgebra}.
Note that $F_{-1}L=L_{-1}$. Also 
$\gr_{F_\mathbb{Z}}(L)/N\cong L_{-1}\oplus L_{-2}$ as $k$-vector spaces, 
and the induced product $[L_{-1},L_{-2}]=0$.
Thus we can choose $u,v\in L_{-1}=F_{-1}L$.

Suppose that the involution on $\gr_{F_\mathbb{Z}}(L)/N$ is like the one in 
Lemma~\ref{lem:equivalentinvolutionHeisenbergalgebra}(1), i.e. $u^*=u$, $v^*=-v$ 
and $w^*=w$, where $w=[v,u]$. Then take $u_1=u+v,\, v_1=u-v\in L_{-1}$.
Note that $u_1^*=v_1$  and $[u_1,v_1]=2[v,u]\neq 0$, a contradiction
with our assumption that $[x,x^*]=0$ for all $x\in L$. Hence the involution on
the Heisenberg subalgebra of $\gr_{F_\mathbb{Z}}(L)/N$ generated by $u,v$
is of type either Lemma~\ref{lem:equivalentinvolutionHeisenbergalgebra}(2)
or Lemma~\ref{lem:equivalentinvolutionHeisenbergalgebra}(3).

Let $L_2$ be the Lie $k$-subalgebra of $L$ generated by $\{u,v\}$.
Since $U(L)$ is an Ore domain, $U(L_2)$ is also an Ore domain.
Moreover, $\mathfrak{D}(L_2)\subseteq \mathfrak{D}(L)$. Thus, we may
suppose that $L$ is generated by $\{u,v\}$. Let $V$ be the $k$-subspace
spanned by $\{u,v\}$. 
Consider the filtration $F_{\mathbb{Z}}L=\{F_nL\}_{n\in\mathbb{Z}}$ of $L$
defined by 
$F_rL=0$ for all $r\geq 0$,  $F_{-1}L=V$, $F_{-2}L=[V,V]+F_{-1}L$, 
and for $n\leq -2$,
$$F_{n-1}L=\sum_{n_1+n_2+\dotsb+n_r\geq (n-1)}[F_{n_1}L,[F_{n_2}L,\dotsc] \dotsb].$$
Note that $F_{n}L$ is invariant under $*$ for all $n\in\mathbb{Z}$. Thus the involution
on $L$ induces an involution on $\gr_{F_\mathbb{Z}}(L)$. Define now 
$N=\bigoplus_{n\leq -3}L_n$. Then $N$ is an ideal of $\gr_{F_\mathbb{Z}}(L)$  such that
$\gr_{F_\mathbb{Z}}(L)/N$ is the Heisenberg Lie $k$-algebra and the involution induced on it
is of type either Lemma~\ref{lem:equivalentinvolutionHeisenbergalgebra}(2)
or Lemma~\ref{lem:equivalentinvolutionHeisenbergalgebra}(3).

Observe that, for each $n\in\mathbb{Z}$, there exists   $\mathcal{B}_{n}\subseteq L$
whose classes give a basis of $L_{n}=F_nL/F_{n+1}L$ such that
$\bigcup\limits_{n\in\integers} \mathcal{B}_{n}$ is a basis of $L$.
This filtration on $L$ induces a filtration 
$F_\mathbb{Z}U(L)=\{F_nU(L)\}_{n\in\mathbb{Z}}$  on $U(L)$ as shown
in Section~\ref{sec:filtrationuniversal}. Moreover, by
Lemma~\ref{lem:filtrationuniversalenveloping}, there exists an isomorphism of $\mathbb{Z}$-graded $k$-algebras
\begin{equation}\label{eq:isomorphismofgraded3}
U(\gr_{F_\mathbb{Z}}(L)) \cong  \gr_{F_\mathbb{Z}}(U(L)).
\end{equation}
 which induces a valuation $\upsilon\colon U(L)\rightarrow
\mathbb{Z}\cup\{\infty\}$ as in Section~\ref{sec:generalfiltrations}.
In what follows,  the two
objects in \eqref{eq:isomorphismofgraded3} will be identified
via the isomorphism given in either \cite[Proposition~1]{Vergne} or 
\cite[Lemma~2.1.2]{Boiscorpsenveloppants}. This isomorphism sends
the class of an element of $\mathcal{B}_n$ in $L_n$ to
its class in $U(L)_n$.

The valuation $\upsilon\colon U(L)\rightarrow
\mathbb{Z}\cup\{\infty\}$ can be
extended to a valuation $\upsilon\colon \mathfrak{D}(L)\rightarrow
\mathbb{Z}\cup\{\infty\}$ \cite[Proposition~9.1.1]{Cohnskew}. 

Consider $u,v$ and $w=[v,u]$. Note that $\upsilon(u)=\upsilon(v)=-1$
and $\upsilon(w)=-2$ because $L$ is not two-dimensional.  Denote by
$\bar{u}$, $\bar{v}$ the class of $u,v\in U(L)_{-1}$ and also the
class of $u$ and $v$ in $L_{-1}$. Denote by $\bar{w}$ the class of
$w$ in $U(L)_{-2}$ and in $L_{-2}$.  By
Lemma~\ref{lem:gradedOre}(4), $U(\gr_{F_\mathbb{Z}}(L))$ is an Ore domain.
Let $\mathfrak{D}(\gr_{F_\mathbb{Z}}(L))$ be its Ore ring of fractions.

Observe that $[\bar{v},\bar{u}]=\bar{w}$ as elements of
$\gr_{F_\mathbb{Z}}(L)$.
Define
$\overline{V}=\frac{1}{2}(\bar{u}\bar{v}+\bar{v}\bar{u})$, and consider the following
elements of $\mathfrak{D}(\gr_{F_\mathbb{Z}}(L))$:
$$\overline{S}_1=\bar{w}^{-1}\Big((\overline{V}-\frac{1}{3}\bar{w})
(\overline{V}+\frac{1}{3}\bar{w})^{-1}+ (\overline{V}-\frac{1}{3}\bar{w})^{-1}(\overline{V}+\frac{1}{3}\bar{w})\Big) \bar{w}^{-1},$$
 $$\overline{T}_1=(\bar{w}+\bar{v}^2)^{-1}(\bar{w}-\bar{v}^2)\overline{S}_1(\bar{w}+\bar{v}^2)(\bar{w}-\bar{v}^2)^{-1}.$$

By Corollary~\ref{coro:freesymmetricresiduallynilpotent}(3)(i)(b), 
the $k$-subalgebra of $\mathfrak{D}(\gr_{F_\mathbb{Z}}(L))$ 
generated by $\{\overline{S}_1\overline{S}_1^*, \overline{T}_1\overline{T}_1^*\}$ is the free 
 $k$-algebra on $\{\overline{S}_1\overline{S}_1^*, \overline{T}_2\overline{T}_2^*\}$.
Let $\mathcal{H}$ be the set of homogeneous elements of $\gr_{F_\mathbb{Z}}(U(L))$. From
\eqref{eq:isomorphismofgraded3} and Lemma~\ref{lem:gradedOre}, we
obtain the following commutative diagram
$$\xymatrix{\gr_{F_\mathbb{Z}}(U(L))\cong U(\gr_{F_\mathbb{Z}}(L))\ar@{^{(}->}[r]\ar@{^{(}->}[d]
 &
\mathfrak{D}(\gr_{F_\mathbb{Z}}(L))\\
\mathcal{H}^{-1}\gr_{F_\mathbb{Z}}(U(L))\cong\gr_{F_\mathbb{Z}}(\mathfrak{D}(L))
\ar@{^{(}->}[ur] & },$$ where the diagonal arrow is obtained from the
universal property of the Ore localization. Note that
$\overline{V},\, \overline{V}-\frac{1}{3}\bar{w},\,
\overline{V}+\frac{1}{3}\bar{w}$ are homogeneous elements of degree $-3$,
and the elements  $\bar{w}+\bar{v}^2,\,
\bar{w}-\bar{v}^2$ are homogeneous elements of degree $-2$ in
$\gr_{F_\mathbb{Z}}(U(L))$. Thus $\overline{S}_1$,
$\overline{S}_1^*$, $\overline{T}_1$,
$\overline{T}_1^*$ are in fact homogeneous elements
of degree $-4$ in $\gr_{F_\mathbb{Z}}(\mathfrak{D}(L))$.

Now observe that $S_1$, $S_1^*$, $T_2$ and $T_2^*$ are elements of $\mathfrak{D}(L)$ such
that $\upsilon(S_1)=\upsilon(S_1^*)=\upsilon(T_2)=\upsilon(T_2^*)=4$,
hence $\upsilon(S_1S_1^*)=8$, $\upsilon(T_2T_2^*)=8$  and
$\overline{S}_1\overline{S}_1^*=S_1S_1^*+\mathfrak{D}(L)_{>8},\,
\overline{T}_2\overline{T}_2^*=T_2T_2^*+\mathfrak{D}(L)_{>8}$ in
$\gr_{F_\mathbb{Z}}(\mathfrak{D}(L))$.

Defining $A=1+S_1S_1^*$ and $B=1+T_1T_1^*$, the result follows from Theorem~\ref{coro:divisionrings}.
\end{proof}

As a corollary, we obtain a generalization of \cite[Theorem~5.2]{FerreiraGoncalvesSanchez2},
where the existence of a free $k$-algebra was proved. We recall that
the principal involution on a Lie $k$-algebra $L$ is defined by $L\rightarrow L$, $f\mapsto -f$. 

\begin{coro}\label{coro:symmeticprincipalinvolution}
Let $k$ be a field of characteristic zero and $L$ be a Lie $k$-algebra such that its
universal enveloping algebra $U(L)$ is an Ore domain. Let
$\mathfrak{D}(L)$ be its Ore ring of fractions. Let $u,v\in L$ be such that
the Lie subalgebra generated by them is of dimension at least three.  
Then there exist symmetric elements $A,B\in\mathfrak{D}(L)$ with respect
to the principal involution such that the $k$-subalgebra
generated by $\{A,A^{-1},B,B^{-1}\}$ is the free group $k$-algebra on $\{A,B\}$. \qed
\end{coro}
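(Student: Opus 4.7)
The plan is to reduce the statement directly to Theorem~\ref{theo:symmetricOre2}. With $*$ the principal involution, we have $x^{*}=-x$ for every $x\in L$, so
\[
[x^{*},x]=[-x,x]=0 \quad\text{for all } x\in L.
\]
Hence the standing hypothesis $[x,x^{*}]=0$ of Theorem~\ref{theo:symmetricOre2} is automatically satisfied. It remains to exhibit a pair $(x,y)\in L\times L$ fulfilling the remaining two conditions of that theorem, and this is where the assumption on $u,v$ enters.

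Take $x=u$ and $y=v$. By hypothesis the Lie $k$-subalgebra $M$ of $L$ generated by $\{u,v\}$ has dimension at least three, so in particular $[v,u]\neq 0$, giving $[y,x]\neq 0$. Moreover, since $x^{*}=-x$ and $y^{*}=-y$, the $k$-subspace of $L$ spanned by $\{x,x^{*},y,y^{*}\}=\{u,-u,v,-v\}$ coincides with $ku+kv$ and therefore has dimension at most two. On the other hand, the Lie $k$-subalgebra generated by $\{x,x^{*},y,y^{*}\}$ is exactly $M$, which has dimension $\geq 3$. Consequently the spanned $k$-subspace is strictly contained in the generated Lie subalgebra, as required.

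With both hypotheses of Theorem~\ref{theo:symmetricOre2} verified, that theorem yields symmetric elements $A,B\in\mathfrak{D}(L)$ (with respect to the extension of $*$, which is precisely the principal involution extended to $\mathfrak{D}(L)$) such that the $k$-subalgebra of $\mathfrak{D}(L)$ generated by $\{A,A^{-1},B,B^{-1}\}$ is the free group $k$-algebra on $\{A,B\}$. No further work is needed, and there is no real obstacle: the corollary is a clean specialization, the only content being the translation of the purely algebraic hypothesis on $\{u,v\}$ into the dimension comparison required by Theorem~\ref{theo:symmetricOre2}.
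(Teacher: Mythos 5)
Your proposal is correct and is exactly the argument the paper intends: Corollary~\ref{coro:symmeticprincipalinvolution} is stated as an immediate consequence of Theorem~\ref{theo:symmetricOre2}, obtained by noting that the principal involution satisfies $[x,x^*]=0$ for all $x\in L$ and by taking $x=u$, $y=v$, so that the span of $\{u,-u,v,-v\}$ has dimension at most two while the Lie subalgebra it generates has dimension at least three. Your verification of the hypotheses (including $[v,u]\neq 0$) is complete, so nothing further is needed.
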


\section{Free group algebras in the Malcev-Neumann division ring of fractions
of a residually torsion-free nilpotent group}\label{sec:Heisenberggroup}

In this section, for a group $G$ and elements $x,y\in G$, $(y,x)$ denotes the
commutator $(y,x)=y^{-1}x^{-1}yx$. Also, if $A,B$ are subgroups of $G$, $(B,A)$
denotes the subgroup of $G$ generated by the commutators $(y,x)$ with $y\in B$, $x\in A$.

\medskip

Let $R$ be a ring and  $(G,<)$ be an ordered group. Suppose that
$R[G]$ is the group ring of $G$ over $R$.
We define a new ring,
denoted $R((G;<))$ and called \emph{Malcev-Neumann
series ring}, in which $R[G]$ embeds. As a set,
\[
R((G;<))=\Bigl\{f=\sum_{x\in G}a_x x :  a_x\in R,\ \supp(f) 
\text{ is well-ordered}\Bigr\},
\] where $\supp(f)=\{x\in G\mid a_x\neq 0\}$.
Addition and multiplication are defined extending the ones in
$R[G]$. That is, given $f=\sum\limits_{x\in G}a_x x
$ and $g=\sum\limits_{x\in G}b_x x$, elements of
$R((G;<))$, one has
\[
f+g=\sum_{x\in G}(a_x+b_x) x \quad\text{and}\quad
fg=\sum_{x\in G}\Bigl(\sum_{yz=x}a_yb_z \Bigr)x.
\]

It was shown, independently, in \cite{Malcev}
and \cite{Neumann} that if 
$R$ is a division ring, then
$R((G;<))$ is a division ring. 

If $k$ is a  field, the division subring of $k((G;<))$
generated by the group ring $k[G]$ will be
called the \emph{Malcev-Neumann division ring of fractions} of
$k[G]$ and it will be denoted by $k(G)$. It
is important to observe the following. For a subgroup $H$ of $G$,
$k((H;<))$ and $k(H)$ can be regarded
as division subrings of  $k((G;<))$ and
$k(G)$, respectively, in the natural way.  We remark that
$k(G)$ does not depend on the order $<$ of $G$,
see \cite{Hughes}. When the group ring $k[G]$ is 
an Ore domain, then $k(G)$ is the Ore ring of fractions of $k[G]$. 

\medskip

An \emph{involution} \label{involutiongroup}
on a group $G$ is a map $*\colon G\rightarrow G$, $x\mapsto x^*$,
that satisfies $$(xy)^*=y^*x^*\ \textrm{ and } \ (x^*)^*=x \textrm{ for all } x,y\in G.$$
In other words, $*$ is an anti-automorphism of order two. 

Suppose that $G$ is a group endowed with an involution $*\colon G\rightarrow G$, $x\mapsto x^*$,
$k$ is a field and $k[G]$ is the group $k$-algebra. The map
$*\colon k[G]\rightarrow k[G]$ defined by $\Big(\sum\limits_{x\in G} a_xx\Big)^*=\sum\limits_{x\in G}
a_xx^*$ is a $k$-involution on $k[G]$.

If $(G,<)$ is an ordered group, we remark that the $k$-involution on the group algebra $k[G]$ 
induced the involution
$\ast$ on $G$ extends uniquely to a $k$-involution on the Malcev-Neumann
division ring of fractions $k(G)$ of $k[G]$,
see \cite[Theorem~2.9]{FerreiraGoncalvesSanchez1}.

\medskip

Let $G$ be a group. If $H$ is a subgroup of $G$, we denote by $\sqrt{H}$ the subset of $G$ defined by
\[\sqrt{H}=\{x\in G : x^n\in H \text{, for some } n\geq 1\}.\]
We shall denote the $n$-th
term of the lower central series of $G$ by $\gamma_n(G)$. That is,
we set $\gamma_1(G)=G$ and, for $n\ge 1$, define 
$\gamma_{n+1}(G)=(G,\gamma_n(G))$.

A group $G$ is \emph{residually torsion-free nilpotent} if for
each $g\in G$, there exists a normal subgroup $N_g$ of $G$ such that
$g\notin N_g$ and $G/N_g$ is torsion-free nilpotent. Equivalently,
$\bigcap_{n\geq 1}\sqrt{\gamma_n(G)}=\{1\}$. 
It is well known that any residually torsion-free nilpotent group  is orderable, see
for example \cite[Theorem~IV.6]{Fuchs}.

Let $G$ be a residually torsion-free nilpotent group, $k$ be a field of characteristic zero
and consider the group algebra $k[G]$. Consider an involution on $G$
and its extension to the Malcev-Neumann
division ring of fractions $k(G)$ of $k[G]$. 
The aim of this section is to prove that
 there exist symmetric elements in $k(G)$ that generate a noncommutative
free group $k$-algebra. For that we will need the following discussion.

\bigskip

Let $G$ be a torsion-free nilpotent group. An \emph{$\mathcal{N}$-series} of $G$ is a sequence
$\{H_i\}_{i\geq 1}$, 
$$G=H_1\supseteq H_2\supseteq\dotsb\supseteq H_n\supseteq\dotsb$$
of normal subgroups of $G$ that satisfies the following three conditions 
$$(H_i,H_j)\subseteq H_{i+j},\quad \bigcap_{i\geq 1}H_i=\{1\},\quad G/H_i \textrm{ is torsion free for all } i\geq 1.$$
The $\mathcal{N}$-series induces a \emph{weight function} $w\colon G\rightarrow \mathbb{N}\cup\{\infty\}$ defined
by $w(g)=i$ if $g\in H_i\setminus H_{i+1}$ and $w(1)=\infty.$

Let $k$ be a field of characteristic zero, $G$ torsion-free nilpotent group
with an $\mathcal{N}$-series $\{H_i\}_{i\geq 1}$ and consider the group ring $k[G]$. The
$\mathcal{N}$-series defines the \emph{cannonical filtration}, $F_\mathbb{Z}k[G]=\{F_nk[G]\}_{n\in\mathbb{Z}}$, 
induced by $\{H_i\}_{i\geq 1}$ which is defined by
$F_nk[G]=k[G]$, for all $n\leq 0$, and for $n\geq 1$, $F_nk[G]$ is the $k$-vector space
spanned by the set $$\Big\{(h_1-1)(h_2-1)\cdots(h_s-1)\colon s\geq 1,\ \sum_{j=1}^s w(h_j)\geq n\Big\}$$ 
Note that $F_1k[G]$ is the augmentation ideal of $k[G]$ and that $F_nk[G]\cdot F_mk[G]\subseteq F_{n+m}k[G]$.

For each $i\geq 1$, $H_i/H_{i+1}$ is an abelian group. Denote the operation
additively. More precisely, if $x_i,x_i'\in H_i$, $\widetilde{x_i}$ denotes the class $x_iH_{i+1}$.
Then $\widetilde{x_i}+\widetilde{x_i'}=\widetilde{x_ix_i'}$ in $H_i/H_{i+1}$.
The abelian group $L(G)=\bigoplus\limits_{i\geq 1}H_i/H_{i+1}$ can be endowed
with a $\mathbb{Z}$-graded
Lie $\mathbb{Z}$-algebra structure with the following product on homogeneous elements
$[\widetilde{x_i},\widetilde{x_j}]=\widetilde{(x_i,x_j)}\in H_{i+j}/H_{i+j+1}$,
for $x_i\in H_i$, $x_j\in H_j$, and then extending by bilinearity. Then
$k\otimes_\mathbb{Z}L(G)$ is a Lie $k$-algebra with universal enveloping algebra
$U(k\otimes_\mathbb{Z}L(G))$. 

In \cite[Theorem~2.3]{LichtmanmatrixringsI},
Lichtman proved  
a more general version of \cite{Quillengraded},
in a similar way as  Quillen's result is shown in \cite[Chapter~VIII]{Passibookaugmentation}. 
Lichtman's result implies that that
there exists an isomorphism of
$\mathbb{Z}$-graded $k$-algebras 
\begin{eqnarray}\label{eq:isomorphismQuillen}
\Theta\colon U(k\otimes_\mathbb{Z}L(G)) &\rightarrow &\gr_{F_\mathbb{Z}}(k[G]) \\
 \widetilde{x_i} & \mapsto &(x_i-1)+F_{i+1}k[G]. \nonumber
\end{eqnarray}
\bigskip

Let $\mathbb{H}=\langle a,b\mid (b,(b,a))=(a,(b,a))=1 \rangle$ be the Heisenberg group. 
Define $c=(b,a)$. 
Consider the following \emph{main involutions} of $\mathbb{H}$ which are defined on the generators and
extended accordingly. 
\begin{enumerate}[(1)]
	\item $a^*=a$, $b^*=b^{-1}$ and $c^*=c$.
	\item $a^*=a$, $b^*=b$ and $c^*=c^{-1}$.
	\item $a^*=a^{-1}$, $b^*=b^{-1}$ and $c^*=c^{-1}$.
\end{enumerate}

\begin{prop}\label{prop:symmetricHeisenberggroup}
Let $k$ be a field of characteristic zero. 
Let $\mathbb{H}=\langle a,b\mid (b,(b,a))=(a,(b,a))=1 \rangle$ be the Heisenberg group and $c=(b,a)$. 
Consider the
group $k$-algebra $k[\mathbb{H}]$ and its Ore ring of fractions $k(\mathbb{H})$. 
Consider the elements of $k(H)$
$$\mathcal{V}=\frac{1}{2}((a-1)(b-1)+(b-1)(a-1)),$$
$$\mathcal{S}_2=(c-1)\Big((\mathcal{V}-\frac{1}{3}(c-1))(\mathcal{V}+\frac{1}{3}(c-1))^{-1}+
(\mathcal{V}-\frac{1}{3}(c-1))^{-1}(\mathcal{V}+\frac{1}{3}(c-1))\Big)(c-1),$$
$$\mathcal{T}_3=((c-1)+(b-1)^2)^{-1}((c-1)-(b-1)^2) \mathcal{S}_2 ((c-1)+(b-1)^2)((c-1)-(b-1)^2)^{-1},$$
$$\mathcal{T}_4= ((c-1)^2+(b-1)^3)^{-1}((c-1)^2-(b-1)^3)\mathcal{S}_2((c-1)^2+(b-1)^3)((c-1)^2-(b-1)^3)^{-1}.$$
The following statements hold true.
\begin{enumerate}[\rm(1)]
	\item Suppose that $*\colon \mathbb{H}\rightarrow \mathbb{H}$ is one of the main involutions (2) or (3) above.
	Then the $k$-subalgebra of $k(\mathbb{H})$ generated by
	$$\{1+\mathcal{S}_2\mathcal{S}_2^*, 
	(1+\mathcal{S}_2\mathcal{S}_2^*)^{-1}, (1+\mathcal{T}_3\mathcal{T}_3^*), (1+\mathcal{T}_3\mathcal{T}_3^*)^{-1} \}$$
	is the free group $k$-algebra on the set $\{1+\mathcal{S}_2\mathcal{S}_2^*, 
	 (1+\mathcal{T}_3\mathcal{T}_3^*)\}$.

	\item Suppose that $*\colon \mathbb{H}\rightarrow \mathbb{H}$ is the main involution (1) above.
	Then the $k$-subalgebra of $k(\mathbb{H})$ generated by
	$$\{1+\mathcal{S}_2\mathcal{S}_2^*, 
	(1+\mathcal{S}_2\mathcal{S}_2^*)^{-1}, (1+\mathcal{T}_4\mathcal{T}_4^*), (1+\mathcal{T}_4\mathcal{T}_4^*)^{-1} \}$$
	is the free group $k$-algebra on the set $\{1+\mathcal{S}_2\mathcal{S}_2^*, 
	 (1+\mathcal{T}_4\mathcal{T}_4^*)\}$.
\end{enumerate}
\end{prop}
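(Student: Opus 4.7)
The plan is to lift Theorem~\ref{theo:freegroupHeisenberg}(4)--(5) from the Heisenberg enveloping-algebra setting to $k(\mathbb{H})$ via the Quillen-type isomorphism~\eqref{eq:isomorphismQuillen}, and then promote the resulting free $k$-subalgebra of the associated graded to a free \emph{group} $k$-algebra in $k(\mathbb{H})$ itself via Theorem~\ref{coro:divisionrings}. Concretely, I would equip $\mathbb{H}$ with its canonical $\mathcal{N}$-series $H_1=\mathbb{H}$, $H_2=\gamma_2(\mathbb{H})=\langle c\rangle$, $H_i=\{1\}$ for $i\geq 3$, so that $k\otimes_{\mathbb{Z}}L(\mathbb{H})\cong H$ as $\mathbb{Z}$-graded Lie $k$-algebras with the gradation of Example~\ref{ex:gradedLie}(c), and the canonical filtration $F_{\mathbb{Z}}k[\mathbb{H}]$ is separating with associated graded ring isomorphic, via $\Theta$ of~\eqref{eq:isomorphismQuillen}, to $U(H)$. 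This yields a valuation $\upsilon$ on $k[\mathbb{H}]$ which, since $k[\mathbb{H}]$ is an Ore domain, extends to $k(\mathbb{H})$; by Lemma~\ref{lem:gradedOre}(3) we may identify $\gr_{\upsilon}(k(\mathbb{H}))\cong \mathcal{H}^{-1}U(H)\subseteq\mathfrak{D}(H)$, where $\mathcal{H}$ is the set of nonzero homogeneous elements of $U(H)$.

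Next I would verify that each main involution on $\mathbb{H}$ preserves every $F_n k[\mathbb{H}]$ and induces under $\Theta$ exactly the corresponding involution of Lemma~\ref{lem:equivalentinvolutionHeisenbergalgebra} on $H$: the elementary congruences $b^{-1}-1\equiv -(b-1)\pmod{F_2 k[\mathbb{H}]}$ and $c^{-1}-1\equiv -(c-1)\pmod{F_3 k[\mathbb{H}]}$ guarantee that main involution $(i)$ on $\mathbb{H}$ matches involution $(i)$ on $H$ for $i=1,2,3$. With the identification in place, a direct computation gives $\upsilon(\mathcal{V})=\upsilon(c-1)=\upsilon((c-1)\pm(b-1)^2)=2$ and $\upsilon((c-1)^2\pm(b-1)^3)=3$, with respective leading terms $V$, $z$, $z\pm y^2$, and $z^2\pm y^3$ in $\mathfrak{D}(H)$. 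Consequently $\upsilon(\mathcal{S}_2)=\upsilon(\mathcal{T}_3)=\upsilon(\mathcal{T}_4)=4$, and their leading terms are exactly $S_2$, $T_3$, $T_4$ from Theorem~\ref{theo:freegroupHeisenberg}.

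For part~(1), the involution induced on $H$ is of type (2) or (3) of Lemma~\ref{lem:equivalentinvolutionHeisenbergalgebra}; Theorem~\ref{theo:freegroupHeisenberg}(4)(a) then gives $S_2^{*}=S_2$ and $T_3^{*}=T_3$, so the leading terms of $\mathcal{S}_2\mathcal{S}_2^{*}$ and $\mathcal{T}_3\mathcal{T}_3^{*}$ in $\gr_{\upsilon}(k(\mathbb{H}))$ are $S_2^2$ and $T_3^2$, each of valuation $8>0$. Theorem~\ref{theo:freegroupHeisenberg}(4)(b) says $\{S_2^2,T_3^2\}$ freely generates a $k$-subalgebra of $\mathfrak{D}(H)$, a fortiori of $\gr_{\upsilon}(k(\mathbb{H}))$, so Theorem~\ref{coro:divisionrings} (applied with $k$ as central subfield) delivers the claimed free group $k$-algebra on $\{1+\mathcal{S}_2\mathcal{S}_2^{*},\ 1+\mathcal{T}_3\mathcal{T}_3^{*}\}$ inside $k(\mathbb{H})$. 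Part~(2) is strictly analogous: starting from main involution~(1) on $\mathbb{H}$ (which induces involution~(1) on $H$), one invokes Theorem~\ref{theo:freegroupHeisenberg}(5)(a)--(b) in place of (4)(a)--(b), noting that $\upsilon(\mathcal{T}_4)=4$ again, and applies Theorem~\ref{coro:divisionrings} in the same way.

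The main obstacle I anticipate is the careful bookkeeping of valuations and leading terms through the identification $\gr_\upsilon(k(\mathbb{H}))\supseteq U(H)$: one must confirm that in the innermost expression of $\mathcal{S}_2$ the sum $(\mathcal{V}-\tfrac{1}{3}(c-1))(\mathcal{V}+\tfrac{1}{3}(c-1))^{-1}+(\mathcal{V}-\tfrac{1}{3}(c-1))^{-1}(\mathcal{V}+\tfrac{1}{3}(c-1))$ really has valuation $0$ (no accidental cancellation of its leading term, matching the analogous non-vanishing inside $S_2$), and that the group involution on $\mathbb{H}$ translates into precisely the predicted signs on $V$, $z$, $y$ with no hidden correction terms arising from the non-commutativity of $a$ and $b$.
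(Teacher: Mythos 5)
Your treatment of part (1) is essentially the paper's own argument: same $\mathcal{N}$-series $\mathbb{H}\supseteq\langle c\rangle\supseteq\{1\}$, same identification of $\gr_\upsilon(k(\mathbb{H}))$ with $\mathcal{H}^{-1}U(H)\subseteq\mathfrak{D}(H)$ via the Quillen--Lichtman isomorphism, same transfer of the involution, and the same application of Theorem~\ref{theo:freegroupHeisenberg}(4) followed by Theorem~\ref{coro:divisionrings}. That part is fine (and your citation of Lemma~\ref{lem:gradedOre}(3) for the identification is the appropriate one).

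Part (2), however, has a genuine gap. You keep the single filtration coming from the lower central series, in which $a-1,b-1$ have weight $1$ and $c-1$ has weight $2$ (Example~\ref{ex:gradedLie}(c)). Under that filtration $(c-1)^2$ has valuation $4$ while $(b-1)^3$ has valuation $3$, so $(c-1)^2\pm(b-1)^3$ has valuation $3$ with leading term $\pm y^3$ --- not $z^2\pm y^3$, which is not even homogeneous in this grading (your own sentence claiming valuation $3$ together with leading term $z^2\pm y^3$ is internally inconsistent). Consequently the leading term of $\mathcal{T}_4$ is $(y^3)^{-1}(-y^3)\,S_2\,(y^3)(-y^3)^{-1}=S_2$, i.e.\ it coincides with the leading term of $\mathcal{S}_2$, so $\mathcal{S}_2\mathcal{S}_2^*$ and $\mathcal{T}_4\mathcal{T}_4^*$ both have leading term $S_2^2$. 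Then neither the injectivity hypothesis (1) nor the freeness hypothesis (3) of Theorem~\ref{coro:divisionrings} holds, and Theorem~\ref{theo:freegroupHeisenberg}(5)(b) cannot be brought to bear. The paper avoids this by changing the filtration for part (2): it uses the $\mathcal{N}$-series $\mathbb{H}\supseteq\langle b,c\rangle\supseteq\langle c\rangle\supseteq\{1\}$, whose associated graded Lie algebra is the Heisenberg algebra with the grading of Example~\ref{ex:gradedLie}(d) (weights $1,2,3$ for $a,b,c$). With those weights $\mathcal{V}$, $\mathcal{V}\pm\frac13(c-1)$ and $c-1$ are homogeneous-leading of degree $3$, $(c-1)^2\pm(b-1)^3$ of degree $6$ with leading terms $z^2\pm y^3$, and $\mathcal{S}_2,\mathcal{T}_4$ have leading terms exactly $S_2,T_4$ of degree $6$; one also checks that the main involution (1) preserves $\langle b,c\rangle$ and $\langle c\rangle$, so it still descends to the graded algebra. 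Only then does the argument of part (1) go through verbatim with Theorem~\ref{theo:freegroupHeisenberg}(5)(a)--(b).
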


\begin{proof}
(1) Consider the following $\mathcal{N}$-series of $\mathbb{H}$. 
$$H_1=\mathbb{H}\supseteq H_2=(c)\supseteq H_3=\{1\}.$$
If we set $x=aH_2,y=bH_2\in H_1/H_2$ and $z=cH_3\in H_2/H_3$, 
then the $\mathbb{Z}$-graded Lie $\mathbb{Z}$-algebra $L(H)$ has as
$\mathbb{Z}$-basis the elements $x,y,z$ with products $[y,x]=z$, $[y,z]=[x,z]=0$.
Hence the $\mathbb{Z}$ graded Lie $k$-algebra $k\otimes_\mathbb{Z} L(\mathbb{H})$ is the
Heisenberg Lie $k$-algebra $H$ with the
$\mathbb{Z}$-grading given in Example~\ref{ex:gradedLie}(c). 
The isomorphism \eqref{eq:isomorphismQuillen} implies that the cannonical filtration
induced by the $\mathcal{N}$-series is in fact a valuation, because the graded ring is
a domain. Since $k[\mathbb{H}]$ is an Ore domain, the valuation can be extended to
a valuation $\upsilon\colon k(\mathbb{H})\rightarrow \mathbb{Z}\cup\{\infty\}$.
If we let $\mathcal{H}$ be the homogeneous elements of $\gr_{F_\mathbb{Z}}(k[\mathbb{H}])$,
Lemma~\ref{lem:Reesgradedring}(3) implies that there exists an isomorphism
of $\mathbb{Z}$-graded $k$-algebras
\begin{equation}\label{eq:isoQuillenHeisenberg}
\gr_\upsilon(k(\mathbb{H}))\cong 
\mathcal{H}^{-1}\gr_{F_\mathbb{Z}}(k[\mathbb{H}])\cong \mathcal{H}^{-1}U(k\otimes_\mathbb{Z}L(\mathbb{H})).
\end{equation} 
Observe that $\mathcal{H}^{-1}U(k\otimes_\mathbb{Z}L(\mathbb{H}))\hookrightarrow 
\mathfrak{D}(k\otimes_\mathbb{Z}L(\mathbb{H}))$. Now note that 
$$\mathcal{V}, \mathcal{V}\pm(c-1), (c-1), (b-1)^2, (c-1)\pm(b-1)^2 \in F_2k[\mathbb{H}]\setminus F_3k[\mathbb{H}].$$
Hence the classes of these elements in $\gr_{F_\mathbb{Z}}(k[\mathbb{H}])$ are homogeneous of degree two.
It implies that the class of $\mathcal{S}_2$ and $\mathcal{T}_3$ in $\gr_{\upsilon}(k(\mathbb{H}))$
are homogeneous of degree four. Moreover, their image under the isomorphism \eqref{eq:isoQuillenHeisenberg}
are the elements $S_2,T_3\in \mathfrak{D}(k\otimes_\mathbb{Z}L(\mathbb{H}))$ given in 
Theorem~\ref{theo:freegroupHeisenberg}(4).

Since each $H_i$ is invariant under the involution $*$, it induces a $k$-involution
in the Lie $k$-algebra $k\otimes_\mathbb{Z}L(\mathbb{H})$. Hence the isomorphism \eqref{eq:isoQuillenHeisenberg}
is an isomorphism of $*$-algebras, i.e. $\Phi(f^*)=\Phi(f)^*$.
Note that the induced involution on $k\otimes_\mathbb{Z}L(\mathbb{H})$ is one of the
involutions in Lemma~\ref{lem:equivalentinvolutionHeisenbergalgebra}(2) or (3). By 
Theorem~\ref{theo:freegroupHeisenberg}(4)(a), the elements $S_2$, $T_2$ are symmetric with respect to 
the induced involution on $\mathfrak{D}(k\otimes_\mathbb{Z}L(\mathbb{H}))$. 
Hence, the image of the classes of $\mathcal{S}_2^*$ and $\mathcal{T}_3^*$ are also $S_2$
and $T_3$, respectively. The classes
of the elements $\mathcal{S}_2\mathcal{S}_2^*, \mathcal{T}_3\mathcal{T}_3^*$ 
in $\gr_{\upsilon}(k(\mathbb{H}))$
are homogeneous of degree $8$. Moreover,  they generate a free algebra in 
$\gr_{\upsilon}(k(\mathbb{H}))$, because $S_2^2$ and $T_3^2$ generate a free algebra
in $\mathfrak{D}(k\otimes_k L(\mathbb{H}))$ by Theorem~\ref{theo:freegroupHeisenberg}(4)(b).
Now the result follows by Theorem~\ref{coro:divisionrings}.

(2) It follows in the same way as (1). Now one has to consider the $\mathcal{N}$-series
$$H_1=G\supseteq H_2=\langle b,c\rangle\supseteq H_3=\langle c\rangle\supseteq H_4=\{1\}.$$
Then again $k\otimes_\mathbb{Z} L(H)$ is the Heisenberg Lie $k$-algebra, but with the gradation
given in Example~\ref{ex:gradedLie}(4). Then
the isomorphism in \eqref{eq:isoQuillenHeisenberg} (with a
different gradation) sends $\mathcal{S}_2$ and $\mathcal{T}_4$
to the elements $S_2$ and $T_4$ in Theorem~\ref{theo:freegroupHeisenberg}(5).
\end{proof}

The next result is 
\cite[Proposition~2.4]{FerreiraGoncalvesSanchezFreegroupssymmetric}

\begin{prop}\label{prop:InvHeis}
Let $G$ be a nonabelian torsion-free nilpotent group with involution $\ast$. Then $G$ 
contains a $\ast$-invariant Heisenberg subgroup $\mathbb{H}$ such that
the induced involution  is one of the main involutions of $\mathbb{H}$. More precisely, 
there exist $x,y\in G$ such that $(x,y)\ne 1$, $(x,(x,y))=(y,(x,y))=1$, 
$x^{\ast}=x^{\pm 1}, y^{\ast}=y^{\pm 1}$. \qed
\end{prop}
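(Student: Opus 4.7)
The plan is to mirror the proof of Proposition~\ref{prop:involutionnilpotent} (the Lie-algebra analogue) in the group setting, arguing by induction on the nilpotency class $c$ of $G$. I would first reduce to the finitely generated setting by picking $x_0, y_0 \in G$ with $(y_0, x_0) \neq 1$ and passing to the $*$-invariant subgroup $\langle x_0, y_0, x_0^*, y_0^*\rangle$, which remains nonabelian, torsion-free, and nilpotent of the same class.

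For the base case $c = 2$, let $Z = Z(G)$; then $A = G/Z$ is a finitely generated torsion-free abelian group carrying an order-two automorphism $\sigma$ induced by $*$. Decomposing $A \otimes \mathbb{Q}$ into $\pm 1$-eigenspaces produces elements $\overline{u}_1, \ldots, \overline{u}_n \in A$ whose images are eigenvectors with eigenvalues $\varepsilon_i \in \{\pm 1\}$, and any lift $u_i \in G$ satisfies $u_i^* = u_i^{\varepsilon_i} z_i$ for some $z_i \in Z$. Since $G$ is nonabelian, the central commutator pairing $\beta\colon A \times A \to Z$ is nontrivial, so there are $i \neq j$ with $(u_i, u_j) \neq 1$. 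Instead of the Lie-theoretic averaging (which would need division by~$2$), I would use the multiplicative doubling $x_i = u_i u_i^{*\varepsilon_i}$; one checks directly that $x_i^* = x_i^{\varepsilon_i}$ on the nose and that $\overline{x_i} = 2\overline{u_i}$ in $A$. Expanding $(x_j, x_i)$ by bilinearity of $\beta$ shows it equals $(u_j, u_i)^{4}$ up to a central correction, which is nonzero in the torsion-free $Z$, so $\langle x_i, x_j, (x_j, x_i)\rangle$ is a $*$-invariant Heisenberg subgroup. Using $c^* = \beta(\sigma \overline{u_j}, \sigma \overline{u_i}) = \varepsilon_i \varepsilon_j \cdot c^{-1}$ (the extra inversion coming from antisymmetry of $\beta$), the four sign combinations of $(\varepsilon_i, \varepsilon_j)$ match the three main involutions of $\mathbb{H}$: $(+,+)$ yields involution~(2), $(-,-)$ yields~(3), and $(\pm, \mp)$ yield~(1).

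For the inductive step $c \geq 3$: the center $Z = Z(G)$ is isolated in the torsion-free nilpotent group $G$, so $G/Z$ is a nonabelian torsion-free nilpotent group of class $c - 1$ with induced involution. By the inductive hypothesis, $G/Z$ contains a $*$-invariant Heisenberg subgroup generated by $\overline{x}, \overline{y}$ with $\overline{x}^* = \overline{x}^{\pm 1}$ and $\overline{y}^* = \overline{y}^{\pm 1}$. Lift to $x, y \in G$, set $z = (y, x)$, and consider $M = \langle x, y, Z\rangle$, which is $*$-invariant of nilpotency class at most~$3$ (since $M/Z$ is Heisenberg). If $M$ has class $\leq 2$, the base case applies directly to $M$. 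Otherwise, I would pass to the subgroup $K = \langle x, x^*, z, z^*\rangle$ (or $\langle y, y^*, z, z^*\rangle$, whichever is nonabelian), verify it is $*$-invariant of class~$2$ by using that $z, z^* \in \gamma_2(M)$ forces their brackets with $K$ into the central $\gamma_3(M)$ and that $x^* x^{\mp 1} \in Z$ implies $(x, x^*) = 1$, and then apply the base case to $K$.

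The main obstacle will be the base case: carrying out the commutator arithmetic with the multiplicative doublings $x_i = u_i u_i^{*\varepsilon_i}$ and matching the resulting involution on the ambient Heisenberg subgroup to one of the three main involutions of $\mathbb{H}$. This case analysis is intrinsically multiplicative (rather than additive, as in the Lie setting) and requires careful bookkeeping with the antisymmetry of $\beta$ and the centrality of the $z_i$; its success is forced by the torsion-freeness of $Z$ together with the bilinearity of the commutator pairing, exactly as in the corresponding group-theoretic argument of \cite{FerreiraGoncalvesSanchezFreegroupssymmetric}.
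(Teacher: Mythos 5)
Your proposal is correct and follows essentially the same route as the paper's source for this result: the statement is quoted from \cite[Proposition~2.4]{FerreiraGoncalvesSanchezFreegroupssymmetric}, whose proof is the group-theoretic counterpart of Lemma~\ref{lem:involutionclass2} and Proposition~\ref{prop:involutionnilpotent} here (induction on the nilpotency class, the $\pm1$-eigenvalue analysis of the induced automorphism of $G/Z$ in class $2$, and in the inductive step passage to $M=\langle x,y,Z\rangle$ and then to $\langle x,x^*,z,z^*\rangle$). Your multiplicative replacements $u\,u^{*\,\pm1}$ for the Lie-algebra averaging $\tfrac12(u\pm u^*)$, together with the bilinearity of the class-$2$ commutator pairing and torsion-freeness of the center, are exactly the adaptation used there, so no essential difference or gap.
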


Recall that given a group $G$ and a field $k$ such that $k[G]$ is an Ore domain,
then $k[N]$ is an Ore domain for any subgroup $N$ of $G$. Hence, if $G$ is a torsion-free nilpotent group
and $\mathbb{H}$ is a subgroup of $G$, then $k(\mathbb{H})$ is embedded in $k(G)$.
This fact, together with Propositions~\ref{prop:InvHeis} and \ref{prop:symmetricHeisenberggroup},
imply the following result.

\begin{theo}\label{theo:symmetricnilpotentgroup}
Let $G$ be a nonabelian torsion-free nilpotent group with an involution $\ast\colon G\rightarrow
G$ and $k$ be a field
of characteristic zero. Consider the group ring $k[G]$ and its Ore ring of fractions $k(G)$. 
Then there exist nonzero symmetric elements $A,B\in k(G)$ such that the $k$-subalgebra 
generated by $\{A,A^{-1},B,B^{-1}\}$ is the free group $k$-algebra on the set $\{A,B\}$. \qed
\end{theo}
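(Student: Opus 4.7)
The plan is to reduce the theorem to the Heisenberg case that was already handled in Proposition~\ref{prop:symmetricHeisenberggroup}, and then transfer the result along a suitable embedding of division rings with involution. First, I would apply Proposition~\ref{prop:InvHeis} to obtain a $\ast$-invariant Heisenberg subgroup $\mathbb{H}=\langle a,b\mid (b,(b,a))=(a,(b,a))=1\rangle$ of $G$ such that the restriction of $\ast$ to $\mathbb{H}$ is (conjugate to) one of the three main involutions listed immediately before Proposition~\ref{prop:symmetricHeisenberggroup}.

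Next, I would verify that the natural inclusion $k(\mathbb{H})\hookrightarrow k(G)$ respects the involutions. Since $G$ is torsion-free nilpotent, it is orderable, so choosing an order $<$ on $G$ gives an order on $\mathbb{H}$, and we obtain commuting inclusions $k[\mathbb{H}]\subseteq k[G]\subseteq k((G;<))$ with $k((\mathbb{H};<))\subseteq k((G;<))$; the division subring generated by $k[\mathbb{H}]$ inside $k((G;<))$ is $k(\mathbb{H})$, and it is contained in $k(G)$ (this is the remark recalled in Section~\ref{sec:Heisenberggroup}). The $k$-involution on $k[G]$ induced by $\ast$ restricts to the $k$-involution on $k[\mathbb{H}]$ induced by $\ast|_{\mathbb{H}}$. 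By the uniqueness of the extension of the involution from the group ring to its Malcev--Neumann division ring of fractions (\cite[Theorem~2.9]{FerreiraGoncalvesSanchez1}, cited in the excerpt), the unique extension of $\ast$ to $k(G)$ restricts to the unique extension of $\ast|_{\mathbb{H}}$ to $k(\mathbb{H})$. Hence $k(\mathbb{H})\hookrightarrow k(G)$ is an embedding of $k$-algebras with involution.

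Once these compatibilities are in place, Proposition~\ref{prop:symmetricHeisenberggroup} immediately supplies explicit nonzero symmetric elements $A,B\in k(\mathbb{H})$ generating a noncommutative free group $k$-algebra on $\{A,B\}$: if the restricted involution is of type (2) or (3), take $A=1+\mathcal{S}_{2}\mathcal{S}_{2}^{\ast}$ and $B=1+\mathcal{T}_{3}\mathcal{T}_{3}^{\ast}$, and if it is of type (1), take $A=1+\mathcal{S}_{2}\mathcal{S}_{2}^{\ast}$ and $B=1+\mathcal{T}_{4}\mathcal{T}_{4}^{\ast}$. Because the inclusion $k(\mathbb{H})\hookrightarrow k(G)$ is an embedding of $k$-algebras with involution, the same elements are symmetric in $k(G)$ and still generate the free group $k$-algebra on $\{A,B\}$, which finishes the proof. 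The only genuine subtlety is the compatibility of involutions under $k(\mathbb{H})\hookrightarrow k(G)$; I do not expect any further obstacle, since all the delicate analytic work (specializing to the Weyl algebra, applying Cauchon's theorem, and passing between graded and valued pictures) has already been carried out in Proposition~\ref{prop:symmetricHeisenberggroup}.
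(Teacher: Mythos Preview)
Your proposal is correct and follows essentially the same approach as the paper: the paper's proof (stated in the paragraph immediately preceding the theorem) is precisely to invoke Proposition~\ref{prop:InvHeis} to obtain a $\ast$-invariant Heisenberg subgroup $\mathbb{H}$ with one of the main involutions, then apply Proposition~\ref{prop:symmetricHeisenberggroup} and transport along the embedding $k(\mathbb{H})\hookrightarrow k(G)$. Your discussion of involution compatibility via Malcev--Neumann series is a bit more elaborate than the paper's, which simply uses that $k[G]$ Ore implies $k[\mathbb{H}]$ Ore so that $k(\mathbb{H})$ embeds in $k(G)$ as Ore rings of fractions, but this is a minor presentational difference rather than a different argument.
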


\begin{theo}
 Let $G$ be a residually torsion-free nilpotent group with an involution $\ast\colon G\rightarrow
G$ and  $k$ be a field
of characteristic zero. Consider the group ring $k[G]$ and its Malcev-Neumann
division ring of fractions $k(G)$. Then there exist nonzero symmetric
elements $A,B\in k(G)$ such that the $k$-subalgebra 
generated by $\{A,A^{-1},B,B^{-1}\}$ is the free group $k$-algebra on the set $\{A,B\}$. 
\end{theo}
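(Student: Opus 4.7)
The plan is to reduce to the residually nilpotent Lie algebra case (Corollary~\ref{coro:symmetricresiduallynilpotent}) by constructing a filtration on $k[G]$ whose associated graded ring is isomorphic to the universal enveloping algebra of a residually nilpotent Lie $k$-algebra carrying an induced involution, and then lifting the free group generators across this correspondence.

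First, I would build a $*$-invariant $\mathcal{N}$-series. Set $H_i=\sqrt{\gamma_i(G)}$, the isolator of the $i$-th term of the lower central series. Each $\gamma_i(G)$ is a fully invariant subgroup, and since $(h,g)^{\ast}=(h^{\ast})^{-1}(g^{\ast})^{-1}(h^{\ast})(g^{\ast})\cdot(\text{conjugates})$ lies again in the corresponding term of the lower central series, $\gamma_i(G)$ is $\ast$-invariant; isolators of $\ast$-invariant subgroups are $\ast$-invariant because $(g^n)^{\ast}=(g^{\ast})^n$. Standard facts about isolator series (see \cite{Fuchs}) give $(H_i,H_j)\subseteq H_{i+j}$, each $G/H_i$ torsion-free nilpotent, and $\bigcap_i H_i=\{1\}$ from the residual hypothesis.

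Second, form $L=k\otimes_{\mathbb{Z}}L(G)$ with $L(G)=\bigoplus_{i\geq1}H_i/H_{i+1}$ as in Section~\ref{sec:Heisenberggroup}. The $\ast$-invariance of each $H_i$ induces a $k$-involution $\bar{\ast}$ on $L$. The ideals $N_n=\bigoplus_{i\geq n}L_i$ have nilpotent quotients and trivial intersection, so $L$ is residually nilpotent; and $L$ is nonabelian because if $u,v\in G$ satisfy $(v,u)\neq 1$, pick $p,q$ minimal with $u\in H_p$, $v\in H_q$ and the classes $\widetilde{u}\in L_p$, $\widetilde{v}\in L_q$ do not commute (their bracket is the nonzero class of $(v,u)$ at the appropriate level).

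Third, the Lichtman-Quillen isomorphism \eqref{eq:isomorphismQuillen} gives an isomorphism $\Theta\colon U(L)\to\gr_{F_{\mathbb{Z}}}(k[G])$ of $\mathbb{Z}$-graded $k$-algebras, which is compatible with the involutions because $\Theta(\widetilde{x}^{\bar{\ast}})=(x^{\ast}-1)+F_{\geq i+1}k[G]=\Theta(\widetilde{x})^{\ast}$ on generators. Since $U(L)$ is a domain, the filtration defines a valuation on $k[G]$, which extends to a valuation $\upsilon\colon k(G)\to\mathbb{Z}\cup\{\infty\}$, and by Lemma~\ref{lem:gradedOre} the graded ring $\gr_{\upsilon}(k(G))$ is isomorphic to $\mathcal{H}^{-1}U(L)$, which in turn embeds into $\mathfrak{D}(L)$ as an identification of $\mathbb{Z}$-graded $\ast$-algebras.

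Fourth, apply Corollary~\ref{coro:symmetricresiduallynilpotent} to $L$ with the involution $\bar{\ast}$: there exist two homogeneous elements $u,v\in L$ (of degrees $-p,-q$, in some basis of $L_p,L_q$) and corresponding expressions $S_1,T_i$ (with $i\in\{1,2\}$ depending on which case of Lemma~\ref{lem:equivalentinvolutionHeisenbergalgebra} applies to the induced involution on the relevant Heisenberg quotient) such that $\{1+S_1S_1^{\bar{\ast}},\,1+T_iT_i^{\bar{\ast}}\}$ freely generate a free group $k$-algebra in $\mathfrak{D}(L)$. Lift $u,v$ to elements $u_0,v_0\in H_p,H_q\subseteq G$, and form the corresponding expressions $\mathcal{S}_1,\mathcal{T}_i\in k(G)$ by substituting $u_0-1$ for $u$, $v_0-1$ for $v$, $u_0^{\ast}-1$ for $u^{\bar{\ast}}$ and $v_0^{\ast}-1$ for $v^{\bar{\ast}}$ inside the formulas from Theorem~\ref{theo:freegroupHeisenberg}. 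The denominators $V\pm\tfrac{1}{3}w$, $w\pm v^2$, $w^2\pm v^3$ pulled back to $k(G)$ have the same nonzero leading term in the graded ring (computed by $\Theta$) as their $\mathfrak{D}(L)$-counterparts, hence are invertible in $k(G)$ by the valuation criterion; consequently $A:=1+\mathcal{S}_1\mathcal{S}_1^{\ast}$ and $B:=1+\mathcal{T}_i\mathcal{T}_i^{\ast}$ are well-defined symmetric elements of $k(G)$ whose images under $\gr_{\upsilon}\hookrightarrow\mathfrak{D}(L)$ are $1+S_1S_1^{\bar{\ast}}$ and $1+T_iT_i^{\bar{\ast}}$.

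Finally, since $\upsilon(A-1)>0$ and $\upsilon(B-1)>0$ with leading terms $S_1S_1^{\bar{\ast}}$ and $T_iT_i^{\bar{\ast}}$ generating a free $k$-algebra in the graded ring (by Corollary~\ref{coro:symmetricresiduallynilpotent}), Theorem~\ref{coro:divisionrings} applied to $\upsilon\colon k(G)\to\mathbb{Z}\cup\{\infty\}$ with $X=\{A-1,B-1\}$ yields that $\{A,A^{-1},B,B^{-1}\}$ generates the free group $k$-algebra on $\{A,B\}$ inside $k(G)$.

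The main obstacle is bookkeeping: one must verify that every inversion performed inside the formulas defining $\mathcal{S}_1,\mathcal{T}_i$ is legal in $k(G)$ (equivalently, that the denominators have invertible images in $\gr_{\upsilon}(k(G))\cong\mathcal{H}^{-1}U(L)$) and that $\Theta$ truly intertwines the two involutions, so that the lifted elements remain symmetric and their leading terms genuinely match the free generators supplied by Corollary~\ref{coro:symmetricresiduallynilpotent}.
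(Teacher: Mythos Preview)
Your approach is genuinely different from the paper's and, as written, has a real gap.

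The paper does \emph{not} build a valuation on $k(G)$ for the whole residually torsion-free nilpotent group. Instead it invokes the reduction argument of \cite[Section~3]{FerreiraGoncalvesSanchezFreegroupssymmetric}: roughly, one passes to a torsion-free nilpotent quotient $G/N$ (with $N$ a $\ast$-invariant term of the isolator series), where $k[G/N]$ \emph{is} an Ore domain, applies Theorem~\ref{theo:symmetricnilpotentgroup} there, and then transports the resulting free group algebra back to $k(G)$. All the valuation/graded machinery of Section~\ref{sec:Heisenberggroup} is used only inside $k(\mathbb{H})$ and $k(G/N)$, where the Ore hypothesis of Lemma~\ref{lem:gradedOre} is available.

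Your route tries to run the graded argument directly on $k(G)$. The problem is the step ``which extends to a valuation $\upsilon\colon k(G)\to\mathbb{Z}\cup\{\infty\}$, and by Lemma~\ref{lem:gradedOre} the graded ring $\gr_{\upsilon}(k(G))$ is isomorphic to $\mathcal{H}^{-1}U(L)$''. Lemma~\ref{lem:gradedOre} assumes the base ring is an Ore domain; for a general residually torsion-free nilpotent $G$ (e.g.\ a nonabelian free group), $k[G]$ is not Ore, so neither the extension of $\upsilon$ to $k(G)$ nor the identification $\gr_\upsilon(k(G))\cong\mathcal{H}^{-1}U(L)$ follows from anything proved here. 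Without that identification you cannot conclude that the leading terms of your lifted elements $\mathcal{S}_1\mathcal{S}_1^\ast,\mathcal{T}_i\mathcal{T}_i^\ast$ land in (an isomorphic copy of) $\mathfrak{D}(L)$, and Theorem~\ref{coro:divisionrings} cannot be applied.

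There is a second, smaller issue: the symmetric generators produced by Corollary~\ref{coro:symmetricresiduallynilpotent} come from elements $u,v$ found via Proposition~\ref{prop:involutionnilpotent} inside a quotient of $L$; these need not be of the form $1\otimes\widetilde{g}$ for $g\in G$, so ``lift $u,v$ to elements $u_0,v_0\in H_p,H_q$'' is not automatic. In the paper this is handled by staying on the group side (Proposition~\ref{prop:InvHeis} produces a $\ast$-invariant Heisenberg \emph{subgroup}), which is why Proposition~\ref{prop:symmetricHeisenberggroup} works with $a-1,b-1,c-1$ directly.

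If you can supply an independent argument that the canonical filtration on $k[G]$ extends to a valuation on the Malcev--Neumann division ring $k(G)$ with $\gr_\upsilon(k(G))$ a graded division ring containing $U(L)$, your strategy would go through and would in fact be a nice one-step alternative to the paper's two-stage reduction. But that extension is not available from Lemma~\ref{lem:gradedOre}, and nothing else in the paper provides it.
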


\begin{proof}
As noted in \cite[Section~3]{FerreiraGoncalvesSanchezFreegroupssymmetric}, the argument used there
can also be used to prove the existence of  free group algebras generated symmetric elements in $k(G)$ using 
the existence of free group algebras generated by symmetric elements in Ore ring of fractions
$k(L)$ where $L$ is a torsion-free nilpotent group. This fact has already been proved in
Theorem~\ref{theo:symmetricnilpotentgroup}.
\end{proof}

\bibliographystyle{amsplain}
\bibliography{grupitosbuenos}

\end{document}